\numberwithin{equation}{section}
\newtheorem{theorem}{Theorem}[section]
\newtheorem{lemma}{Lemma}[section]
\newtheorem{corollary}{Corollary}[section]
\newtheorem{remark}{Remark}[section]
\newenvironment{proof}[1][Proof]{\begin{trivlist}
\item[\hskip \labelsep {\bfseries #1}]}{\end{trivlist}}
\begin{document}
\title {Generalized Hilbert Operator Acting on Weighted Bergman Spaces and  on Dirichlet Spaces\footnote{ The research was supported by the National Natural Science Foundation of China (Grant No. 11671357)}}
\author{Shanli Ye\footnote{Corresponding author. E-mail address:  slye@zust.edu.cn}
\quad\quad Guanghao Feng\footnote{E-mail address:gh945917454@foxmail.com}     \\
(\small \it School of Sciences, Zhejiang University of Science and Technology, Hangzhou
310023, China$)$}

 \date{}
\maketitle
\begin{abstract}

Let $\mu$ be a positive Borel measure on the interval [0,1). For $\beta > 0$, The generalized Hankel matrix
$\mathcal{H}_{\mu,\beta}= (\mu_{n,k,\beta})_{n,k\geq0}$ with entries $\mu_{n,k,\beta}= \int_{[0.1)}\frac{\Gamma(n+\beta)}{n!\Gamma(\beta)} t^{n+k}d\mu(t)$,  induces formally the operator
$$\mathcal{H}_{\mu,\beta}(f)(z)=\sum_{n=0}^\infty \left(\sum_{k=0}^\infty \mu_{n,k,\beta}a_k\right)z^n$$
on the space of all analytic function $f(z)=\sum_{k=0}^ \infty a_k z^n$   in the unit disc $\mathbb{D}$. In this paper, we characterize those positive Borel  measures on $[0,1)$ such that
$\mathcal{H}_{\mu,\beta}(f)(z)= \int_{[0,1)} \frac{f(t)}{{(1-tz)^\beta}} d\mu(t)$ for all in weighted Bergman Spaces $A_{\alpha}^p(0<p<\infty,\; \alpha>-1)$, and among them we describe those for which $\mathcal{H}_{\mu,\beta}(\beta>0)$ is a bounded(resp.,compact) operator on weighted Bergman spaces and Dirichlet spaces.
\\
{\small\bf Keywords}\quad {Generalized Hilbert operator, Weighted Bergman Spaces, Dirichlet spaces, Carleson measure
 \\
    {\small\bf 2020 MR Subject Classification }\quad 47B35, 30H20, 30H30\\}
\end{abstract}
\maketitle

\section{Introduction}\label{s1}

Let $\mu$ be a positive Borel measure on the interval [0,1). for $\beta > 0$, we define $\mathcal{H}_{\mu,\beta}= (\mu_{n,k,\beta})_{n,k\geq0}$ to be generalized Hankel matrix with entries $\mu_{n,k,\beta}= \int_{[0.1)}\frac{\Gamma(n+\beta)}{n!\Gamma(\beta)} t^{n+k}d\mu(t).$

For analytic functions $f(z)=\sum_{k=0}^ \infty a_k z^n$ , the generalized Hilbert operator define as
\begin{align}\label{eq1.1}
\mathcal{H}_{\mu,\beta}(f)(z)=\sum_{n=0}^\infty \left(\sum_{k=0}^\infty \mu_{n,k,\beta}a_k\right)z^n,
\end{align}
whenever the right hand side makes sense and defines an analytic function in $\mathbb{D}$.

In recent decades, the generalized Hilbert operator $\mathcal{H}_{\mu,1}$ which induced by the Hankel matrix $\mathcal{H}_\mu$ have been studied extensively. For example, Galanopoulos and Pel\'{a}ez\cite{GHM} characterized the Borel measure $\mu$ for which the Hankel operator is a bounded (resp.,compact) operator on $H^1$. Then Chatzifountas, Girela and Pel\'{a}ez \cite{CGH} extended this work the all Hardy spaces $H^p$ with $0<p< \infty$. In \cite{GGHO}, Girela and Merch\'{a}n studied the operator acting on certain conformally invariant spaces of analytic functions on the disk. For the case $ \beta = 2$, it had been studied the operator in \cite{YDBL,YDBE} and the $\mathcal{H}_{\mu,2}$ was called the Derivative-Hilbert operator. In \cite{YGH}, the $\mathcal{H}_{\mu,\beta}$ was called the generalized Hilbert operator.

Another generalized Hilbert-integral operator related to $\mathcal{H}_{\mu,\beta}$ denoted by
$\mathcal{I}_{\mu,\beta} (\beta\in R^+$) is defined by
$$ \mathcal{I}_{\mu,\beta}(f)(z)=\int_{[0,1)} \frac{f(t)}{{(1-tz)^\beta}} d\mu(t),$$
whenever the right hand side makes sense and defines an analytic function in $\mathbb{D}$.

In this paper, we consider the operators
\begin{align}
   & \mathcal{H}_{\mu,\beta}: A_\alpha^p\rightarrow A_{\beta-2}^q,  \quad 0<p\leq q <\infty \; and \; q\geq1, \; \alpha>-1 \;and \;\beta>1.\notag \\
   & \mathcal{H}_{\mu,\beta}: \mathcal{D}_\alpha^p\rightarrow \mathcal{D}_{\beta-1}^q, \quad  1<p\leq q <\infty, \alpha>-1 \;and \;\beta>0.\notag\notag
\end{align}
The aim is to study the boundedness (resp.,compactness) of $\mathcal{H}_{\mu,\beta}$.

In this article we characterize the positive Borel measures $\mu$ for which the operator which $ \mathcal{I}_{\mu,\beta}$ and $\mathcal{H}_{\mu,\beta}$ is well defined in the weighted Bergman space
 $A_\alpha^p$. Then we give the necessary and sufficient conditions  such that operator $\mathcal{H}_{\mu,\beta}$ is bounded (resp.,compact) from the weighted Bergman spaces $A_\alpha^p(0<p<\infty, \; \alpha>-1)$ into the space $A_{\beta-2}^q (\beta>1,\;q \geq 1$ and $p\leq q <\infty )$, and we also give necessary and sufficient conditions such that operator $\mathcal{H}_{\mu,\beta}$ is bounded (resp.,compact) from the Dirichlet type space $\mathcal{D}_\alpha^p(\alpha>p-1,\; 0<p<\infty)$ into the space $\mathcal{D}_{\beta-1}^q(\beta>q,\; p\leq q<\infty$ and $q > 1)$, and get a series of corollaries.

\section{Notation and Preliminaries}\label{s2}

Let $\mathbb{D}$ denote the open unit disk of the complex plane, and let $H(\mathbb{D})$ denote the set of all analytic functions in $\mathbb{D}$.

For $0 < p < \infty $ and $\alpha > -1$ the weighted Bergman space $A_\alpha^p$ consists of those
$f \in H(\mathbb{D})$ such that
$$\|f\|_{A_\alpha^p}\overset{def}{=} \left((\alpha+1)\int_\mathbb{D}|f(z)|^p (1-|z|^2)^\alpha dA(z) \right)^{1/p}< \infty.$$
Here, $dA$ stands for the area measure on $\mathbb{D}$, normalized so that the total area of $\mathbb{D}$ is 1. Thus $dA(z) = \frac{1}{\pi}dxdy = \frac{1}{\pi}rdrd\theta$. The unweighted Bergman space $A_0^p$ is simply denoted by $A^p$. We refer to \cite{DBS,ZOT} for the notation and results about Bergman spaces.

The space of Dirichlet type $\mathcal{D}_\alpha^p$ ($0 < p <\infty $ and $\alpha > -1$) consists of those $f \in H(\mathbb{D})$ such that $f' \in A_\alpha^p$. In other words, a function $f \in H(\mathbb{D})$ belongs to $\mathcal{D}_\alpha^p$ if and only if
$$\|f\|_{\mathcal{D}_\alpha^p}\overset{def}{=}|f(0)|+\left((\alpha+1)\int_\mathbb{D}|f'(z)|^p (1-|z|^2)^\alpha dA(z) \right)^{1/p}< \infty.$$
Finally, we recall that a function $f\in H(\mathbb{D})$ is said to be a Bloch function if
$$\|f\|_\mathcal{B}\overset{def}{=} |f(0)|+\sup_{z\in \mathbb{D}}(1-|z|^2)|f'(z)|< \infty.$$
The space of all Bloch functions is denoted by $\mathcal{B}$. A classical reference for the theory of Bloch functions is \cite{GOB,POB}. The little Bloch space $\mathcal{B}_0$ is the subspace of the Bloch space consisting of those functions $f \in H(\mathbb{D})$ such that.

$$\lim_{|z|\rightarrow0}(1-|z|^2)|f'(z)|=0.$$

Let us recall the knowledge of Carleson measure, which is a very useful tool in the study of Banach spaces of analytic functions. For $0<s<\infty$, a positive Borel measure $\mu$ on $\mathbb{D}$ will be called a $s$-Carleson measure, if there exists a positive constant $C$ such that
$$\mu(S(I))\leq C|I|^s .$$
The Carleson square $S(I)$ is defined as
$$S(I)=\{z=re^{i\theta}:e^{i\theta}\in I;1-\frac{|I|}{2\pi}\leq r \leq 1\}.$$
where $I$ is an interval of $\partial \mathbb{D}$, $|I|$ denotes the length of $I$, if $\mu $ satisfies $\lim_{|I|\rightarrow 0} \frac{\mu(S(I)) }{|I|^s}=0$, we call $\mu$ is a vanishing $s$-Carleson measure.

Let $\mu$ be a positive Borel measure on $\mathbb{D}$. For $0\leq \alpha < \infty$ and $ 0<s< \infty $ we say that $\mu$ is $\alpha$-logarithmic $s$-Carleson measure, if there exists a positive constant $C$ such that
 $$\frac{\mu(S(I))(\log\frac{ 2\pi }{|I|})^\alpha}{|I|^s} \leq C \quad  \quad I \subset \partial \mathbb{D}.$$
If $\mu(S(I))(\log\frac{ 2\pi }{|I|})^\alpha=o(|I|^s)$, as  $|I|\rightarrow 0$, we say that $\mu$ is vanishing $\alpha$-logarithmic $s$-Carleson measure\cite{MVL,ZOL}.

Suppose that $\mu$ is a $s$-Carleson measure on $\mathbb{D}$, we see that the identity mapping $i$ is well defined from $A_\alpha^p$ into $L^q(\mathbb{D},\mu).$ Let $\mathcal{N}(\mu)$ be the norm of $i$. For $0<r<1$, let
$$d\mu_r(z)=\chi_{r<|z|<1}(t)d\mu(t).$$
Then $\mu$ is a vanishing $s$-Carleson measure if and only if
\begin{align}\label{eq2.1}
  \mathcal{N}(\mu_r) \rightarrow 0 \quad as  \quad r\rightarrow 1^-.
\end{align}

A positive Borel measure on $[0,1)$ also can be seen as a Borel measure on $\mathbb{D}$ by identifying it with the measure $\mu$ defined by
$$\tilde{\mu}(E)=\mu(E\bigcap [0,1)).$$
for any Borel subset E of $\mathbb{D}$. Then a positive Borel measure $\mu$ on $[0,1)$ can be seen as an $s$-Carleson measure on $\mathbb{D}$, if
$$\mu([t,1)) \leq C(1-t)^s,  \quad 0 \leq t<1 .$$
Also, we have similar statements for vanishing $s$-Carleson measures, $\alpha$-logarithmic
$s$-Carleson and vanishing $\alpha$-logarithmic $s$-Carleson measures.

As usual, throughout this paper, $C$ denotes a positive constant which depends only on the displayed parameters but not necessarily the same from one occurrence to the next. For any given $p > 1$, $p'$ will denote the conjugate index of $p$, that is, $1/p + 1/p' = 1$.
\section{Conditions such that $\mathcal{H}_{\mu,\beta}$ is well defined on Weighted Bergman spaces}\label{s3}
In this section, we find the sufficient condition such that $\mathcal{H}_{\mu,\beta}(\beta>0)$ are well defined in $A^p_\alpha(0<p< \infty,\; \alpha>-1)$, and obtain that $\mathcal{H}_{\mu,\beta}(f)=\mathcal{I}_{\mu_,\beta}(f)$, for all $f\in A^p_\alpha$, with the certain condition.

We first recall a result about the coefficients of functions in weighted Bergman spaces.
\begin{lemma}\cite[~p.~191]{JTC} \label{le3.1}
Let $f \in H(\mathbb{D})$, $f(z)=\sum_{n=1}^\infty a_n z^n$ and let $-1< \alpha < \infty$.
\begin{itemize}
		\item[$(\romannumeral1)$] If $0<p\leq 1$, and $f \in A_\alpha^p$, then $a_n=o(n^{(\alpha+2)/p-1})$.
		\item[$(\romannumeral2)$] If $1\leq p<\infty$ and $f \in A_\alpha^p$, then $a_n=o(n^{(\alpha+1)/p})$.
        \item[$(\romannumeral3)$] If $0<p\leq 2$, then
		$$\sum_{n=1}^{\infty}n^{-\alpha-1}|a_n|^p<\infty\Rightarrow f\in A_\alpha^p\Rightarrow \sum_{n=1}^{\infty}n^{p-\alpha-3}|a_n|^p<\infty.$$
        \item[$(\romannumeral4)$] If $2\leq p<\infty$, then
		$$\sum_{n=1}^{\infty}n^{p-\alpha-3}|a_n|^p<\infty\Rightarrow f\in A_\alpha^p \Rightarrow
		\sum_{n=1}^{\infty}n^{-\alpha-1}|a_n|^p<\infty.$$
	\end{itemize}
\end{lemma}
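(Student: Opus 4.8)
\quad This lemma collects several classical facts about Taylor coefficients of functions in weighted Bergman spaces; the plan is to prove the growth estimates $(\romannumeral1)$--$(\romannumeral2)$ directly from integral-means bounds and to reduce the sequence-space inclusions $(\romannumeral3)$--$(\romannumeral4)$ to the corresponding coefficient theorems on Hardy spaces. For $(\romannumeral2)$ I would start from the standard integral-means estimate: writing $M_p(r,f)=\bigl(\tfrac{1}{2\pi}\int_0^{2\pi}|f(re^{i\theta})|^p\,d\theta\bigr)^{1/p}$, the function $r\mapsto M_p^p(r,f)$ is nondecreasing, so the identity $\|f\|_{A_\alpha^p}^p=2(\alpha+1)\int_0^1 M_p^p(r,f)\,r(1-r^2)^\alpha\,dr$ forces $M_p(r,f)\le C\|f\|_{A_\alpha^p}(1-r)^{-(\alpha+1)/p}$. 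Since $p\ge1$, combining this with $|a_n|\le r^{-n}M_1(r,f)\le r^{-n}M_p(r,f)$ and choosing $r=1-1/n$ gives $|a_n|\le C\|f\|_{A_\alpha^p}\,n^{(\alpha+1)/p}$; to pass from $O$ to $o$, given $\varepsilon>0$ pick a polynomial $P$ with $\|f-P\|_{A_\alpha^p}<\varepsilon$ (polynomials are dense, e.g.\ via dilations), so that $a_n(f)=a_n(f-P)$ and $|a_n(f)|\le C\varepsilon\,n^{(\alpha+1)/p}$ for $n>\deg P$.

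For $(\romannumeral1)$ the crude $M_\infty$ bound loses exactly one power of $n$, so I would instead route through the ``critical'' embedding $A_\alpha^p\hookrightarrow A_\beta^1$ with $\beta:=\tfrac{\alpha+2}{p}-2$ (which is $>-1$ because $\alpha+2>1\ge p$). This embedding is elementary: the subharmonicity of $|f|^p$ and a sub-mean-value inequality over $D\bigl(z,\tfrac{1-|z|}{2}\bigr)$, on which $(1-|w|^2)^\alpha\asymp(1-|z|^2)^\alpha$, give $|f(z)|\le C\|f\|_{A_\alpha^p}(1-|z|^2)^{-(\alpha+2)/p}$; writing $|f|=|f|^{1-p}|f|^{p}$ and integrating against $(1-|z|^2)^\beta$ then yields $\|f\|_{A_\beta^1}\le C\|f\|_{A_\alpha^p}$, the value of $\beta$ being exactly the one that makes the leftover power of $(1-|z|^2)$ equal to $\alpha$. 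Once $f\in A_\beta^1$, part $(\romannumeral2)$ at $p=1$ gives $a_n=O(n^{\beta+1})=O\bigl(n^{(\alpha+2)/p-1}\bigr)$, and the same density argument promotes this to $o$.

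For $(\romannumeral3)$--$(\romannumeral4)$ the pivot is $\|z^n\|_{A_\alpha^2}^2=\tfrac{\Gamma(\alpha+2)\,n!}{\Gamma(n+\alpha+2)}\asymp n^{-\alpha-1}$, which by orthogonality of the monomials gives the sharp $p=2$ identity $\|f\|_{A_\alpha^2}^2\asymp\sum_n(n+1)^{-\alpha-1}|a_n|^2$ (note $n^{p-\alpha-3}=n^{-\alpha-1}$ when $p=2$, so both chains collapse to this). For $0<p\le1$, subadditivity of $t\mapsto t^p$ gives $\|f\|_{A_\alpha^p}^p\le\sum_n|a_n|^p\|z^n\|_{A_\alpha^p}^p$ with $\|z^n\|_{A_\alpha^p}^p\asymp n^{-\alpha-1}$, which is the first implication of $(\romannumeral3)$. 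For the remaining implications I would invoke the Hardy--Littlewood--Flett correspondence between $A_\alpha^p$ and $H^p$ under fractional integration of order $(\alpha+1)/p$: letting $g$ be this fractional integral of $f$, one has $f\in A_\alpha^p\iff g\in H^p$ and $\widehat g(n)\asymp(n+1)^{-(\alpha+1)/p}a_n$, so the weighted sums in $(\romannumeral3)$--$(\romannumeral4)$ become $\sum_n|\widehat g(n)|^p$ and $\sum_n(n+1)^{p-2}|\widehat g(n)|^p$. The four implications then reduce to standard Hardy-space facts: the Hardy--Littlewood coefficient inequalities ($g\in H^p\Rightarrow\sum_n(n+1)^{p-2}|\widehat g(n)|^p<\infty$ for $0<p\le2$, and its converse for $p\ge2$), the Hausdorff--Young inequality (for $1<p<2$), and the trivial inclusions $\ell^p\subset\ell^1$ ($p\le1$) and $\ell^2\subset\ell^p$ ($p\ge2$).

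The main obstacle is the intermediate range $1<p<2$ (and, symmetrically, the stubborn loss of one power of $n$ in $(\romannumeral1)$): the elementary integral-means, subharmonicity and subadditivity arguments only reach the endpoints $p\in\{1,2\}$, and bridging the gap requires genuine machinery --- the fractional-integration theorem, or a complex-interpolation argument establishing $[A_\alpha^1,A_\alpha^2]_\theta=A_\alpha^p$ and carefully tracking the coefficient weights --- which is where the real work of the proof resides.
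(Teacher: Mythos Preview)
The paper does not prove this lemma: it is quoted from \cite[p.~191]{JTC} and used as a black box, so there is no argument in the paper to compare yours against.

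For what it is worth, your sketch is correct and follows the standard treatment one finds in that reference. The $M_p$-bound plus polynomial density for $(\romannumeral2)$, the embedding $A_\alpha^p\hookrightarrow A_{(\alpha+2)/p-2}^{1}$ to recover the sharp exponent in $(\romannumeral1)$, and the reduction of $(\romannumeral3)$--$(\romannumeral4)$ to the Hardy--Littlewood and Hausdorff--Young coefficient theorems on $H^p$ via the Flett correspondence $f\in H^p\Longleftrightarrow D^{(\alpha+1)/p}f\in A_\alpha^p$ are exactly the classical arguments. Your diagnosis of where the real content lies --- the range $1<p<2$ and the fractional-integration machinery rather than elementary estimates --- is also accurate.
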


\begin{lemma}\label{le3.2}
For $0<p\leq q<\infty$ and $\alpha>-1$, $\mu$ is a $\left(\frac{(2+\alpha) q}{p}\right)$-Carleson measure if and only if there exists a positive constant $C$ such that
	\begin{align}\label{eq3.1}	
		\left\{\int_{\mathbb{D}}|f(z)|^qd\mu(z)\right\}^{1/q}\leq C\|f\|_{A_\alpha^p},\quad for~all~f\in A_\alpha^p.
	\end{align}
\end{lemma}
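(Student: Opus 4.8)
The plan is to prove the equivalence by a standard Carleson-measure embedding argument, exploiting the well-known fact that for $0<p\leq q<\infty$ a positive Borel measure $\nu$ on $\mathbb D$ satisfies $\left(\int_{\mathbb D}|g|^q\,d\nu\right)^{1/q}\leq C\|g\|_{A^p}$ for all $g\in A^p$ precisely when $\nu$ is a $(q/p)$-Carleson measure. To reduce the weighted case to the unweighted one, I would use the isometric-type identification $f\in A^p_\alpha$ iff $f(z)(1-|z|^2)^{(\alpha+?)/?}\dots$ — more concretely, I would pass through the change of variables that sends $A^p_\alpha$ to $A^p$ via an appropriate power weight, or simply invoke the characterization directly in weighted form (it is classical, e.g.\ in \cite{ZOT}): for $0<p\le q<\infty$ and $\alpha>-1$, the embedding $A^p_\alpha\hookrightarrow L^q(\mathbb D,\mu)$ holds iff $\mu$ is an $s$-Carleson measure with $s=\frac{(2+\alpha)q}{p}$. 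So in a sense the statement of Lemma \ref{le3.2} is exactly this classical theorem, and the real content of the proof is to supply a self-contained argument.

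For the direction ``$(q/p)\cdot(2+\alpha)$-Carleson $\Rightarrow$ embedding'': First I would record the pointwise growth estimate $|f(z)|\leq C\|f\|_{A^p_\alpha}(1-|z|^2)^{-(2+\alpha)/p}$ valid for all $f\in A^p_\alpha$, which follows from subharmonicity of $|f|^p$ and the area estimate on a Bergman ball. When $p=q$ one can then conclude directly by a Fubini/Carleson-box decomposition argument: cover $\mathbb D$ by dyadic Carleson boxes and sum $\mu(S(I))$ against the appropriate averages of $|f|^p$; this is the textbook proof. For $p<q$ I would interpolate: write $|f(z)|^q=|f(z)|^{q-p}\,|f(z)|^p$, estimate the first factor using the pointwise bound to pull out $(1-|z|^2)^{-(2+\alpha)(q-p)/p}$, and reduce to showing $\int_{\mathbb D}|f|^p(1-|z|^2)^{-(2+\alpha)(q-p)/p}\,d\mu(z)\leq C\|f\|_{A^p_\alpha}^p$, which is the $p=p$ case of the embedding against the measure $(1-|z|^2)^{-(2+\alpha)(q-p)/p}d\mu$; one checks this new measure is a $(2+\alpha)$-Carleson measure exactly because $\mu$ was a $\frac{(2+\alpha)q}{p}$-Carleson measure (a one-line computation on Carleson boxes, since $(1-|z|^2)\sim|I|$ on $S(I)$).

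For the converse ``embedding $\Rightarrow$ Carleson'': I would plug the standard family of test functions
$$f_a(z)=\frac{(1-|a|^2)^{b}}{(1-\bar a z)^{(2+\alpha)/p+b}},\qquad a\in\mathbb D,$$
for a suitable fixed $b>0$, which satisfy $\|f_a\|_{A^p_\alpha}\asymp 1$ uniformly in $a$, while on the Carleson box $S(I)$ with $a=(1-|I|)e^{i\theta_I}$ one has $|f_a(z)|\gtrsim |I|^{-(2+\alpha)/p}$. Feeding $f_a$ into \eqref{eq3.1} and discarding all of $\mathbb D$ except $S(I)$ yields $\mu(S(I))^{1/q}|I|^{-(2+\alpha)/p}\lesssim C$, i.e.\ $\mu(S(I))\lesssim |I|^{(2+\alpha)q/p}$, which is exactly the $\frac{(2+\alpha)q}{p}$-Carleson condition.

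The main obstacle is the sufficiency direction when $p<q$: the interpolation step must be executed carefully so that the auxiliary measure $(1-|z|^2)^{-(2+\alpha)(q-p)/p}d\mu$ is genuinely seen to be a $(2+\alpha)$-Carleson measure, and one must make sure the exponent bookkeeping ($\frac{(2+\alpha)q}{p}-\frac{(2+\alpha)(q-p)}{p}=2+\alpha$) is clean; everything else is routine once the pointwise growth estimate and the test functions $f_a$ are in hand.
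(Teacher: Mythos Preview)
The paper does not actually prove Lemma~\ref{le3.2}: immediately after the statement it simply writes ``The result was proved by several authors which can be found in \cite{HCM} and \cite{LFR}'' and moves on. So there is no argument in the paper to compare against; your proposal already goes well beyond what the authors supply, and your test-function argument for necessity and the pointwise growth estimate are both standard and correct.

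That said, your sufficiency argument for $p<q$ contains a genuine gap. You claim that the auxiliary measure $d\nu=(1-|z|^2)^{-(2+\alpha)(q-p)/p}\,d\mu$ is a $(2+\alpha)$-Carleson measure by a ``one-line computation on Carleson boxes, since $(1-|z|^2)\sim|I|$ on $S(I)$''. This equivalence is false: on $S(I)$ one only has $1-|z|^2\lesssim |I|$, while $1-|z|^2$ can be arbitrarily small near $\partial\mathbb D$, so the negative-power weight $(1-|z|^2)^{-(2+\alpha)(q-p)/p}$ is unbounded on $S(I)$ and you cannot just pull it out. The conclusion is nevertheless true, but it needs an honest dyadic decomposition of $S(I)$ into top-halves of nested Carleson boxes: at scale $2^{-k}|I|$ the weight is comparable to $(2^{-k}|I|)^{-(2+\alpha)(q-p)/p}$, there are $2^k$ such pieces, and the $\mu$-mass of each is at most $C(2^{-k}|I|)^{(2+\alpha)q/p}$; summing gives $C|I|^{2+\alpha}\sum_{k\ge0}2^{-k(1+\alpha)}$, which converges precisely because $\alpha>-1$. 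Once you insert this computation (or, as the paper does, simply cite Hastings and Luecking), the rest of your outline is sound.
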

The result was proved by several authors which can be found in \cite{HCM} and \cite{LFR},

\begin{theorem}\label{Th3.1}
Suppose that $0 < p < \infty$ and $-1<\alpha<\infty $ and let $\mu$ be a positive Borel measure on $[0,1)$. Then the power series in (\ref{eq1.1}) defines a well defined analytic function in $\mathbb{D}$ for every $f\in A_\alpha^p$ in any of the three following cases.
	\begin{itemize}
		\item[$(\romannumeral1)$] $\mu$ is a $(\frac{\alpha+2}{p})$-Carleson measure if $0<p\leq 1$.
		\item[$(\romannumeral2)$] $\mu$ is a $\left(\frac{\alpha+2-(p-1)^2}{p}\right)$-Carleson measure if $1\leq p\leq 2$.
		\item[$(\romannumeral3)$] $\mu$ is a $(\frac{\alpha+1}{p})$-Carleson measure if $2\leq p<\infty$.
	\end{itemize}
Furthermore, in such cases we have that
\begin{align}\label{eq3.2} \mathcal{H}_{\mu,\beta}(f)(z)=\int_{[0,1)}\frac{f(t)}{(1-tz)^{\beta}}d\mu(t)
=\mathcal{I}_{\mu,\beta}(f)(z).
\end{align}
\end{theorem}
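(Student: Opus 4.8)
The plan is to show that under each Carleson condition on $\mu$, the coefficient sums $\sum_{k=0}^\infty \mu_{n,k,\beta}a_k$ converge absolutely and grow at most polynomially in $n$, so that the series \eqref{eq1.1} defines an analytic function on $\mathbb D$; and then to identify that function with the integral $\mathcal I_{\mu,\beta}(f)$ by a Fubini-type interchange. First I would estimate $|\mu_{n,k,\beta}|$. Writing $\mu_{n,k,\beta}=\frac{\Gamma(n+\beta)}{n!\Gamma(\beta)}\int_{[0,1)}t^{n+k}\,d\mu(t)$, the prefactor is comparable to $n^{\beta-1}$ for large $n$, so the key quantity is the moment $\mu_{n+k}:=\int_{[0,1)}t^{n+k}\,d\mu(t)$. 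The standard estimate for moments of an $s$-Carleson measure on $[0,1)$ (coming from $\mu([t,1))\le C(1-t)^s$ via integration by parts or dyadic decomposition of $[0,1)$) gives $\mu_m \le C\, m^{-s}$. Combined with the growth rate of the Taylor coefficients of $f\in A^p_\alpha$ supplied by Lemma~\ref{le3.1}, this should make $\sum_{k}|\mu_{n,k,\beta}||a_k|$ finite for every $n$ with a bound of the form $C\,n^{\beta-1-s}\|f\|_{A^p_\alpha}\cdot(\text{something summable or slowly growing in }k)$, hence the radius of convergence of \eqref{eq1.1} is at least $1$.

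Concretely, for case $(\romannumeral1)$ one has $s=\frac{\alpha+2}{p}$ and $|a_k|=o(k^{(\alpha+2)/p-1})=o(k^{s-1})$, so $|\mu_{n,k,\beta}||a_k|\lesssim n^{\beta-1}(n+k)^{-s}k^{s-1}$; the sum over $k$ converges because $\sum_k (n+k)^{-s}k^{s-1}\lesssim \sum_k k^{-1}\cdot(\text{tail})$ — more carefully one splits at $k=n$ and uses that $\sum_{k\le n}k^{s-1}\lesssim n^{s}$ while $\sum_{k>n}k^{-1}$ must instead be handled by keeping the full factor $(n+k)^{-s}k^{s-1}$, which is integrable at infinity for any $s>0$ since $k^{-s}k^{s-1}=k^{-1}$ is \emph{not}, so in fact one keeps $(n+k)^{-s}$ and bounds $\sum_{k>n}(n+k)^{-s}k^{s-1}\lesssim n^{-s}\sum_{k>n}k^{s-1-s}\cdot$(no) — I would instead use the sharper moment/coefficient pairing: since $f\in A^p_\alpha$ the partial sums of $|a_k|$ are controlled, and Abel summation against the decaying moments closes the estimate. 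Cases $(\romannumeral2)$ and $(\romannumeral3)$ are analogous, using parts $(\romannumeral2)$ of Lemma~\ref{le3.1} (for $1\le p$, $|a_k|=o(k^{(\alpha+1)/p})$) and the exponent $\frac{\alpha+2-(p-1)^2}{p}$ resp. $\frac{\alpha+1}{p}$; the quadratic term $(p-1)^2$ in case $(\romannumeral2)$ is precisely what is needed for the interpolation-flavoured estimate to balance, and I expect one uses a Hölder argument between the $p=1$ and $p=2$ endpoints (or a direct application of Lemma~\ref{le3.1}$(\romannumeral3)$) there.

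For the identity \eqref{eq3.2}, I would expand $\frac{1}{(1-tz)^\beta}=\sum_{n=0}^\infty \frac{\Gamma(n+\beta)}{n!\Gamma(\beta)}(tz)^n$ for fixed $z\in\mathbb D$ and $t\in[0,1)$, multiply by $f(t)=\sum_k a_k t^k$, and integrate $d\mu(t)$; interchanging the two sums and the integral yields exactly $\sum_n\left(\sum_k \mu_{n,k,\beta}a_k\right)z^n=\mathcal H_{\mu,\beta}(f)(z)$. The interchange is justified by the absolute convergence established in the first part: one checks $\sum_{n,k}\frac{\Gamma(n+\beta)}{n!\Gamma(\beta)}|z|^n |a_k|\int_{[0,1)}t^{n+k}\,d\mu(t)<\infty$ for each fixed $|z|<1$, which follows from the same moment and coefficient estimates together with the extra convergence-forcing factor $|z|^n$. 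The main obstacle is the first part — getting the right pairing of the Carleson-measure moment decay $m^{-s}$ against the (only $o(\cdot)$, not summable) coefficient growth from Lemma~\ref{le3.1}, and in particular seeing why the three exponents $\frac{\alpha+2}{p}$, $\frac{\alpha+2-(p-1)^2}{p}$, $\frac{\alpha+1}{p}$ are the natural thresholds in the three ranges of $p$; once the absolute convergence is in hand, analyticity and the Fubini interchange are routine.
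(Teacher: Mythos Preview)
Your overall architecture (moment bounds on $\mu_m$, coefficient bounds from Lemma~\ref{le3.1}, then Fubini for the identity~\eqref{eq3.2}) matches the paper's, but the core estimate in case~$(\romannumeral1)$ has a real gap that you yourself flag and do not close. Pairing the pointwise bound $|a_k|=o(k^{s-1})$ against $\mu_{n+k}\lesssim (n+k)^{-s}$ with $s=\frac{\alpha+2}{p}$ gives, as you compute, a tail like $\sum_k k^{-1}$, which diverges; and the Abel-summation rescue you suggest does not help either, since $\mu_m-\mu_{m+1}=\int t^m(1-t)\,d\mu(t)\lesssim m^{-(s+1)}$ while $\sum_{j\le k}|a_j|\lesssim k^{s}$, so one lands on the same borderline $\sum_k k^{-1}$. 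The pointwise $o(\cdot)$ information from Lemma~\ref{le3.1}$(\romannumeral1)$--$(\romannumeral2)$ alone is genuinely too weak here.

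What the paper does instead is use the \emph{summability} statements in Lemma~\ref{le3.1}$(\romannumeral3)$--$(\romannumeral4)$, namely $\sum_k (k+1)^{p-\alpha-3}|a_k|^p<\infty$, together with the algebraic splitting $|a_k|=|a_k|^{p}\cdot|a_k|^{1-p}$. One bounds the moment crudely by $\mu_{n+k}\le C(k+1)^{-s}$ (dropping $n$), applies the pointwise growth estimate only to the factor $|a_k|^{1-p}$, and is left with exactly $\sum_k(k+1)^{p-\alpha-3}|a_k|^p$, which converges. The same device handles cases~$(\romannumeral2)$ and~$(\romannumeral3)$; the curious exponent $\frac{\alpha+2-(p-1)^2}{p}$ in~$(\romannumeral2)$ is precisely what makes the powers line up after this splitting when one uses $|a_k|=o(k^{(\alpha+1)/p})$ from Lemma~\ref{le3.1}$(\romannumeral2)$. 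Once you have this absolute bound, your Fubini argument for~\eqref{eq3.2} goes through as written; the paper also observes, via Lemma~\ref{le3.2}, that the Carleson hypothesis directly gives $\int_{[0,1)}|f(t)|\,d\mu(t)\le C\|f\|_{A^p_\alpha}$, which makes the integral side $\mathcal I_{\mu,\beta}(f)$ well defined before one ever looks at coefficients.
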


\begin{proof}
(\romannumeral1) Suppose that $0<p \leq 1$ and $\mu$ is a $(\frac{\alpha+2}{p})$-Carleson measure. Then (\ref{eq3.1}) gives
$$\int_{[0,1)}|f(t)|d\mu(t)\leq C\|f\|_{A_\alpha^p},\quad for~all~f\in A_\alpha^p.$$

Fix $f(z) = \sum _{k=0}^ \infty a_k z^k \in A_\alpha^p$ and $z$ with $|z|<r$, $0 <r< 1$. It follows that

\begin{align}
\int_{[0,1)} \frac{|f(t)|}{{|1-tz|^\beta}} d\mu(t) & \leq \frac{1}{(1-r)^\beta} \int_{[0,1)}|f(t)|d\mu (t)    \notag \\
& \leq C \frac{1}{(1-r)^\beta}||f||_{A_\alpha^p} .  \notag
\end{align}

This implies that the integral $\int_{[0,1)} \frac{f(t)}{{(1-tz)^\beta}} d\mu(t)$ uniformly converges, and that
\begin{align}\label{eq3.3}
 \mathcal{I}_{\mu_,\beta}(f)(z)= &\int_{[0,1)} \frac{f(t)}{{(1-tz)^\beta}} d\mu(t) \notag \\
  =& \sum_{n=0}^\infty\frac{\Gamma(n+\beta)}{n!\Gamma(\beta)}\left(\int_{[0,1)}t^n f(t)d\mu (t)\right)z^n.
\end{align}

Since $\mu$ is $(\frac{\alpha+2}{p})$-Carleson measure. Take $f(z) = \sum _{k=0}^ \infty a_k z^k \in A_\alpha^p$, By \cite[Proposition 1]{CGH}, and Lemma \ref{le3.1},  we have that there exists $C>0$ such that
$$|\mu_{n,k,\beta}|=\frac{\Gamma(n+\beta)}{n!\Gamma(\beta)}|\mu_{n+k}|\leq \frac{C}{(k+1)^\frac{\alpha+2}{p}}\frac{\Gamma(n+\beta)}{n!\Gamma(\beta)},$$
$$|a_k| \leq C(k+1)^{(\alpha+2)/p-1} \quad for \; all \; n,k.$$
Then it follows that, for every $n$,
\begin{align}
  \sum_{k=0}^\infty |\mu_{n,k,\beta}||a_k| & \leq C\frac{\Gamma(n+\beta)}{n!\Gamma(\beta)} \sum_{k=0}^\infty \frac{|a_k|}{(k+1)^{(\alpha+2)/p}} =C\frac{\Gamma(n+\beta)}{n!\Gamma(\beta)} \sum_{k=0}^\infty \frac{|a_k|^p |a_k|^{(1-p)}}{(k+1)^{(\alpha+2)/p}}\notag \\
   & \leq C\frac{\Gamma(n+\beta)}{n!\Gamma(\beta)})\sum_{k=0}^\infty \frac{|a_k|^p (k+1)^{(\frac{\alpha+2}{p}-1)(1-p)}}{(k+1)^{(\alpha+2)/p}} \notag\\
   &=C\frac{\Gamma(n+\beta)}{n!\Gamma(\beta)}\sum_{k=0}^\infty(k+1)^{p-\alpha-3}|a_k|^p \notag.
\end{align}
This, Lemma \ref{le3.1} and (\ref{eq3.3}) readily yield that the series in (\ref{eq1.1}) is well defined for all $z\in \mathbb{D}$ and that

\begin{align}
 \mathcal{H}_{\mu,\beta}(f)(z)= \int_{[0,1)} \frac{f(t)}{{(1-tz)^\beta}} d\mu(t)\quad z \in \mathbb{D},  \notag
\end{align}
this give that $\mathcal{H}_{\mu,\beta}(f)=\mathcal{I}_{\mu,\beta} (f)$.

(\romannumeral2) When $1\leq p \leq 2$, since $\mu$ is a $\left(\frac{\alpha+2-(p-1)^2}{p}\right)$-Carleson, Arguing as in the preceding one, we obtain that there exists a constant $C>0$ such that, for every $n$,
$$\sum_{k=0}^\infty |\mu_{n,k,\beta}||a_k| \leq C\frac{\Gamma(n+\beta)}{n!\Gamma(\beta)}\sum_{k=0}^\infty(k+1)^{p-\alpha-3}|a_k|^p.$$

Then Lemma \ref{le3.1} again implies that the power series in (\ref{eq1.1}) is well defined and
$$\sum_{k=0}^\infty \mu_{n,k,\beta}a_k =\frac{\Gamma(n+\beta)}{n!\Gamma(\beta)}\int_{[0,1)}t^n f(t)d\mu (t). $$

It follows that for each $z\in \mathbb{D}$,
\begin{align}
&\sum_{n=0}^\infty \frac{\Gamma(n+\beta)}{n!\Gamma(\beta)}\left(\int_{[0,1)}t^n f(t)d\mu (t)\right)|z|^n \notag\\
=&\sum_{n=0}^\infty \frac{\Gamma(n+\beta)}{n!\Gamma(\beta)}\left(\sum_{k=0}^\infty \mu_{n,k}|a_k|\right)|z|^n \notag \\
\leq & C\sum_{n=0}^\infty \frac{\Gamma(n+\beta)}{n!\Gamma(\beta)}|z|^n\sum_{k=0}^\infty(k+1)^{p-\alpha-3}|a_k|^p\notag \\
 \leq &\frac{C}{(1-|z|)^\beta}\sum_{k=0}^\infty(k+1)^{p-\alpha-3}|a_k|^p<\infty, \notag
\end{align}
we deduce that
\begin{align}\label{eq3.4}
 \mathcal{H}_{\mu_,\beta}(f)(z)= & \sum_{n=0}^\infty\frac{\Gamma(n+\beta)}{n!\Gamma(\beta)}\left(\int_{[0,1)}t^n f(t)d\mu (t)\right)z^n \notag \\
 =& \int_{[0,1)}  \sum_{n=0}^\infty\frac{\Gamma(n+\beta)}{n!\Gamma(\beta)}(tz)^n f(t) d\mu(t) \notag \\
 =& \int_{[0,1)} \frac{f(t)}{{(1-tz)^\beta}} d\mu(t),
\end{align}

which together implies that $\mathcal{H}_{\mu,\beta}$ is a well defined for all $z\in \mathbb{D}$, and $\mathcal{H}_{\mu,\beta}(f)=\mathcal{I}_{\mu_,\beta} (f)$.

(\romannumeral3)The proof is similar to that of (\romannumeral2). We omit the details.
\end{proof}

\begin{remark} Theorem \ref{Th3.1} is the promotion of Theorem 2.1 in \cite{YDBE}, when $\alpha=0$, Theorem \ref{Th3.1} is same as Theorem 2.1 in \cite{YDBE}.
\end{remark}
\section{  Bounededness of  $\mathcal{H}_{\mu,\beta}$ acting on Weighted Bergman spaces and Dirichlet spaces} \label{s4}

In this section, we mainly characterize those measures $\mu$ for which $\mathcal{H}_{\mu,\beta}$ is a bounded operator on the weighted Bergman spaces $A_\alpha^p$ and the Dirichlet spaces $\mathcal{D}_\alpha^p$ for some different $\beta$, $p$ and $\alpha$.

\begin{lemma}\label{Le4.1}\cite{GHMA}
If $1 < p < \infty$ and $\alpha > -1$, then the dual of $A_\alpha^p$ can be identified with $A_\gamma^{p'}$ and $\gamma$ is any number with $\gamma > -1$, under the pairing
$$<h,f>_{A_{p,\alpha,\gamma}}=\int_{\mathbb{D}}h(z)\overline{f(z)}(1-|z|^2)^{\frac{\alpha}{p}+\frac{\gamma}{p'}} dA(z), \quad h\in A_\alpha^p, \quad f\in A_\gamma^{p'}.$$

If $1 < p < \infty $ and $\alpha > -1$, then the dual of $\mathcal{D}_\alpha^p$ can be identified with $\mathcal{D}_\gamma^{p'}$ where and $\gamma$ is any number with $\gamma > -1$, under the pairing
$$<h,f>_{\mathcal{D}_{p,\alpha,\gamma}}=h(0)\overline{f(0)}+\int_{\mathbb{D}}h'(z)\overline{f'(z)}(1-|z|^2)^{\frac{\alpha}{p}+\frac{\gamma}{p'}} dA(z), \quad h\in \mathcal{D}_\alpha^p, \quad f\in \mathcal{D}_\gamma^{p'}.$$

\end{lemma}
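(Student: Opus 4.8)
The plan is to obtain the Dirichlet identity as a formal consequence of the Bergman identity, and to prove the Bergman identity by the classical duality scheme for weighted Bergman spaces, upgraded by a ``weight transport'' so that the exponent of the pairing weight becomes $\tfrac{\alpha}{p}+\tfrac{\gamma}{p'}$ for an \emph{arbitrary} $\gamma>-1$. First, the boundedness of the pairing is immediate: since $(1-|z|^2)^{\alpha/p+\gamma/p'}=\bigl((1-|z|^2)^\alpha\bigr)^{1/p}\bigl((1-|z|^2)^\gamma\bigr)^{1/p'}$, Hölder's inequality with exponents $p,p'$ gives $|\langle h,f\rangle_{A_{p,\alpha,\gamma}}|\le C\,\|h\|_{A_\alpha^p}\|f\|_{A_\gamma^{p'}}$ for $h\in A_\alpha^p$, $f\in A_\gamma^{p'}$, so $f\mapsto\langle\cdot,f\rangle_{A_{p,\alpha,\gamma}}$ maps $A_\gamma^{p'}$ boundedly into $(A_\alpha^p)^*$; it is injective because testing against monomials recovers the Taylor coefficients of $f$. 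The work is to show it is onto with $\|f\|_{A_\gamma^{p'}}\asymp\|\Lambda\|$.

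For surjectivity, let $\Lambda\in(A_\alpha^p)^*$ and write $dA_\alpha:=(\alpha+1)(1-|z|^2)^\alpha\,dA$, so that $A_\alpha^p$ sits isometrically in $L^p(dA_\alpha)$. By Hahn--Banach and the Riesz representation in $L^p$ there is $k\in L^{p'}(dA_\alpha)$ with $\|k\|_{L^{p'}(dA_\alpha)}=\|\Lambda\|$ and $\Lambda(h)=\int_{\mathbb D}h\,\overline k\,dA_\alpha$ for $h\in A_\alpha^p$. Since the Bergman projection $P_\alpha$ is bounded on $L^{p'}(dA_\alpha)$ for $1<p<\infty$, is self-adjoint on $L^2(dA_\alpha)$, and fixes holomorphic functions, one checks first for $h$ a polynomial (then by density on $A_\alpha^p$) that $\Lambda(h)=\int_{\mathbb D}h\,\overline{\phi}\,dA_\alpha$ with $\phi:=P_\alpha k\in A_\alpha^{p'}$ and $\|\phi\|_{A_\alpha^{p'}}\lesssim\|\Lambda\|$. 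This is just the classical identification $(A_\alpha^p)^*\cong A_\alpha^{p'}$ under the weight $(1-|z|^2)^\alpha$.

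It remains to transport the weight from $(1-|z|^2)^\alpha$ to $(1-|z|^2)^{\alpha/p+\gamma/p'}$. Factoring $(1-|z|^2)^{\alpha/p+\gamma/p'}=(1-|z|^2)^\alpha(1-|z|^2)^{(\gamma-\alpha)/p'}$, a function $f\in A_\gamma^{p'}$ represents $\Lambda$ through $\langle\cdot,f\rangle_{A_{p,\alpha,\gamma}}$ exactly when, for all $h\in A_\alpha^p$, $\int h\,\overline{f\,(1-|z|^2)^{(\gamma-\alpha)/p'}}\,dA_\alpha=\int h\,\overline{\phi}\,dA_\alpha$ up to the normalizing constant, i.e.
\[
P_\alpha\bigl(f\,(1-|z|^2)^{(\gamma-\alpha)/p'}\bigr)=c_{\alpha}\,\phi .
\]
I would produce such an $f$ by applying to $\phi$ the fractional Bergman operator $g\mapsto\int_{\mathbb D}\frac{g(w)(1-|w|^2)^{\alpha}}{(1-z\overline w)^{2+\gamma}}\,dA(w)$ (up to a constant): the Forelli--Rudin estimates show this operator maps $A_\alpha^{p'}$ boundedly into $A_\gamma^{p'}$, whence $f\in A_\gamma^{p'}$ with $\|f\|_{A_\gamma^{p'}}\lesssim\|\phi\|_{A_\alpha^{p'}}\lesssim\|\Lambda\|$, while the composition formula for kernels of this Bergman type, together with Fubini (justified by the same estimates), yields the displayed reproducing identity. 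Combined with the boundedness of the pairing this gives $\|f\|_{A_\gamma^{p'}}\asymp\|\Lambda\|$, so the map of the first paragraph is an isomorphism onto $(A_\alpha^p)^*$. The delicate point is precisely this transport step: one must land the representative in $A_\gamma^{p'}$ (not merely in some weighted Bergman space) and match it to the mixed weight $\alpha/p+\gamma/p'$, which rests on the mapping properties of fractional Bergman projections and the kernel composition identities; everything else is Hölder's inequality or a formal reduction.

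Finally, for the Dirichlet statement the map $f\mapsto(f(0),f')$ identifies $\mathcal D_\alpha^p$ with $\mathbb C\oplus A_\alpha^p$ up to an equivalent norm, so $(\mathcal D_\alpha^p)^*\cong\mathbb C\oplus(A_\alpha^p)^*\cong\mathbb C\oplus A_\gamma^{p'}\cong\mathcal D_\gamma^{p'}$, the last identification sending $g$ to its primitive with prescribed value at the origin. Unwinding this chain, a functional arising from $f\in\mathcal D_\gamma^{p'}$ acts on $h\in\mathcal D_\alpha^p$ by $h(0)\overline{f(0)}+\langle h',f'\rangle_{A_{p,\alpha,\gamma}}$, which is exactly $\langle h,f\rangle_{\mathcal D_{p,\alpha,\gamma}}$; this completes the reduction to the Bergman case treated above.
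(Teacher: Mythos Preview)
The paper does not prove this lemma at all: it is stated with the citation \cite{GHMA} and used as a black box, so there is no ``paper's own proof'' to compare against. Your proposal therefore supplies something the paper omits.

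As for the content of your argument, the scheme is the standard one and is essentially correct: boundedness of the pairing by H\"older, Hahn--Banach plus Riesz representation in $L^p(dA_\alpha)$, then the $L^{p'}(dA_\alpha)$-boundedness of $P_\alpha$ to land the representative in $A_\alpha^{p'}$, followed by a weight transport to reach the mixed exponent $\alpha/p+\gamma/p'$; the Dirichlet case is then an immediate corollary via $f\mapsto(f(0),f')$. The one place that is genuinely only sketched is the transport step: you need to check carefully that the operator $T_\gamma g(z)=c\int_{\mathbb D}\frac{g(w)(1-|w|^2)^{\alpha}}{(1-z\overline w)^{2+\gamma}}\,dA(w)$ (i) maps $A_\alpha^{p'}$ boundedly into $A_\gamma^{p'}$, which follows from the Forelli--Rudin integral estimates provided $\gamma>-1$, and (ii) satisfies the kernel composition identity $P_\alpha\bigl((1-|\cdot|^2)^{(\gamma-\alpha)/p'}T_\gamma g\bigr)=c'\,g$ on $A_\alpha^{p'}$, which is most cleanly verified on monomials and extended by density. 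Neither point is deep, but both deserve a line of justification rather than an appeal to ``the composition formula for kernels''; otherwise the argument is sound.
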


\begin{theorem}\label{Th4.01}
Suppose that $0<p <\infty $, $\alpha>-1$ . Let $\mu$ be a positive Borel measure on $[0,1)$ which satisfies the condition in Theorem \ref{Th3.1}.
\begin{itemize}
\item [($\romannumeral1$)] If  $q\geq p$ and $q > 1$, then $\mathcal{H}_{\mu,\alpha+2}$ is a bounded operator from $A_\alpha^p$ into $A_\alpha^q$ if and only if  $\mu$ is a $\left(\frac{\alpha+2}{p}+\frac{\alpha+2}{q'}\right)$-Carleson measure.
\item [($\romannumeral2$)] If  $q\geq p$ and $q = 1$, then $\mathcal{H}_{\mu,\alpha+2}$ is a bounded operator from $A_\alpha^p$ into $A_{\alpha}^1$ if and only if $\mu$ is a 1-logarithmic $(\frac{\alpha+2}{p})$-Carleson measure.
\end{itemize}
\end{theorem}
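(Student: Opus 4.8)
The plan is to reduce boundedness of $\mathcal{H}_{\mu,\alpha+2}$ to a bilinear estimate for $(f,g)\mapsto\int_{[0,1)}f(t)\overline{g(t)}\,d\mu(t)$. Since $\mu$ satisfies the hypothesis of Theorem~\ref{Th3.1}, we have $\mathcal{H}_{\mu,\alpha+2}(f)(z)=\int_{[0,1)}\frac{f(t)}{(1-tz)^{\alpha+2}}\,d\mu(t)$ for $f\in A_\alpha^p$. Pairing this against a test function $g$ in the pairing of Lemma~\ref{Le4.1} with $\gamma=\alpha$ (so that the weight is $(1-|z|^2)^{\alpha}$) and interchanging the order of integration --- which is legitimate by absolute convergence, exactly as in the proof of Theorem~\ref{Th3.1}, after the routine dilation argument $g\mapsto g(\rho\,\cdot)$, $\rho\to1^{-}$ --- one meets the integral $\int_{\mathbb{D}}\frac{\overline{g(z)}(1-|z|^2)^{\alpha}}{(1-tz)^{\alpha+2}}\,dA(z)$. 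Because $\beta=\alpha+2$, the kernel $(1-tz)^{-(\alpha+2)}$ is, up to the factor $\alpha+1$, the reproducing kernel of $A_\alpha^2$; since $t\in[0,1)$ is real, a conjugation argument ($z\mapsto\bar z$ together with $g\mapsto\sum_n\overline{g_n}z^n$) combined with the reproducing formula gives $\int_{\mathbb{D}}\frac{\overline{g(z)}(1-|z|^2)^{\alpha}}{(1-tz)^{\alpha+2}}\,dA(z)=\frac{1}{\alpha+1}\overline{g(t)}$, whence
$$\langle\mathcal{H}_{\mu,\alpha+2}(f),g\rangle=\frac{1}{\alpha+1}\int_{[0,1)}f(t)\overline{g(t)}\,d\mu(t).$$
Thus $\mathcal{H}_{\mu,\alpha+2}\colon A_\alpha^p\to A_\alpha^q$ is bounded if and only if $\big|\int_{[0,1)}f\overline{g}\,d\mu\big|\lesssim\|f\|_{A_\alpha^p}\|g\|_{A_\alpha^{q'}}$ for all $f\in A_\alpha^p$ and $g\in A_\alpha^{q'}$ in case $(\romannumeral1)$, by the duality $(A_\alpha^q)^{*}=A_\alpha^{q'}$ of Lemma~\ref{Le4.1} (available since $q>1$); and, in case $(\romannumeral2)$, if and only if $\big|\int_{[0,1)}f\overline{g}\,d\mu\big|\lesssim\|f\|_{A_\alpha^p}\|g\|_{\mathcal{B}}$ for all $f\in A_\alpha^p$ and $g\in\mathcal{B}$, using the classical identification of $\mathcal{B}$ with the dual of $A_\alpha^1$ under the analogous pairing (see~\cite{ZOT}).

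For the sufficiency in $(\romannumeral1)$, assume $\mu$ is a $\big(\frac{\alpha+2}{p}+\frac{\alpha+2}{q'}\big)$-Carleson measure and put $r=1+\frac{p}{q'}$ and $r'=1+\frac{q'}{p}$; these are conjugate, satisfy $\frac{r}{p}=\frac{r'}{q'}=\frac1p+\frac1{q'}$, and obey $r\ge p$, $r'\ge q'$ precisely because $q\ge p$. Then $\mu$ is at once a $\frac{(2+\alpha)r}{p}$-Carleson and a $\frac{(2+\alpha)r'}{q'}$-Carleson measure, so Lemma~\ref{le3.2} yields $\big(\int_{[0,1)}|f|^{r}\,d\mu\big)^{1/r}\lesssim\|f\|_{A_\alpha^p}$ and $\big(\int_{[0,1)}|g|^{r'}\,d\mu\big)^{1/r'}\lesssim\|g\|_{A_\alpha^{q'}}$, and H\"older's inequality in $L^{r}(\mu)$ and $L^{r'}(\mu)$ closes the bilinear estimate. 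For the necessity in $(\romannumeral1)$, fix $a\in(0,1)$ and a large $b$ and take $f_a(z)=\frac{(1-a^2)^{\,b-(\alpha+2)/p}}{(1-az)^{b}}$ and $g_a(z)=\frac{(1-a^2)^{\,b-(\alpha+2)/q'}}{(1-az)^{b}}$, normalized by the standard integral estimate so that $\|f_a\|_{A_\alpha^p}\asymp\|g_a\|_{A_\alpha^{q'}}\asymp1$; since both have nonnegative Taylor coefficients, $f_a(t)g_a(t)>0$ on $[0,1)$, and restricting the $t$-integral to $[a,1)$, where $1-at\asymp 1-a$, gives $\int_{[0,1)}f_ag_a\,d\mu\gtrsim(1-a)^{-\frac{\alpha+2}{p}-\frac{\alpha+2}{q'}}\mu([a,1))$; the bilinear bound then forces $\mu([a,1))\lesssim(1-a)^{\frac{\alpha+2}{p}+\frac{\alpha+2}{q'}}$. (One may also avoid duality in this necessity by estimating $\|\mathcal{H}_{\mu,\alpha+2}(f_a)\|_{A_\alpha^q}$ from below by its integral over a fixed-eccentricity disc centred at $a$, using that $\operatorname{Re}(1-tz)^{-(\alpha+2)}\gtrsim|1-tz|^{-(\alpha+2)}$ there.)

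Case $(\romannumeral2)$ proceeds along the same lines, but because $\beta=\alpha+2$ is now the critical exponent the relevant quantities pick up a logarithm. For sufficiency, the Bloch growth bound $|g(t)|\lesssim\|g\|_{\mathcal{B}}\log\frac{2}{1-t}$ reduces the bilinear estimate to $\int_{[0,1)}|f(t)|\log\frac{2}{1-t}\,d\mu(t)\lesssim\|f\|_{A_\alpha^p}$; writing $d\nu(t)=\log\frac{2}{1-t}\,d\mu(t)$, an elementary integration by parts comparing $\nu([t,1))$ with $\mu([t,1))$ shows that $\mu$ is a $1$-logarithmic $\frac{\alpha+2}{p}$-Carleson measure if and only if $\nu$ is a $\frac{\alpha+2}{p}$-Carleson measure, and then Lemma~\ref{le3.2} (with target exponent $1$, legitimate since $p\le q=1$) gives the embedding. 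For necessity, pair the same normalized $f_a$ with $g_a(z)=\log\frac{2}{1-az}$, which lies in $\mathcal{B}$ with $\|g_a\|_{\mathcal{B}}$ bounded uniformly in $a$ (indeed $(1-|z|^2)|g_a'(z)|=\frac{a(1-|z|^2)}{|1-az|}\le 2$, because $|1-az|\ge 1-|z|$); the identity of the first paragraph gives $\langle\mathcal{H}_{\mu,\alpha+2}(f_a),g_a\rangle=\frac{1}{\alpha+1}\int_{[0,1)}f_a(t)\log\frac{2}{1-at}\,d\mu(t)\gtrsim(1-a)^{-\frac{\alpha+2}{p}}\log\frac{1}{1-a}\,\mu([a,1))$, while boundedness gives $\big|\langle\mathcal{H}_{\mu,\alpha+2}(f_a),g_a\rangle\big|\le\|\mathcal{H}_{\mu,\alpha+2}(f_a)\|_{A_\alpha^1}\,\|g_a\|_{\mathcal{B}}\lesssim\|f_a\|_{A_\alpha^p}\asymp1$; hence $\mu([a,1))\log\frac{1}{1-a}\lesssim(1-a)^{\frac{\alpha+2}{p}}$.

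The delicate point is the necessity in $(\romannumeral2)$: to capture the extra logarithmic factor one must pair $\mathcal{H}_{\mu,\alpha+2}(f_a)$ against a Bloch function rather than an $H^{\infty}$ function --- an $H^{\infty}$ test function would have norm of order $\log\frac{1}{1-a}$, which would exactly absorb the gain --- and this brings in the duality of $\mathcal{B}$ with $(A_\alpha^1)^{*}$ together with a careful justification of the reproducing-kernel identity in that pairing; the cleanest way to secure the latter is to observe first, by testing on $f_a$ as in $(\romannumeral1)$ with $q=1$, that boundedness already makes $\mu$ a plain $\frac{\alpha+2}{p}$-Carleson measure, which supplies the integrability needed for the interchange. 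The remaining ingredients --- the Fubini step in the first paragraph, the integration by parts for $\nu$, the standard estimates $\int_{\mathbb{D}}\frac{(1-|z|^2)^{\alpha}}{|1-tz|^{q(\alpha+2)}}\,dA(z)\asymp(1-t)^{-(q-1)(\alpha+2)}$ for $q>1$ and $\asymp\log\frac{2}{1-t}$ for $q=1$, and the normalizations $\|f_a\|_{A_\alpha^p}\asymp1$ --- are routine.
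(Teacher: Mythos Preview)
Your proof is correct and follows essentially the same route as the paper's (which deduces Theorem~\ref{Th4.01} from the more general Theorem~\ref{Th4.1} with $\beta=\alpha+2$): reduce boundedness to the bilinear estimate via the reproducing-kernel identity and duality (Lemma~\ref{Le4.1} for $q>1$, the $A_\alpha^1$--Bloch pairing for $q=1$), then prove sufficiency by Lemma~\ref{le3.2} and H\"older with exponents $1+p/q'$, $1+q'/p$, and necessity by the test functions $f_a$, $g_a$ (with $g_a(z)=\log\frac{2}{1-az}$ in the logarithmic case). The only cosmetic differences are that you take a generic large exponent $b$ in the test functions where the paper uses $b=2(\alpha+2)/p$, you argue the $\mu\leftrightarrow\nu$ conversion by integration by parts where the paper cites \cite[Proposition~2.5]{CCO}, and you invoke $(A_\alpha^1)^*\cong\mathcal{B}$ where the paper uses the predual identification $(\mathcal{B}_0)^*\cong A_\alpha^1$.
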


The Theorem \ref{Th4.01} is also closely related to Theorem \ref{Th4.1}, it we suffices to prove Theorem \ref{Th4.1}
\begin{theorem}\label{Th4.1}
Suppose that $0<p <\infty $, $\alpha>-1$ and $\beta>1$. Let $\mu$ be a positive Borel measure on $[0,1)$ which satisfies the condition in Theorem \ref{Th3.1}.
\begin{itemize}
\item [($\romannumeral1$)] If  $q\geq p$ and $q > 1$, if $\mathcal{H}_{\mu,\beta}$ is a bounded operator from $A_\alpha^p$ into $A_{\beta-2}^q$ then $\mu$ is a $\left(\frac{\alpha+2}{p}+\frac{\beta}{q'}\right)$-Carleson measure, conversely if $\mu$ is a $\left(\max\{\alpha+2,\beta\}(\frac{1}{p}+\frac{1}{q'})\right)$-Carleson measure then $\mathcal{H}_{\mu,\beta}$ is a bounded operator from $A_\alpha^p$ into $A_{(\beta-2)}^q$.
\item [($\romannumeral2$)] If  $q\geq p$ and $q = 1$, then $\mathcal{H}_{\mu,\beta}$ is a bounded operator from $A_\alpha^p$ into $A_{\beta-2}^1$ if and only if $\mu$ is a 1-logarithmic $(\frac{\alpha+2}{p})$-Carleson measure.
\end{itemize}
\end{theorem}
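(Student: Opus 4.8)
\textbf{Proof proposal for Theorem \ref{Th4.1}.}

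The plan is to treat the two parts separately, and within part (i) to prove necessity and sufficiency by quite different mechanisms. For the necessity direction in (i), I would test the operator on the standard family
$f_b(z) = (1-bz)^{-(\alpha+2)/p}$ with $b\in[0,1)$, for which $\|f_b\|_{A_\alpha^p} \asymp (1-b)^{-1/p'}$ (up to the usual logarithmic anomaly which does not occur here since $\beta>1$). Using the integral representation \eqref{eq3.2} one computes $\mathcal{I}_{\mu,\beta}(f_b)(z)=\int_{[0,1)}(1-bt)^{-(\alpha+2)/p}(1-tz)^{-\beta}d\mu(t)$, and evaluating at $z=b$ and restricting the integral to $t\in[b,1)$ gives a lower bound of the form $\mathcal{H}_{\mu,\beta}(f_b)(b)\gtrsim \mu([b,1))(1-b)^{-(\alpha+2)/p-\beta}$. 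On the other hand, boundedness into $A_{\beta-2}^q$ together with the pointwise growth estimate $|g(z)|\lesssim \|g\|_{A_{\beta-2}^q}(1-|z|)^{-\beta/q}$ for $g\in A_{\beta-2}^q$ (here one needs $\beta-2>-1$, i.e. $\beta>1$, which is assumed) bounds $|\mathcal{H}_{\mu,\beta}(f_b)(b)|\lesssim \|f_b\|_{A_\alpha^p}(1-b)^{-\beta/q} \asymp (1-b)^{-1/p'-\beta/q}$. Comparing the two estimates and simplifying the exponents (using $(\alpha+2)/p - 1/p' = (\alpha+2)/p - 1 + 1/p$, and $\beta - \beta/q = \beta/q'$) yields $\mu([b,1))\lesssim (1-b)^{(\alpha+2)/p + \beta/q'}$, which is exactly the asserted Carleson condition.

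For the sufficiency direction in (i), suppose $\mu$ is an $s$-Carleson measure with $s=\max\{\alpha+2,\beta\}(\tfrac1p+\tfrac1{q'})$. The strategy is duality: since $q>1$, by Lemma \ref{Le4.1} it suffices to estimate the pairing $\langle \mathcal{H}_{\mu,\beta}(f), g\rangle_{A_{q,\beta-2,\gamma}}$ for $f\in A_\alpha^p$ and $g\in A_\gamma^{q'}$ with a conveniently chosen $\gamma$. Plugging in the integral formula \eqref{eq3.2} and applying Fubini,
\[
\langle \mathcal{H}_{\mu,\beta}(f),g\rangle = \int_{[0,1)} f(t)\left(\int_{\mathbb{D}}\frac{\overline{g(z)}(1-|z|^2)^{\frac{\beta-2}{q}+\frac{\gamma}{q'}}}{(1-tz)^{\beta}}\,dA(z)\right)d\mu(t),
\]
and the inner integral is, by the standard reproducing/duality identity for Bergman spaces, comparable to $\overline{g(t)}$ up to a constant once $\gamma$ is chosen so that $\frac{\beta-2}{q}+\frac{\gamma}{q'}=\beta-2$, i.e.\ $\gamma=\beta-2$. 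Thus the pairing reduces (up to a constant) to $\int_{[0,1)} f(t)\overline{g(t)}\,d\mu(t)$, which by Hölder's inequality with exponents $p/1$-type splitting is bounded by $\left(\int|f|^{r}d\mu\right)^{1/r}\left(\int|g|^{r'}d\mu\right)^{1/r'}$ for a suitable intermediate exponent; choosing the split so that one factor is controlled by $\|f\|_{A_\alpha^p}$ via Lemma \ref{le3.2} (requiring $\mu$ to be an $(\tfrac{(2+\alpha)\,\cdot}{p})$-Carleson measure) and the other by $\|g\|_{A_{\beta-2}^{q'}}$ via the same lemma (requiring a $(\tfrac{\beta\,\cdot}{q'})$-Carleson condition), the $\max$ in the hypothesis is exactly what makes both simultaneously available. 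The main obstacle here is bookkeeping: verifying that a single Hölder split delivers both Carleson exponents under the stated $\max$-hypothesis, and justifying the Fubini interchange and the Bergman reproducing identity with the (possibly negative) weight exponent $\gamma=\beta-2$ — this is where the restriction $\beta>1$ is used again.

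For part (ii), with $q=1$ the dual of $A_{\beta-2}^1$ is not a Bergman space but (essentially) the Bloch-type space, so the duality argument is replaced by a direct one. For necessity, test again on $f_b$ and use that $\mathcal{H}_{\mu,\beta}$ bounded into $A_{\beta-2}^1$ forces control of $\|\mathcal{H}_{\mu,\beta}(f_b)\|_{A_{\beta-2}^1}$; a computation of this $A_{\beta-2}^1$-norm via the integral representation, together with the extra logarithmic factor that appears precisely when integrating $(1-tz)^{-\beta}(1-|z|^2)^{\beta-2}$ over $\mathbb{D}$ against $\mu$, produces the $1$-logarithmic $(\tfrac{\alpha+2}{p})$-Carleson condition. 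For sufficiency, assuming $\mu$ is $1$-logarithmic $(\tfrac{\alpha+2}{p})$-Carleson, one estimates $\|\mathcal{H}_{\mu,\beta}(f)\|_{A_{\beta-2}^1}\le \int_{[0,1)}|f(t)|\left(\int_{\mathbb D}\frac{(1-|z|^2)^{\beta-2}}{|1-tz|^\beta}dA(z)\right)d\mu(t)$, and the inner integral is $\asymp \log\frac{2}{1-t}$ (the classical logarithmic estimate for Bergman-kernel integrals at the critical exponent). This reduces the bound to $\int_{[0,1)}|f(t)|\log\frac{2}{1-t}\,d\mu(t)\lesssim \|f\|_{A_\alpha^p}$, which is exactly the Carleson-embedding statement for the measure $\log\frac{2}{1-t}\,d\mu(t)$ into $A_\alpha^p$, i.e.\ the hypothesis that $\log\frac{2}{1-t}\,d\mu$ is a $(\tfrac{\alpha+2}{p})$-Carleson measure, which is the $1$-logarithmic condition. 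I expect the delicate points throughout to be (a) the sharp constants/exponents in the Bergman-kernel integral estimates, particularly isolating when the logarithm appears, and (b) in part (i) the simultaneous-Hölder bookkeeping described above; everything else is routine once \eqref{eq3.2}, Lemma \ref{le3.2}, and Lemma \ref{Le4.1} are in hand.
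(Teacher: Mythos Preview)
Your sufficiency arguments are essentially correct and, for part (i), coincide with the paper's: reduce via duality and the reproducing identity to the bilinear form $\int_{[0,1)} f(t)\overline{g(t)}\,d\mu(t)$, then split by H\"older with $s=1+p/q'$, $s'=1+q'/p$ and invoke Lemma~\ref{le3.2} on each factor, which is exactly where the $\max\{\alpha+2,\beta\}$ hypothesis is consumed. Your direct estimate for sufficiency in (ii) (bounding $\|\mathcal H_{\mu,\beta}(f)\|_{A_{\beta-2}^1}$ by $\int |f(t)|\log\tfrac{2}{1-t}\,d\mu(t)$ via the Forelli--Rudin integral at the critical exponent) is a legitimate and slightly more elementary alternative to the paper's route through the Bloch pairing.

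The necessity arguments, however, have genuine problems. In part (i) your test function $f_b(z)=(1-bz)^{-(\alpha+2)/p}$ satisfies
\[
\|f_b\|_{A_\alpha^p}^p=(\alpha+1)\int_{\mathbb D}\frac{(1-|z|^2)^\alpha}{|1-bz|^{\alpha+2}}\,dA(z)\asymp \log\frac{1}{1-b},
\]
by Forelli--Rudin with $t=0$; it is \emph{not} $\asymp(1-b)^{-1/p'}$, and the logarithm has nothing to do with $\beta$. With the correct norm your comparison yields only $\mu([b,1))\lesssim(\log\tfrac{1}{1-b})^{1/p}(1-b)^{(\alpha+2)/p+\beta/q'}$, which is strictly weaker than the asserted Carleson condition; moreover, even with your claimed norm the exponent bookkeeping does not close (the stray $-1/p'$ does not cancel). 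The point-evaluation idea is salvageable if you replace $f_b$ by $(1-bz)^{-2(\alpha+2)/p}$ (or, equivalently, use the normalized family $\bigl(\tfrac{1-b^2}{(1-bz)^2}\bigr)^{(\alpha+2)/p}$), whose $A_\alpha^p$-norm is bounded. The paper instead establishes the duality identity first and then tests the \emph{bilinear} form against the pair $f_a(z)=\bigl(\tfrac{1-a^2}{(1-az)^2}\bigr)^{(\alpha+2)/p}$, $g_a(z)=\bigl(\tfrac{1-a^2}{(1-az)^2}\bigr)^{\beta/q'}$; this avoids any norm computation beyond a uniform bound.

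In part (ii) your necessity sketch has a gap: you propose to lower-bound $\|\mathcal H_{\mu,\beta}(f_b)\|_{A_{\beta-2}^1}$ directly and say the logarithm ``appears'' from $\int_{\mathbb D}(1-tz)^{-\beta}(1-|z|^2)^{\beta-2}\,dA(z)$. But that integral (without absolute value) is just the constant $1/(\beta-1)$ by the reproducing property; the logarithmic Forelli--Rudin estimate is for $|1-tz|^{-\beta}$, and it gives an \emph{upper} bound on the $A^1_{\beta-2}$-norm, not a lower one. I do not see how to extract a logarithmic lower bound from a single test function by direct integration. The paper obtains it by pairing $\mathcal H_{\mu,\beta}(f_a)$ against $g_a(z)=\log\tfrac{2}{1-az}\in\mathcal B_0$ via the $(\mathcal B_0)^{**}\supset A_{\beta-2}^1$ duality; the logarithm then comes from $g_a(t)$ evaluated on $[a,1)$, not from a kernel integral.
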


\begin{proof}
Suppose that $0 <p< \infty $ , $\alpha>-1$ and $\beta>1$. Since $\mu$ satisfies the condition in Theorem \ref{Th3.1}, as in the proof of Theorem \ref{Th3.1}, we obtain that
\begin{align*}
	\int_{[0,1)}|f(t)|d\mu(t)< \infty,\quad f\in A_\alpha^p.
\end{align*}
Hence, it follows that
\begin{align}\label{eq4.1}
		\begin{split}
			& (\alpha+1)\int_{\mathbb{D}} \int_{[0,1)}\left|\frac{f(t) g( z)(1-|z|^2)^\alpha}{(1- t z)^{\beta}}\right| d \mu(t) dA(z) \\
			\leq & \frac{1}{(1-r)^{\beta}} \int_{[0,1)}|f(t)|d\mu(t)\int_{\mathbb{D}}(\alpha+1)(1-|z|^2)^\alpha|g( z)| dA(z) \\
			\leq & \frac{C}{(1-r)^{\beta}}\|g\|_{A_\alpha^1},~0\leq r<1,~f\in A_\alpha^p,~g\in A_\alpha^1,
		\end{split}
\end{align}
we can obtain that the reproducing kernel function of $A_\alpha^p$ is
$$K_\alpha(z,w)=\frac{1}{(1-z\bar{w})^{2+\alpha}}.$$
Now the reproducing property
$$f(z)=\int_{\mathbb{D}}(\alpha+1)f(w)(1-|w^2|)^\alpha K_\alpha(z,w)dA(w), \quad for \; all\; f \in A_\alpha^1. $$
Thus by using Fubini's theorem imply that
\begin{align}\label{eq4.2}
\begin{split}
		&\int_{\mathbb{D}} \overline{\mathcal{H}_{\mu,\beta}(f)(z)} g( z)(1-|z|^2)^{\beta-2} d A(z)\\
		=&\int_{\mathbb{D}} \int_{[0,1)} \frac{\overline{f(t)} d \mu(t)}{(1- t \bar{z})^{\beta}} g( z)(1-|z|^2)^{\beta-2} d A(z)\\
		=&\int_{[0,1)} \int_{\mathbb{D}} \frac{(1-|z|^2)^{\beta-2}g( z)}{(1- t \bar{z})^{\beta}} d A(z) \overline{f(t)} d \mu(t) \\
		=& \frac{1}{\beta-1}\int_{[0,1)} \overline{f(t)} g(t) d \mu(t),\quad f\in A_\alpha^p ,\;g\in A_{\beta-2}^1.
	\end{split}
\end{align}

(\romannumeral1)First consider the case $0<p<\infty$, $q\geq 1$ and $q\geq p$. Using (\ref{eq4.2}) and the duality theorem for Lemma \ref{Le4.1}, take $(A_{\beta-2}^q)^*\cong A_{\beta-2}^{q'}$ and $(A_{\beta-2}^{q'})^*\cong A_{\beta-2}^q(q>1)$. We obtain that
$\mathcal{H}_{\mu,\beta}$ is a bounded operator from $A_\alpha^p$ into $A_{\beta-2}^q$ if and only if there exists a positive constant $C$ such that
\begin{align}\label{eq4.3}
 \left| \int_{[0,1)}\overline{f(t)}g(t) d\mu(t) \right| \leq C\|f\|_{A_\alpha^p} \|g\|_{A_{\beta-2}^{q'}}, \quad  f \in A_\alpha^p, \; g\in A_{\beta-2}^{q'}.
\end{align}

Assume that $\mathcal{H}_{\mu,\beta}$ is a bounded operator from $A_\alpha^p$ into $A_{\beta-2}^{q'}$. Take the families of text functions
$$f_{a}(z)=\left(\frac{1-a^{2}}{(1-a z)^{2}}\right)^{(\alpha+2)/p} \quad \text{and} \quad g_{a}(z)=\left(\frac{1-a^{2}}{(1-a z)^{2}}\right)^{\beta/q'},\quad 0<a<1.$$
A calculation shows that $\{f_a\} \subset A_\alpha^p$, $\{g_a\} \subset A_{\beta-2}^{q'}$ and
\begin{align}
\sup_{a \in [0,1)}||f||_{A_\alpha^p} < \infty \quad and \quad \sup_{a \in [0,1)}||g||_{A_{\beta-2}^{q'}} < \infty . \notag
\end{align}
It follows that
\begin{align}
  \infty & > C \;\sup_{a \in [0,1)}\|f\|_{A_\alpha^p}\sup_{a \in [0,1)}\|g\|_{A_{\beta-2}^{q'}}  \notag \\
   & \geq \left|\int_{[0,1)}f_a(t)g_a(t)d\mu(t) \right| \notag \\
   & \geq  \int_{[a,1)} \left(\frac{1-a^2}{(1-at)^2} \right)^{(\alpha+2)/p} \left(\frac{1-a^2}{(1-at)^2}\right)^{\beta/q'}   d\mu(t)\notag \\
   & \geq \int_{[a,1)} \left(\frac{1-a^2}{(1-at)^2} \right)^{(\alpha+2)/p+\beta/q'}d\mu(t) \notag \\
    & \geq \frac{1}{(1-a^2)^{(\alpha+2)/p+\beta/q'}} \mu([a,1)).
\end{align}
This is equivalent to saying that $\mu$ is a $\left(\frac{\alpha+2}{p}+\frac{\beta}{q'}\right)$-Carleson measure.

On the other hand, if $\mu$ is a $\left(\max\{\alpha+2,\beta\}(\frac{1}{p}+\frac{1}{q'})\right)$-Carleson measure. Let $s=1+p/q'$. Then $s'=1+q'/p$. By Lemma \ref{le3.2}, we obtain that there exists a constant $C>0$ such that
\begin{align}
	\left(\int_{[0,1)}\left|f(t)\right|^s d\mu(t)\right)^{1/s}\leq C\|f\|_{A_\alpha^p},\quad \text{for~all~}f\in A_{\alpha}^p,
\end{align}
and
\begin{align}		
	\left(\int_{[0,1)}\left|g(t)\right|^{s'} d\mu(t)\right)^{1/s'}\leq C\|g\|_{A_{\beta-2}^{q'}},\quad \text{for~all~}g\in A_{\beta-2}^{q'}.
\end{align}
Then by H\"{o}lder's inequality,
\begin{align*}
	\int_{[0,1)} \left|f(t)\right|\left| g(t)\right| d \mu(t)&\leq \left(\int_{[0,1)}\left|f(t)\right|^s d\mu(t)\right)^{1/s}\left(\int_{[0,1)}\left|g(t)\right|^{s'} d\mu(t)\right)^{1/s'}\\
&\leq C\|f\|_{A_{\alpha}^p}\|g\|_{A_{\beta-2}^{q'}}.
\end{align*}

Hence, (\ref{eq4.3}) holds and then $\mathcal{H}_{\mu,\beta}$ is a bounded operator from $A_\alpha^p$ into $A_{\beta-2}^q$.

(\romannumeral2) Let us first recall the duality theorem for $A_\alpha^1$ \cite{ZOT}, we see that $A_\alpha^1$ can be identified as the dual of the little Bloch space under the pairing
\begin{align}\label{eq4.7}
  <h,g> =\int_{\mathbb{D}}h(z)\overline{g(z)}(1-|z|^2)^{\alpha} dA(z), \quad h\in \mathcal{B}_0, \quad g\in A_\alpha^{1}.
\end{align}
we obtain that
$\mathcal{H}_{\mu,\beta}$ is a bounded operator from $A_\alpha^p$ into $A_{\beta-2}^1$ if and only if there exists a positive constant $C$ such that
\begin{align}\label{eq4.8}
 \left| \int_{[0,1)}\overline{f(t)}g(t) d\mu(t) \right| \leq C\|f\|_{A_\alpha^p} \|g\|_{\mathcal{B}}, \quad  f \in A_\alpha^p, \; g\in \mathcal{B}_0.
\end{align}

Assume that $\mathcal{H}_{\mu,\beta}$ is a bounded operator from $A_\alpha^p$ into $A_{\beta-2}^{1}$. Take the families of text functions
$$f_{a}(z)=\left(\frac{1-a^{2}}{(1-a z)^{2}}\right)^{(\alpha+2)/p} \quad \text{and} \quad g_{a}(z)=\log \frac{2}{1-az},\quad 0<a<1.$$
A calculation shows that $\{f_a\} \subset A_\alpha^p$, $\{g_a\} \subset \mathcal{B}_0$ and
\begin{align}
\sup_{a \in [0,1)}||f||_{A_\alpha^p} < \infty \quad and \quad \sup_{a \in [0,1)}||g||_{\mathcal{B}} < \infty . \notag
\end{align}
It follows that
\begin{align}
  \infty & > C \;\sup_{a \in [0,1)}||f||_{A_\alpha^p}\sup_{a \in [0,1)}||g||_{\mathcal{B}}  \notag \\
   & \geq \left|\int_{[0,1)}f_a(t)g_a(t)d\mu(t) \right| \notag \\
   & \geq  \int_{[a,1)} \left(\frac{1-a^2}{(1-at)^2} \right)^{(\alpha+2)/p} \log \frac{2}{1-at}   d\mu(t)\notag \\
    & \geq \frac{\log \frac{2}{1-a} }{(1-a)^{(\alpha+2)/p}} \mu([a,1)).
\end{align}
This is equivalent to saying that $\mu$ is a 1-logarithmic $(\frac{\alpha+2}{p})$-Carleson measure.

Conversely, we assume that $\mu$ is a 1-logarithmic $(\frac{\alpha+2}{p})$-Carleson measure. It is well known that any function $g \in \mathcal{B}$ \cite{POB} has the property
$$|g(z)|\leq C\|g\|_{\mathcal{B}}\log\frac{2}{1-|z|}.$$

Thus, if $0 <p \leq 1$, for every $f \in A_\alpha^p$ and $g \in \mathcal{B}_0 \subset \mathcal{B}$, we have
\begin{align}
\left| \int_{[0,1)}\overline{f(t)}g(t) d\mu(t) \right| &\leq C \|g\|_{\mathcal{B}} \int_{[0,1)}|f(t)|\log\frac{2}{1-t}d\mu(t). \notag
\end{align}

Using \cite[Proposition 2.5]{CCO}, we obtain that $\log \frac{2}{1-t}d\mu(t)$ is a $(\frac{\alpha+2}{p})$-Carleson measure. Hence, Lemma \ref{le3.2} implies that
\begin{align}
\left| \int_{[0,1)}\overline{f(t)}g(t) d\mu(t) \right| &\leq C \|f\|_\alpha^p \|g\|_{\mathcal{B}} \quad  f \in A_\alpha^p, \; g\in \mathcal{B}_0.
\end{align}
Therefore, $\mathcal{H}_{\mu,\beta}$ is a bounded operator from $A_\alpha^p$ into $A_{\beta-2}^1$.
\end{proof}

\begin{corollary}
Suppose that $1\leq p <\infty $, $\alpha>-1$. Let $\mu$ be a positive Borel measure on $[0,1)$ which satisfies the condition in Theorem \ref{Th3.1}.
\begin{itemize}
\item [($\romannumeral1$)] If  $p > 1$, then $\mathcal{H}_{\mu,\alpha+2}$ is a bounded operator on $A_\alpha^p$  if and only if $\mu$ is a $(\alpha+2)$-Carleson measure.
\item [($\romannumeral2$)] If $p = 1$, then $\mathcal{H}_{\mu,\alpha+2}$ is a bounded operator on $A_\alpha^1$  if and only if $\mu$ is a 1-logarithmic $(\alpha+2)$-Carleson measure.
\end{itemize}
\end{corollary}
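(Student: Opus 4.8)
The plan is to obtain the Corollary as a direct specialization of Theorem \ref{Th4.1}, taking $\beta=\alpha+2$ and $q=p$. First I would record that the hypotheses of Theorem \ref{Th4.1} are satisfied under these choices: since $\alpha>-1$ we have $\beta=\alpha+2>1$; with $q=p$ the requirement $q\geq p$ holds with equality; and $\mu$ is assumed to satisfy the condition in Theorem \ref{Th3.1}. Moreover $\beta-2=\alpha$ and $q=p$, so $A_{\beta-2}^q=A_\alpha^p$, i.e. ``$\mathcal{H}_{\mu,\alpha+2}$ bounded from $A_\alpha^p$ into $A_{\beta-2}^q$'' means exactly ``$\mathcal{H}_{\mu,\alpha+2}$ bounded on $A_\alpha^p$''.

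For part ($\romannumeral1$), when $p>1$ we have $q=p>1$, so Theorem \ref{Th4.1}($\romannumeral1$) applies in both directions. The necessity clause says a bounded $\mathcal{H}_{\mu,\beta}$ forces $\mu$ to be a $\left(\frac{\alpha+2}{p}+\frac{\beta}{q'}\right)$-Carleson measure; substituting $\beta=\alpha+2$ and $q'=p'$ gives $\frac{\alpha+2}{p}+\frac{\alpha+2}{p'}=(\alpha+2)\left(\frac1p+\frac1{p'}\right)=\alpha+2$, i.e. $\mu$ is an $(\alpha+2)$-Carleson measure. Conversely, the sufficiency clause says that if $\mu$ is a $\left(\max\{\alpha+2,\beta\}\left(\frac1p+\frac1{q'}\right)\right)$-Carleson measure then $\mathcal{H}_{\mu,\beta}$ is bounded; since $\beta=\alpha+2$ we have $\max\{\alpha+2,\beta\}=\alpha+2$, and since $q=p$ we have $\frac1p+\frac1{q'}=\frac1p+\frac1{p'}=1$, so this condition is again exactly that $\mu$ is an $(\alpha+2)$-Carleson measure. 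The two exponents coincide, which yields the ``if and only if''.

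For part ($\romannumeral2$), when $p=1$ we have $q=p=1$, so Theorem \ref{Th4.1}($\romannumeral2$) applies and gives directly: $\mathcal{H}_{\mu,\beta}$ is bounded from $A_\alpha^1$ into $A_{\beta-2}^1$ if and only if $\mu$ is a $1$-logarithmic $\left(\frac{\alpha+2}{p}\right)$-Carleson measure, i.e. (with $p=1$) a $1$-logarithmic $(\alpha+2)$-Carleson measure; since $A_{\beta-2}^1=A_\alpha^1$ this is the assertion of the Corollary.

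There is no real obstacle here: the only substantive observation is that the gap in Theorem \ref{Th4.1}($\romannumeral1$) between the necessary exponent $\frac{\alpha+2}{p}+\frac{\beta}{q'}$ and the sufficient exponent $\max\{\alpha+2,\beta\}\left(\frac1p+\frac1{q'}\right)$ closes precisely on the ``diagonal'' $\beta=\alpha+2$, $q=p$, because there $\max\{\alpha+2,\beta\}=\alpha+2$ and $\frac1p+\frac1{q'}=1$. The remaining work is just the parameter bookkeeping indicated above.
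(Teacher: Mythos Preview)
Your proposal is correct and follows exactly the route the paper intends: the Corollary is stated in the paper without proof, as an immediate specialization of Theorem~\ref{Th4.1} (equivalently, Theorem~\ref{Th4.01}) upon setting $\beta=\alpha+2$ and $q=p$, and your verification that the necessary and sufficient Carleson exponents coincide on this diagonal is precisely the bookkeeping needed.
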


Now, it is well known that $A_\alpha^p = D_{\alpha+p}^p$ (see \cite[Theorem 4.28]{ZOT}). Hence, regarding Dirichlet spaces Theorem \ref{Th4.1} says the following.

\begin{corollary}
Suppose that $0<p <\infty $, $\alpha>p-1$ and $\beta>q+1$. Let $\mu$ be a positive Borel measure on $[0,1)$ which satisfies the condition in Theorem \ref{Th3.1}.
\begin{itemize}
\item [($\romannumeral1$)] If  $q\geq p$ and $q > 1$, if $\mathcal{H}_{\mu,\beta-q}$ is a bounded operator from $\mathcal{D}_\alpha^p$ into $\mathcal{D}_{\beta-2}^q$ then $\mu$ is a $\left(\frac{\alpha+2}{p}+\frac{\beta-q}{q'}-1\right)$-Carleson measure.
     Conversely if $\mu$ is a $\left(\max\{\alpha+2-p,\beta-q\}(\frac{1}{p}+\frac{1}{q'})\right)$-Carleson measure then $\mathcal{H}_{\mu,\beta-q}$ is a bounded operator from $\mathcal{D}_\alpha^p$ into $\mathcal{D}_{\beta-2}^q$.
\item [($\romannumeral2$)] If  $q\geq p$ and $q = 1$, then $\mathcal{H}_{\mu,\beta-1}$ is a bounded operator from $\mathcal{D}_\alpha^p$ into $\mathcal{D}_{\beta-2}^1$ if and only if $\mu$ is a 1-logarithmic $(\frac{\alpha+2}{p}-1)$-Carleson measure.
\end{itemize}
\end{corollary}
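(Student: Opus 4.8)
The plan is to \emph{reduce everything to Theorem \ref{Th4.1}}, using the classical identification $A_\beta^p=\mathcal{D}_{\beta+p}^p$ (see \cite[Theorem 4.28]{ZOT}): a function $f$ lies in $\mathcal{D}_\gamma^r$ if and only if $f'\in A_\gamma^r$, which amounts to $f\in A_{\gamma-r}^r$. Under this dictionary the domain space $\mathcal{D}_\alpha^p$ (with $\alpha>p-1$, so that $\alpha-p>-1$) is the Bergman space $A_{\alpha-p}^p$, and the target space $\mathcal{D}_{\beta-2}^q$ (with $\beta>q+1$, so $\beta-2-q>-1$) is the Bergman space $A_{\beta-2-q}^q$. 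So the first step is to replace $\alpha$ by $\widetilde\alpha:=\alpha-p$ and to note that the target exponent $\beta-2-q$ must equal $\widetilde\beta-2$ in the notation of Theorem \ref{Th4.1}, forcing $\widetilde\beta=\beta-q$; this is exactly why the operator appearing in the statement is $\mathcal{H}_{\mu,\beta-q}$ (and $\mathcal{H}_{\mu,\beta-1}$ in part (ii), where $q=1$). One should also check the hypothesis of Theorem \ref{Th4.1} is met: $\widetilde\beta=\beta-q>1$ follows from $\beta>q+1$, and $\widetilde\alpha=\alpha-p>-1$ follows from $\alpha>p-1$.

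Second, I would simply substitute $(\widetilde\alpha,\widetilde\beta)=(\alpha-p,\beta-q)$ into the Carleson-measure conditions produced by Theorem \ref{Th4.1}. In part (i), the necessary condition ``$\mu$ is a $\bigl(\frac{\widetilde\alpha+2}{p}+\frac{\widetilde\beta}{q'}\bigr)$-Carleson measure'' becomes ``$\mu$ is a $\bigl(\frac{\alpha-p+2}{p}+\frac{\beta-q}{q'}\bigr)$-Carleson measure''; since $\frac{\alpha-p+2}{p}=\frac{\alpha+2}{p}-1$, this is precisely the stated exponent $\frac{\alpha+2}{p}+\frac{\beta-q}{q'}-1$. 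The sufficient condition ``$\mu$ is a $\bigl(\max\{\widetilde\alpha+2,\widetilde\beta\}(\frac1p+\frac1{q'})\bigr)$-Carleson measure'' becomes ``$\mu$ is a $\bigl(\max\{\alpha+2-p,\beta-q\}(\frac1p+\frac1{q'})\bigr)$-Carleson measure'', matching the statement verbatim. For part (ii), with $q=1$ (so $q'=\infty$ and $\widetilde\beta=\beta-1$), the characterization ``$\mu$ is a $1$-logarithmic $\bigl(\frac{\widetilde\alpha+2}{p}\bigr)$-Carleson measure'' becomes ``$\mu$ is a $1$-logarithmic $\bigl(\frac{\alpha+2}{p}-1\bigr)$-Carleson measure'', again exactly as stated. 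I would also remark that $f\in\mathcal{D}_\alpha^p$ satisfies the hypothesis ``$\mu$ satisfies the condition in Theorem \ref{Th3.1}'' relative to $A_{\alpha-p}^p$, so the well-definedness from Section \ref{s3} carries over, together with the integral representation \eqref{eq3.2}.

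The one genuine subtlety — and the step I expect to require the most care — is that boundedness of $\mathcal{H}_{\mu,\beta-q}$ as an operator $\mathcal{D}_\alpha^p\to\mathcal{D}_{\beta-2}^q$ is \emph{a priori} only equivalent to boundedness of the induced map on derivatives, i.e. $f\mapsto (\mathcal{H}_{\mu,\beta-q}f)'$ from $A_{\alpha-p}^p$ into $A_{\beta-2-q}^q$; one must verify that differentiating $\mathcal{H}_{\mu,\beta-q}f(z)=\int_{[0,1)}\frac{f(t)}{(1-tz)^{\beta-q}}\,d\mu(t)$ under the integral sign produces $(\beta-q)\int_{[0,1)}\frac{t\,f(t)}{(1-tz)^{\beta-q+1}}\,d\mu(t)$, which up to the harmless factor $t\in[0,1)$ and the constant $\beta-q$ is $\mathcal{H}_{\mu',\beta-q+1}$ applied to $f$ for $d\mu'(t)=t\,d\mu(t)$, and that $\mu'$ and $\mu$ generate the \emph{same} Carleson and logarithmic-Carleson conditions near the boundary $t\to1^-$. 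Granting this bookkeeping — which is exactly the $A_\gamma^p=\mathcal{D}_{\gamma+p}^p$ philosophy and uses that on $[0,1)$ the factor $t$ is bounded above and below away from $0$ on $[t_0,1)$ — the corollary follows immediately from Theorem \ref{Th4.1}, and the $\|f(0)\|$ and norm-of-$g(0)$ terms in the Dirichlet norms only contribute finite-rank corrections that do not affect boundedness. Hence the proof is a translation argument and I would present it as such, writing ``apply Theorem \ref{Th4.1} with $\alpha$ replaced by $\alpha-p$ and $\beta$ by $\beta-q$, using $A_\gamma^r=\mathcal{D}_{\gamma+r}^r$''.
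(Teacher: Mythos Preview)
Your approach is correct and is exactly the paper's: the corollary is stated immediately after the remark ``it is well known that $A_\alpha^p=\mathcal{D}_{\alpha+p}^p$ (see \cite[Theorem 4.28]{ZOT}); hence, regarding Dirichlet spaces Theorem \ref{Th4.1} says the following,'' with no further argument. The ``genuine subtlety'' you raise is in fact a non-issue: the identification $A_\gamma^r=\mathcal{D}_{\gamma+r}^r$ is an equivalence of \emph{norms} on the same set of functions, so boundedness of $\mathcal{H}_{\mu,\beta-q}:\mathcal{D}_\alpha^p\to\mathcal{D}_{\beta-2}^q$ is \emph{literally} the same statement as boundedness of $\mathcal{H}_{\mu,\beta-q}:A_{\alpha-p}^p\to A_{\beta-2-q}^q$, and no detour through derivatives, auxiliary measures $d\mu'(t)=t\,d\mu(t)$, or finite-rank corrections is needed.
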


\begin{corollary}
Suppose that $1\leq p <\infty $, $\alpha>p-1$. Let $\mu$ be a positive Borel measure on $[0,1)$ which satisfies the condition in Theorem \ref{Th3.1}.
\begin{itemize}
\item [($\romannumeral1$)] If  $p > 1$, then $\mathcal{H}_{\mu,\alpha+2-p}$ is a bounded operator on $\mathcal{D}_\alpha^p$ if and only if $\mu$ is a $(\alpha+2-p)$-Carleson measure.
\item [($\romannumeral2$)] If $p = 1$, then $\mathcal{H}_{\mu,\alpha+2-p}$ is a bounded operator on $\mathcal{D}_\alpha^1$  if and only if $\mu$ is a 1-logarithmic $(\alpha+1)$-Carleson measure.
\end{itemize}
\end{corollary}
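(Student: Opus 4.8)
The plan is to obtain this statement as a direct specialization of the two results immediately preceding it, so that none of the duality or testing-function machinery has to be repeated. I would argue via the Dirichlet-to-Dirichlet corollary just above, taking the diagonal case $q=p$ together with $\beta=\alpha+2$. With these choices the target space $\mathcal{D}^q_{\beta-2}$ is exactly $\mathcal{D}^p_\alpha$; the operator $\mathcal{H}_{\mu,\beta-q}$ (resp.\ $\mathcal{H}_{\mu,\beta-1}$ when $q=1$) is exactly $\mathcal{H}_{\mu,\alpha+2-p}$; and the admissibility hypothesis $\beta>q+1$ reads precisely $\alpha>p-1$, so the hypotheses line up with those of the corollary to be proved.

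For part (i), with $p>1$, that corollary gives on the one hand that boundedness of $\mathcal{H}_{\mu,\alpha+2-p}$ on $\mathcal{D}^p_\alpha$ forces $\mu$ to be a $\left(\frac{\alpha+2}{p}+\frac{\alpha+2-p}{p'}-1\right)$-Carleson measure, and on the other hand that boundedness is guaranteed as soon as $\mu$ is a $\left(\max\{\alpha+2-p,\,\alpha+2-p\}\bigl(\tfrac1p+\tfrac1{p'}\bigr)\right)$-Carleson measure. The point worth recording is that in the diagonal case these two \emph{a priori} different exponents collapse to the same number: using $\tfrac1p+\tfrac1{p'}=1$ and $\tfrac{p}{p'}=p-1$,
$$\frac{\alpha+2}{p}+\frac{\alpha+2-p}{p'}-1=(\alpha+2)\Bigl(\frac1p+\frac1{p'}\Bigr)-\frac{p}{p'}-1=\alpha+2-p,$$
and trivially $\max\{\alpha+2-p,\alpha+2-p\}\bigl(\tfrac1p+\tfrac1{p'}\bigr)=\alpha+2-p$. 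Hence the necessary and the sufficient conditions coincide, so the implication upgrades to an equivalence, which is (i). For part (ii), with $p=q=1$, the same corollary gives at once that $\mathcal{H}_{\mu,\alpha+1}=\mathcal{H}_{\mu,\alpha+2-p}$ is bounded on $\mathcal{D}^1_\alpha$ if and only if $\mu$ is a $1$-logarithmic $\left(\frac{\alpha+2}{p}-1\right)$-Carleson measure, and $\frac{\alpha+2}{1}-1=\alpha+1$, which is the asserted condition.

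An equivalent route, which I would mention, is to use $A^p_\gamma=\mathcal{D}^p_{\gamma+p}$ (\cite[Theorem~4.28]{ZOT}) with $\gamma=\alpha-p$: since $\alpha>p-1$ we have $\gamma>-1$, so $\mathcal{D}^p_\alpha=A^p_{\alpha-p}$ with equivalent norms and $\mathcal{H}_{\mu,\alpha+2-p}=\mathcal{H}_{\mu,\gamma+2}$, and the Bergman-space corollary (the one on $\mathcal{H}_{\mu,\alpha+2}$ acting on $A^p_\alpha$) applied with weight $\gamma$ then yields the claim after substituting $\gamma+2=\alpha+2-p$. Along this route one only has to observe that the standing hypothesis ``$\mu$ satisfies the condition in Theorem~\ref{Th3.1}'' is meant relative to the Bergman weight $\alpha-p$, which is precisely the condition under which $\mathcal{H}_{\mu,\alpha+2-p}$ is well defined on $\mathcal{D}^p_\alpha=A^p_{\alpha-p}$, so the two formulations are consistent. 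Either way the proof is pure bookkeeping; the only place that deserves care is the index matching and, in particular, the collapse of the two Carleson exponents to the single value $\alpha+2-p$, which is what makes the equivalence sharp.
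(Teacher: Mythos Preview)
Your proposal is correct and matches the paper's approach: the corollary is stated without proof, immediately after the paper observes that $A_\alpha^p=\mathcal{D}_{\alpha+p}^p$ and records the Dirichlet version of Theorem~\ref{Th4.1}, so it is meant to follow by exactly the specialization $q=p$, $\beta=\alpha+2$ (equivalently, by applying Corollary~4.1 with weight $\alpha-p$) that you carry out. Your verification that the necessary and sufficient Carleson exponents collapse to the common value $\alpha+2-p$ is the one computation the paper leaves implicit, and you have done it correctly.
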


Then we discuss those measures $\mu$ for which $\mathcal{H}_{\mu,\beta}$ is a bounded operator from $\mathcal{D}_\alpha^p$ into $\mathcal{D}_{\beta-1}^q$.

\begin{theorem}\label{Th4.03}
Suppose that $q > 1$ and  $0<p\leq q <\infty$, $\alpha>p-1$. Let $\mu$ be a positive Borel measure on $[0,1)$ which satisfies the condition in Theorem \ref{Th3.1}. Then $\mathcal{H}_{\mu,\alpha+1-p}$ is a bounded operator from $\mathcal{D}_\alpha^p$ into $\mathcal{D}_{\alpha-p}^q$ if and only if $\mu$ is a $\left(\frac{\alpha+2-p}{p}+\frac{\alpha+2-p}{q'}\right)$-Carleson measure.
\end{theorem}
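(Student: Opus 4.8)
The strategy is to differentiate once, turning the claim into the Bergman-space statement Theorem~\ref{Th4.01}, which is already available. First I would use the two norm identifications made possible by $\alpha>p-1$ (hence $\alpha-p>-1$): on one hand $\mathcal{D}_\alpha^p=A_{\alpha-p}^p$ with comparable norms, by \cite[Theorem~4.28]{ZOT} (the same device used for the corollaries above); on the other hand $\mathcal{D}_{\alpha-p}^q$ is a bona fide Dirichlet space and, by definition, $\|h\|_{\mathcal{D}_{\alpha-p}^q}$ is comparable to $|h(0)|+\|h'\|_{A_{\alpha-p}^q}$. Thus $\mathcal{H}_{\mu,\alpha+1-p}\colon\mathcal{D}_\alpha^p\to\mathcal{D}_{\alpha-p}^q$ is bounded if and only if $|\mathcal{H}_{\mu,\alpha+1-p}(f)(0)|+\|(\mathcal{H}_{\mu,\alpha+1-p}(f))'\|_{A_{\alpha-p}^q}\leq C\|f\|_{A_{\alpha-p}^p}$ for every $f\in A_{\alpha-p}^p$.

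Next comes the key computation. Set $d\nu(t)=t\,d\mu(t)$, a positive Borel measure on $[0,1)$ with $\nu\leq\mu$, so that $\nu$ inherits the hypothesis of Theorem~\ref{Th3.1} relative to $A_{\alpha-p}^p$ and the identification $\mathcal{H}_{\nu,\alpha+2-p}=\mathcal{I}_{\nu,\alpha+2-p}$ holds there. Differentiating under the integral sign in $\mathcal{H}_{\mu,\beta}(f)(z)=\int_{[0,1)}f(t)(1-tz)^{-\beta}\,d\mu(t)$ (equivalently, applying the elementary identity $(m+1)\frac{\Gamma(m+1+\beta)}{(m+1)!\,\Gamma(\beta)}=\beta\frac{\Gamma(m+\beta+1)}{m!\,\Gamma(\beta+1)}$ to the power series), with $\beta=\alpha+1-p>0$, gives
\begin{align}
(\mathcal{H}_{\mu,\alpha+1-p}(f))'(z)&=(\alpha+1-p)\int_{[0,1)}\frac{f(t)}{(1-tz)^{\alpha+2-p}}\,d\nu(t) \notag\\
&=(\alpha+1-p)\,\mathcal{H}_{\nu,\alpha+2-p}(f)(z). \notag
\end{align}
The remaining term $\mathcal{H}_{\mu,\alpha+1-p}(f)(0)=\int_{[0,1)}f\,d\mu$ is harmless: it is dominated by $C\|f\|_{A_{\alpha-p}^p}$ in either direction of the proof — automatically in the necessity direction, and in the sufficiency direction because $\frac1p+\frac1{q'}\geq1$ (a consequence of $q\geq p$) forces the hypothesized Carleson exponent to be at least $\alpha+2-p$, which suffices by Lemma~\ref{le3.2} together with H\"{o}lder's inequality and the finiteness of $\mu$. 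Since $\alpha+1-p$ is a nonzero constant, we conclude that $\mathcal{H}_{\mu,\alpha+1-p}\colon\mathcal{D}_\alpha^p\to\mathcal{D}_{\alpha-p}^q$ is bounded if and only if $\mathcal{H}_{\nu,(\alpha-p)+2}\colon A_{\alpha-p}^p\to A_{\alpha-p}^q$ is bounded.

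Finally I would apply the first part of Theorem~\ref{Th4.01} with the weight $\alpha-p>-1$ replacing $\alpha$ and the measure $\nu$ replacing $\mu$ (the standing hypotheses $q\geq p$ and $q>1$ are precisely those imposed here): $\mathcal{H}_{\nu,(\alpha-p)+2}\colon A_{\alpha-p}^p\to A_{\alpha-p}^q$ is bounded if and only if $\nu$ is a $\left(\frac{(\alpha-p)+2}{p}+\frac{(\alpha-p)+2}{q'}\right)$-Carleson measure. To close, I note the elementary fact that, since $t\asymp1$ as $t\to1^-$ and $\mu$ is finite, $\nu([t,1))=\int_{[t,1)}s\,d\mu(s)$ obeys $t\,\mu([t,1))\leq\nu([t,1))\leq\mu([t,1))$, so $\nu$ and $\mu$ are $s$-Carleson measures on $[0,1)$ for exactly the same $s>0$; rewriting $\frac{(\alpha-p)+2}{p}+\frac{(\alpha-p)+2}{q'}=\frac{\alpha+2-p}{p}+\frac{\alpha+2-p}{q'}$ then yields the theorem.

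There is no hard analytic step here; the whole argument is a change of perspective plus bookkeeping. The point I would be most careful about — and the one I regard as the crux — is that after differentiating, the exponent $\alpha+2-p$ in the kernel is exactly two larger than the Bergman weight $\alpha-p$, so that Theorem~\ref{Th4.01} applies verbatim, and that the auxiliary measure $\nu=t\,d\mu$ keeps every property it needs, namely the Theorem~\ref{Th3.1} hypothesis and the Carleson exponent. Establishing the differentiation identity and the norm equivalence $\|h\|_{\mathcal{D}_{\alpha-p}^q}\asymp|h(0)|+\|h'\|_{A_{\alpha-p}^q}$ carefully is essentially all that the proof demands.
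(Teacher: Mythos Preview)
Your argument is correct and takes a genuinely different route from the paper's. The paper does not prove Theorem~\ref{Th4.03} directly; it deduces it from the more general Theorem~\ref{Th4.3}, whose proof works entirely at the level of Dirichlet spaces: using the duality $(\mathcal{D}_{\beta-1}^q)^*\cong\mathcal{D}_{\beta-1}^{q'}$ from Lemma~\ref{Le4.1}, the authors compute the pairing $\langle\mathcal{H}_{\mu,\beta}(f),g\rangle$ via Fubini and the reproducing formula to obtain the bilinear criterion \eqref{eq4.12}, then test with the usual families $f_a,g'_a$ for necessity and use Lemma~\ref{le3.2} plus H\"older for sufficiency. Theorem~\ref{Th4.03} then falls out as the special case $\beta=\alpha+1-p$, where the necessary and sufficient Carleson exponents coincide.

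You instead differentiate once, turning $\mathcal{H}_{\mu,\alpha+1-p}$ into $(\alpha+1-p)\,\mathcal{H}_{\nu,\alpha+2-p}$ with $d\nu=t\,d\mu$, invoke the norm identity $\mathcal{D}_\alpha^p=A_{\alpha-p}^p$ on the source, and feed the result directly into Theorem~\ref{Th4.01}(i) on the Bergman side. This is more economical for the specific statement: it treats Theorem~\ref{Th4.01} as a black box and avoids repeating the duality computation for Dirichlet spaces. The paper's approach, in turn, yields the broader Theorem~\ref{Th4.3} for arbitrary $\beta$, which your reduction does not. Your handling of the auxiliary pieces---$\nu\le\mu$ inheriting the Theorem~\ref{Th3.1} hypothesis, the equivalence of the $s$-Carleson condition for $\mu$ and $\nu$, and the bound on the constant term $\int f\,d\mu$ via Lemma~\ref{le3.2} (with the exponent $s=1+p/q'\ge p$ since $p\le q$) followed by H\"older against the finite mass---is all sound. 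One cosmetic remark: the ``norm equivalence'' $\|h\|_{\mathcal{D}_{\alpha-p}^q}\asymp|h(0)|+\|h'\|_{A_{\alpha-p}^q}$ you mention at the end is in fact the \emph{definition} of the Dirichlet norm, so no work is needed there.
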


The Theorem \ref{Th4.03} is also closely related to Theorem \ref{Th4.3}, thus it suffices to prove Theorem \ref{Th4.3}
\begin{theorem}\label{Th4.3}
Suppose that $q > 1$ and  $0<p\leq q <\infty$, $\alpha>p-1$ and $\beta>q$. Let $\mu$ be a positive Borel measure on $[0,1)$ which satisfies the condition in Theorem \ref{Th3.1}.
\begin{itemize}
\item [($\romannumeral1$)] If $\mathcal{H}_{\mu,\beta}$ is a bounded operator from $\mathcal{D}_\alpha^p$ into $\mathcal{D}_{\beta-1}^q$ then $\mu$ is a $\left(\frac{\alpha+2}{p}+\frac{\beta+1}{q'}-1\right)$-Carleson measure.
\item [($\romannumeral2$)] If $\mu$ is a $\left(\max\{\alpha+2-p,\beta+1\}(\frac{1}{p}+\frac{1}{q'})\right)$-Carleson measure then $\mathcal{H}_{\mu,\beta}$ is a bounded operator from $\mathcal{D}_\alpha^p$ into $\mathcal{D}_{\beta-1}^q$.
\end{itemize}
\end{theorem}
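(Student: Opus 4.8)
The plan is to follow the scheme of the proof of Theorem~\ref{Th4.1}, the only genuinely new ingredient being that, since the target is now a Dirichlet space, one must work with the \emph{derivative} of $\mathcal{H}_{\mu,\beta}(f)$. Two reductions are made at the outset. Since $\alpha>p-1$ we have $\mathcal{D}_\alpha^p=A_{\alpha-p}^p$ with equivalent norms (see \cite[Theorem~4.28]{ZOT}), so the input may be treated as an element of a weighted Bergman space, and $g\mapsto g'$ maps $\mathcal{D}_{\beta-1}^{q'}$ onto $A_{\beta-1}^{q'}$ (isometrically modulo the $g(0)$ term). Since $\mu$ satisfies the hypothesis of Theorem~\ref{Th3.1}, $\mathcal{H}_{\mu,\beta}(f)=\mathcal{I}_{\mu,\beta}(f)$, hence
$$\bigl(\mathcal{H}_{\mu,\beta}(f)\bigr)'(z)=\beta\int_{[0,1)}\frac{t\,f(t)}{(1-tz)^{\beta+1}}\,d\mu(t),\qquad z\in\mathbb{D}.$$
As in \eqref{eq4.1}, one first records an absolute-convergence bound so that the double integrals below may be handled by Fubini's theorem.

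The core step is a duality reduction parallel to \eqref{eq4.2}. Applying Lemma~\ref{Le4.1} to $\mathcal{D}_{\beta-1}^q$ with the free parameter $\gamma=\beta-1$ (so the pairing weight is exactly $(1-|z|^2)^{\beta-1}$), using the reproducing formula
$$\phi(t)=\beta\int_{\mathbb{D}}\frac{\phi(z)\,(1-|z|^2)^{\beta-1}}{(1-t\bar z)^{\beta+1}}\,dA(z),\qquad \phi\in A_{\beta-1}^1,$$
and interchanging the order of integration, I would obtain
$$\int_{\mathbb{D}}\overline{\bigl(\mathcal{H}_{\mu,\beta}(f)\bigr)'(z)}\,\phi(z)\,(1-|z|^2)^{\beta-1}\,dA(z)=\int_{[0,1)}t\,\overline{f(t)}\,\phi(t)\,d\mu(t).$$
The point-evaluation term $\mathcal{H}_{\mu,\beta}(f)(0)\,\overline{g(0)}$ in the Dirichlet pairing is harmless: by Theorem~\ref{Th3.1}, $|\mathcal{H}_{\mu,\beta}(f)(0)|\le\int_{[0,1)}|f(t)|\,d\mu(t)\le C\|f\|_{\mathcal{D}_\alpha^p}$, and it is killed by testing against $g$ with $g(0)=0$. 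Combining these, $\mathcal{H}_{\mu,\beta}\colon\mathcal{D}_\alpha^p\to\mathcal{D}_{\beta-1}^q$ is bounded if and only if there is $C>0$ with
$$\Bigl|\int_{[0,1)}\overline{f(t)}\,\phi(t)\,t\,d\mu(t)\Bigr|\le C\,\|f\|_{\mathcal{D}_\alpha^p}\,\|\phi\|_{A_{\beta-1}^{q'}},\qquad f\in\mathcal{D}_\alpha^p,\ \phi\in A_{\beta-1}^{q'}.$$

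For part~(i) I would test this bilinear inequality on
$$f_a(z)=\left(\frac{1-a^2}{(1-az)^2}\right)^{\frac{\alpha+2}{p}-1},\qquad \phi_a(z)=\left(\frac{1-a^2}{(1-az)^2}\right)^{\frac{\beta+1}{q'}},\qquad 0<a<1,$$
which by the usual Forelli--Rudin estimates have $\sup_{0<a<1}\|f_a\|_{\mathcal{D}_\alpha^p}<\infty$ and $\sup_{0<a<1}\|\phi_a\|_{A_{\beta-1}^{q'}}<\infty$; since $f_a,\phi_a>0$ on $[0,1)$, restricting the integral to $[a,1)$, where $1-at\le1-a^2$ and $t\ge a$, yields $\mu([a,1))\le C(1-a)^{\frac{\alpha+2}{p}+\frac{\beta+1}{q'}-1}$, i.e. $\mu$ is a $\bigl(\frac{\alpha+2}{p}+\frac{\beta+1}{q'}-1\bigr)$-Carleson measure. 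For part~(ii) I would set $s=1+p/q'$ and $s'=1+q'/p$; then $1/s+1/s'=1$, and $p\le q$ forces $s\ge p$ and $s'\ge q'$, so Lemma~\ref{le3.2} applies. If $\mu$ is a $\bigl(\max\{\alpha+2-p,\beta+1\}(\tfrac1p+\tfrac1{q'})\bigr)$-Carleson measure, then $\mu$ is simultaneously a $\tfrac{(\alpha+2-p)s}{p}$-Carleson measure and a $\tfrac{(\beta+1)s'}{q'}$-Carleson measure (both exponents equal $(\alpha+2-p)(\tfrac1p+\tfrac1{q'})$ and $(\beta+1)(\tfrac1p+\tfrac1{q'})$ respectively, hence are $\le$ the displayed one, and on $[0,1)$ a larger exponent is the stronger condition). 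Lemma~\ref{le3.2} then gives $\|f\|_{L^s(\mu)}\le C\|f\|_{A_{\alpha-p}^p}\le C'\|f\|_{\mathcal{D}_\alpha^p}$ and $\|\phi\|_{L^{s'}(\mu)}\le C\|\phi\|_{A_{\beta-1}^{q'}}$, and H\"older's inequality delivers the bilinear estimate, hence boundedness.

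I expect the main obstacle to be the Fubini/reproducing-formula step. Unlike in Theorem~\ref{Th4.1}, the auxiliary integral $\int_{\mathbb{D}}(1-|z|^2)^{\beta-1}|1-t\bar z|^{-(\beta+1)}\,dA(z)$ is only finite for each fixed $t$ (with logarithmic growth as $t\to1^-$), not uniformly bounded, so the absolute-convergence argument justifying the exchange of integrals needs care, and one must also confirm that the $\mathcal{D}_{\beta-1}^q$-pairing is normalized consistently and that the point-evaluation contributions are genuinely dominated by the $\mathcal{D}_\alpha^p$-norm (so that finiteness of the dual sup really yields membership in $\mathcal{D}_{\beta-1}^q$). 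Once this identity and its converse are in place, the rest is Carleson-measure bookkeeping, for which the standing hypotheses are exactly what is needed: $q>1$ for the duality $(A_{\beta-1}^q)^*\cong A_{\beta-1}^{q'}$, $\alpha>p-1$ for $\mathcal{D}_\alpha^p=A_{\alpha-p}^p$, $p\le q$ for the admissibility of $s,s'$ in Lemma~\ref{le3.2}, and $\beta>q$ (in particular $\beta-1>-1$) to keep the weight $(1-|z|^2)^{\beta-1}$ and the associated reproducing formula legitimate.
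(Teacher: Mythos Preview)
Your proposal is correct and follows essentially the same route as the paper: derive the bilinear identity \eqref{eq4.11} via the reproducing formula for $A_{\beta-1}$ applied to $g'$, reduce boundedness to the estimate \eqref{eq4.12} by Dirichlet duality, test against explicit families for~(i), and use the H\"older/Lemma~\ref{le3.2} argument with $s=1+p/q'$ for~(ii). The only cosmetic differences are that your test function $f_a=\bigl(\tfrac{1-a^2}{(1-az)^2}\bigr)^{(\alpha+2)/p-1}$ is the standard $A_{\alpha-p}^p$ test function (the paper instead writes down its antiderivative, normalized so that $f_a'$ is the $A_\alpha^p$ test function), and that you are more explicit than the paper about the $g(0)$ term and the Fubini justification---issues the paper simply passes over.
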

\begin{proof}
By using Fubini's theorem and  reproducing property imply that
\begin{align}\label{eq4.11}
\begin{split}
		&\int_{\mathbb{D}} \overline{\mathcal{H}_{\mu,\beta}(f)'(z)} g'( z)(1-|z|^2)^{\beta-1} d A(z)\\
		=&\int_{\mathbb{D}} \int_{[0,1)}t\beta \frac{\overline{f(t)} d \mu(t)}{(1- t \bar{z})^{\beta+1}} g'( z)(1-|z|^2)^{\beta-1} d A(z)\\
		=&\int_{[0,1)} \int_{\mathbb{D}} t\beta \frac{(1-|z|^2)^{\beta-1}g'( z)}{(1- t \bar{z})^{\beta+1}} d A(z) \overline{f(t)} d \mu(t) \\
		=& \frac{\beta}{\beta-1}\int_{[0,1)} t\overline{f(t)} g'(t) d \mu(t),\quad f\in \mathcal{D}_\alpha^p ,\;g'\in A_{\beta-1}^1.
	\end{split}
\end{align}

(\romannumeral1)First consider the case $0<p<\infty$, $q\geq 1$ and $q\geq p$. Using (\ref{eq4.11}) and the duality theorem for Lemma \ref{Le4.1}, take $(\mathcal{D}_{\beta-1}^q)^*\cong \mathcal{D}_{\beta-1}^{q'}$ and $(\mathcal{D}_{\beta-1}^{q'})^*\cong \mathcal{D}_{\beta-1}^q(q>1)$. we obtain that
$\mathcal{H}_{\mu,\beta}$ is a bounded operator from $\mathcal{D}_\alpha^p$ into $\mathcal{D}_{\beta-1}^q$ if and only if there exists a positive constant $C$ such that
\begin{align}\label{eq4.12}
 \left| \int_{[0,1)}t\overline{f(t)}g'(t) d\mu(t) \right| \leq C\|f\|_{\mathcal{D}_\alpha^p} \|g\|_{\mathcal{D}_{\beta-1}^{q'}}, \quad  f \in \mathcal{D}_\alpha^p, \; g'\in A_{\beta-1}^{q'}.
\end{align}

Assume that $\mathcal{H}_{\mu,\beta}$ is a bounded operator from $\mathcal{D}_\alpha^p$ into $\mathcal{D}_{\beta-1}^q$. Take the families of text functions
$$f_{a}(z)= \frac{1}{a}\frac{(1-a^{2})^{(\alpha+2)/p}}{(1-a z)^{2{(\alpha+2)/p}-1}} \quad \text{and} \quad g'_{a}(z)=\left(\frac{1-a^{2}}{(1-a z)^{2}}\right)^{(\beta+1)/q'},\quad 0<a<1.$$
A calculation shows that $\{f_a\} \subset \mathcal{D}_\alpha^p$, $\{g_a\} \subset \mathcal{D}_{\beta-1}^{q'}$ and
\begin{align}
\sup_{a \in [0,1)}||f||_{\mathcal{D}_\alpha^p} < \infty \quad and \quad \sup_{a \in [0,1)}||g||_{\mathcal{D}_{\beta-1}^{q'}} < \infty . \notag
\end{align}
It follows that
\begin{align}
  \infty & > C \;\sup_{a \in [0,1)}||f||_{\mathcal{D}_\alpha^p}\sup_{a \in [0,1)}||g||_{\mathcal{D}_{\beta-1}^{q'}}  \notag \\
   & \geq \left|\int_{[0,1)}tf_a(t)g_a'(t)d\mu(t) \right| \notag \\
   & \geq  \int_{[a,1)} \frac{t}{a}\frac{(1-a^{2})^{(\alpha+2)/p}}{(1-a t)^{2{(\alpha+2)/p}-1}} \left(\frac{1-a^2}{(1-at)^2}\right)^{(\beta+1)/q'}   d\mu(t)\notag \\
    & \geq \frac{1}{(1-a^2)^{(\alpha+2)/p+(\beta+1)/q'-1}} \mu([a,1)).
\end{align}
This is equivalent to saying that $\mu$ is a $\left(\frac{\alpha+2}{p}+\frac{\beta+1}{q'}-1\right)$-Carleson measure.

(\romannumeral2) On the other hand, if $\mu$ is a $\left(\max\{\alpha+2-p,\beta+1\}(\frac{1}{p}+\frac{1}{q'})\right)$-Carleson measure. Let $s=1+p/q'$. Then $s'=1+q'/p$. We find $A_\alpha^p = \mathcal{D}_{\alpha+p}^p$ which means $f \in A_{\alpha-p}^p$. By Lemma \ref{le3.2}, we obtain that there exists a constant $C>0$ such that
\begin{align}
	\left(\int_{[0,1)}\left|f(t)\right|^s d\mu(t)\right)^{1/s}\leq C\|f\|_{A_{(\alpha-p)}^p}=C\|f\|_{\mathcal{D}_{\alpha}^p},\quad \text{for~all~}f\in \mathcal{D}_{\alpha}^p,
\end{align}
and
\begin{align}		
	\left(\int_{[0,1)}\left|g'(t)\right|^{s'} d\mu(t)\right)^{1/s'}\leq C\|g'\|_{A_{\beta-1}^{q'}} \leq C\|g\|_{\mathcal{D}_{\beta-1}^{q'}},\quad \text{for~all~}g'\in A_{\beta-1}^{q'}.
\end{align}
Then by H\"{o}lder's inequality,
\begin{align*}
	\int_{[0,1)} \left|f(t)\right|\left| g'(t)\right| d \mu(t)&\leq \left(\int_{[0,1)}\left|f(t)\right|^s d\mu(t)\right)^{1/s}\left(\int_{[0,1)}\left|g'(t)\right|^{s'} d\mu(t)\right)^{1/s'}\\
&\leq C\|f\|_{\mathcal{D}_{\alpha}^p}\|g\|_{\mathcal{D}_{\beta-1}^{q'}}.
\end{align*}
Hence, (\ref{eq4.12}) holds and then $\mathcal{H}_{\mu,\beta}$ is a bounded operator from $\mathcal{D}_\alpha^p$ into $\mathcal{D}_{\beta-1}^q.$
\end{proof}

\begin{corollary}
Suppose that $1 < p < \infty $, $\alpha>p-1$. Let $\mu$ be a positive Borel measure on $[0,1)$ which satisfies the condition in Theorem \ref{Th3.1}. Then $\mathcal{H}_{\mu,\alpha+1-p}$ is a bounded operator from $\mathcal{D}_\alpha^p$ into $\mathcal{D}_{\alpha-p}^p$ if and only if $\mu$ is a $(\alpha+2-p)$-Carleson measure.
\end{corollary}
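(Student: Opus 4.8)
The plan is to deduce this corollary directly from Theorem~\ref{Th4.03} (equivalently, from Theorem~\ref{Th4.3}) by specializing the exponents to $q=p$. First I would check that all standing hypotheses of Theorem~\ref{Th4.03} hold with $q=p$: the requirement $q>1$ becomes $p>1$, which is assumed; $0<p\le q<\infty$ is trivially $0<p<\infty$; $\alpha>p-1$ is assumed verbatim (and is exactly what makes $\mathcal{D}_{\alpha-p}^{p}$ meaningful, recalling $A_{\alpha-p}^{p}=\mathcal{D}_{\alpha}^{p}$); and $\mu$ is assumed to satisfy the condition of Theorem~\ref{Th3.1}. With $q=p$ the target space $\mathcal{D}_{\alpha-p}^{q}$ becomes $\mathcal{D}_{\alpha-p}^{p}$ while the operator $\mathcal{H}_{\mu,\alpha+1-p}$ is unchanged, so Theorem~\ref{Th4.03} applies to exactly the operator in the statement.

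Next I would simplify the Carleson exponent furnished by Theorem~\ref{Th4.03}: that theorem says $\mathcal{H}_{\mu,\alpha+1-p}$ is bounded from $\mathcal{D}_\alpha^p$ into $\mathcal{D}_{\alpha-p}^{q}$ if and only if $\mu$ is a $\left(\frac{\alpha+2-p}{p}+\frac{\alpha+2-p}{q'}\right)$-Carleson measure. Since $q=p>1$, its conjugate index is $q'=p'$ and $\frac1p+\frac1{p'}=1$, so
$$\frac{\alpha+2-p}{p}+\frac{\alpha+2-p}{q'}=(\alpha+2-p)\left(\frac1p+\frac1{p'}\right)=\alpha+2-p.$$
Hence the condition becomes precisely that $\mu$ be an $(\alpha+2-p)$-Carleson measure, which is the assertion of the corollary.

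Alternatively one can bypass Theorem~\ref{Th4.03} and argue from Theorem~\ref{Th4.3} with $q=p$ and $\beta=\alpha+1-p$, so that $\beta-1=\alpha-p$ and $\beta+1=\alpha+2-p$: the necessary exponent $\frac{\alpha+2}{p}+\frac{\beta+1}{q'}-1$ collapses to $\frac{\alpha+2-p}{p}+\frac{\alpha+2-p}{p'}=\alpha+2-p$, and the sufficient exponent $\max\{\alpha+2-p,\beta+1\}\left(\frac1p+\frac1{q'}\right)$ collapses to $(\alpha+2-p)\cdot 1$ as well, the maximum being attained by either term. Since the necessary and sufficient exponents coincide, the equivalence follows. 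The only point needing a moment's care along this route is the constraint $\beta>q$ in Theorem~\ref{Th4.3}, i.e.\ $\alpha>2p-1$; this is precisely why invoking Theorem~\ref{Th4.03} --- whose statement requires only $\alpha>p-1$ --- is the cleaner choice. In any case there is no real obstacle here: the analytic substance (the duality/reproducing-kernel identity \eqref{eq4.11}, the test-function estimates, and the H\"older plus Carleson-measure argument) is already carried out in the proof of Theorem~\ref{Th4.3}, and the corollary is pure arithmetic on the exponents.
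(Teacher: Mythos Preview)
Your proposal is correct and takes essentially the same approach as the paper: the corollary is stated immediately after Theorem~\ref{Th4.3} (and Theorem~\ref{Th4.03}) without a separate proof, being an obvious specialization to $q=p$. Your observation that the route through Theorem~\ref{Th4.3} would impose the spurious constraint $\beta>q$, i.e.\ $\alpha>2p-1$, and that Theorem~\ref{Th4.03} is therefore the cleaner source, is a nice point the paper leaves implicit.
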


\begin{corollary}
Suppose that $q > 1$ and $0<p\leq q <\infty$, $\alpha>-1$ and $\beta>0$. Let $\mu$ be a positive Borel measure on $[0,1)$ which satisfies the condition in Theorem \ref{Th3.1}.
\begin{itemize}
\item [($\romannumeral1$)] If $\mathcal{H}_{\mu,\beta+q}$ is a bounded operator from $A_\alpha^p$ into $A_{\beta-1}^q$ then $\mu$ is a $\left(\frac{\alpha+2}{p}+\frac{\beta+1+q}{q'}\right)$-Carleson measure.
\item [($\romannumeral2$)] If $\mu$ is a $\left(\max\{\alpha+2,\beta+1+q\}(\frac{1}{p}+\frac{1}{q'})\right)$-Carleson measure then $\mathcal{H}_{\mu,\beta+q}$ is a bounded operator from $A_\alpha^p$ into $A_{\beta-1}^q$.
\end{itemize}
\end{corollary}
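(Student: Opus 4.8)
The plan is to deduce this corollary from Theorem \ref{Th4.3} by invoking the classical identification between weighted Bergman spaces and Dirichlet type spaces. Recall from \cite[Theorem 4.28]{ZOT} that for $0<p<\infty$ and $\gamma>-1$ one has $A_\gamma^p=\mathcal{D}_{\gamma+p}^p$ with equivalence of norms, $\|f\|_{A_\gamma^p}\asymp\|f\|_{\mathcal{D}_{\gamma+p}^p}$. Setting $\tilde{\alpha}=\alpha+p$ and $\tilde{\beta}=\beta+q$, this gives
$$A_\alpha^p=\mathcal{D}_{\tilde{\alpha}}^p,\qquad A_{\beta-1}^q=\mathcal{D}_{\tilde{\beta}-1}^q,$$
so that $\mathcal{H}_{\mu,\beta+q}$ acts boundedly from $A_\alpha^p$ into $A_{\beta-1}^q$ if and only if it acts boundedly from $\mathcal{D}_{\tilde{\alpha}}^p$ into $\mathcal{D}_{\tilde{\beta}-1}^q$; the operator is literally the same analytic operator, since its action on power series in (\ref{eq1.1}) does not refer to the ambient space.

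Next I would verify that the hypotheses of Theorem \ref{Th4.3} hold for the parameters $p$, $q$, $\tilde{\alpha}$, $\tilde{\beta}$. We have $q>1$ and $0<p\le q<\infty$ directly; $\tilde{\alpha}=\alpha+p>p-1$ because $\alpha>-1$; and $\tilde{\beta}=\beta+q>q$ because $\beta>0$. Moreover $\mu$ satisfies the condition in Theorem \ref{Th3.1} by assumption, and $\tilde{\beta}=\beta+q$ is precisely the index of the Hankel operator appearing in the statement. Hence Theorem \ref{Th4.3} applies verbatim with these parameters.

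Finally I would translate the two Carleson exponents produced by Theorem \ref{Th4.3}. For part ($\romannumeral1$), the exponent $\frac{\tilde{\alpha}+2}{p}+\frac{\tilde{\beta}+1}{q'}-1$ becomes $\frac{\alpha+p+2}{p}+\frac{\beta+q+1}{q'}-1=\frac{\alpha+2}{p}+1+\frac{\beta+1+q}{q'}-1=\frac{\alpha+2}{p}+\frac{\beta+1+q}{q'}$, which is the claimed exponent. For part ($\romannumeral2$), one has $\tilde{\alpha}+2-p=\alpha+2$ and $\tilde{\beta}+1=\beta+1+q$, so $\max\{\tilde{\alpha}+2-p,\tilde{\beta}+1\}\left(\frac1p+\frac1{q'}\right)=\max\{\alpha+2,\beta+1+q\}\left(\frac1p+\frac1{q'}\right)$, again as claimed. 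There is no genuine obstacle here; the only points deserving a word of care are the equivalence (rather than equality) of norms in the Bergman--Dirichlet identification, so that boundedness transfers with a harmless constant, and the bookkeeping of the shifted indices — both entirely routine.
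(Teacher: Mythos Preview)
Your proof is correct and follows precisely the approach intended by the paper: the corollary sits immediately after Theorem \ref{Th4.3} and, as with the other corollaries in Sections~\ref{s4} and~\ref{s5}, is obtained by the identification $A_\gamma^p=\mathcal{D}_{\gamma+p}^p$ \cite[Theorem 4.28]{ZOT}, applied here with $\tilde{\alpha}=\alpha+p$ and $\tilde{\beta}=\beta+q$. Your parameter checks and exponent translations are accurate, and the only care point you flag---that the norm equivalence (not equality) still transfers boundedness---is exactly right.
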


\begin{corollary}
Suppose that $1 < p < \infty $, $\alpha>p-1$. Let $\mu$ be a positive Borel measure on $[0,1)$ which satisfies the condition in Theorem \ref{Th3.1}. Then $\mathcal{H}_{\mu,\alpha+1}$ is a bounded operator from $A_\alpha^p$ into $A_{\alpha-p}^p$ if and only if $\mu$ is a $(\alpha+2)$-Carleson measure.
\end{corollary}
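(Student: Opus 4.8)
The plan is to obtain this Corollary as the ``diagonal'' case $q=p$ of Theorem \ref{Th4.3} (it can equally be read off the preceding Corollary with $q=p$). First I would pass from Bergman to Dirichlet spaces via the identity $A_\gamma^p=\mathcal{D}_{\gamma+p}^p$ recalled above: with comparable norms, $\gamma=\alpha$ gives $A_\alpha^p=\mathcal{D}_{\alpha+p}^p$, and $\gamma=\alpha-p$ gives $A_{\alpha-p}^p=\mathcal{D}_\alpha^p$, where the latter is legitimate because $\alpha>p-1$ forces $\alpha-p>-1$. Therefore $\mathcal{H}_{\mu,\alpha+1}\colon A_\alpha^p\to A_{\alpha-p}^p$ coincides, as a map between the very same spaces, with $\mathcal{H}_{\mu,\alpha+1}\colon\mathcal{D}_{\alpha+p}^p\to\mathcal{D}_\alpha^p$, so boundedness of one is equivalent to boundedness of the other.

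Next I would apply Theorem \ref{Th4.3} with its parameters $q,\alpha,\beta$ replaced by $p,\ \alpha+p,\ \alpha+1$ respectively. All hypotheses hold: $q=p>1$ and $0<p\le q$; $\alpha+p>p-1$ since $\alpha>p-1\ge0$; $\beta=\alpha+1>p=q$ since $\alpha>p-1$; and $\mu$ satisfies the condition of Theorem \ref{Th3.1} by assumption. Part (i) then shows that boundedness forces $\mu$ to be a $\bigl(\tfrac{(\alpha+p)+2}{p}+\tfrac{(\alpha+1)+1}{p'}-1\bigr)$-Carleson measure, and since $q'=p'$ and $\tfrac{1}{p}+\tfrac{1}{p'}=1$ this exponent simplifies to $\tfrac{\alpha+2}{p}+\tfrac{\alpha+2}{p'}=\alpha+2$. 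Part (ii) shows that if $\mu$ is a $\bigl(\max\{(\alpha+p)+2-p,\,(\alpha+1)+1\}(\tfrac{1}{p}+\tfrac{1}{p'})\bigr)$-Carleson, that is an $(\alpha+2)$-Carleson, measure, then $\mathcal{H}_{\mu,\alpha+1}$ is bounded. The necessary and the sufficient conditions coincide, which yields the stated equivalence.

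I do not expect a genuine obstacle: all the work sits in Theorem \ref{Th4.3}, and the only point worth checking is precisely that, when $p=q$, the necessary exponent from part (i) and the sufficient exponent from part (ii) both collapse to the common value $\alpha+2$ --- which is exactly what upgrades the two one-sided implications of Theorem \ref{Th4.3} into an ``if and only if''. Should a self-contained argument be preferred, one may simply rerun the proof of Theorem \ref{Th4.3} with these parameters: duality pairing \eqref{eq4.11} with $\beta=\alpha+1$ against $\mathcal{D}_\alpha^p$ together with test functions of the type $f_a,g_a'$ for the necessity, and H\"older's inequality with exponent $s=1+p/p'=p$ together with Lemma \ref{le3.2} for the sufficiency, the two applications of Lemma \ref{le3.2} both being met once $\mu$ is an $(\alpha+2)$-Carleson measure. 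No new ingredient is required.
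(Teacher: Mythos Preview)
Your proposal is correct and matches the paper's intended derivation: the corollary is stated without proof precisely because it is the $q=p$ specialization of the preceding Corollary (equivalently, of Theorem \ref{Th4.3}) via the identification $A_\gamma^p=\mathcal{D}_{\gamma+p}^p$, and your computation showing that both the necessary exponent from part (i) and the sufficient exponent from part (ii) collapse to $\alpha+2$ is exactly the point.
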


\section{ Compactness of  $\mathcal{H}_{\mu,\beta}$ acting on Weighted Bergman spaces and Dirichlet spaces}\label{s5}
In this section we characterize the compactness of the
Generalized Hilbert $\mathcal{H}_{\mu,\beta}$. We begin with the following Lemma \ref{Le5.1} which is useful to deal with the compactness.

\begin{lemma}\label{Le5.1}
For $0 < p < \infty $ and $0 < q < \infty$, $\alpha>-1$ and $\beta>1$. Suppose that $\mathcal{H}_{\mu,\beta}$ is a bounded operator from $A_\alpha^p$ into $A_{\beta-2}^q$. Then $\mathcal{H}_{\mu,\beta}$ is a compact operator if and only if for any bounded sequence $\{f_n\}$ in $A_\alpha^p$ which converges uniformly to 0 on every compact subset of  $\mathbb{D}$, we have $\mathcal{H}_{\mu,\beta}(f_n) \rightarrow 0$ in $A_{\beta-2}^q$, for Dirichlet spaces also have an analogous conclusion.
\end{lemma}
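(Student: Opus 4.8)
This is the standard sequential description of compactness adapted to $\mathcal{H}_{\mu,\beta}$, and I would prove the two implications separately, carrying out the argument for $\mathcal{H}_{\mu,\beta}\colon A_\alpha^p\to A_{\beta-2}^q$ and transferring it to the Dirichlet setting by replacing $f$ with $f'$, using the identity $\mathcal{D}_\alpha^p=A_{\alpha-p}^p$ and the representation of $\mathcal{H}_{\mu,\beta}(f)'$ implicit in (\ref{eq4.11}). The only analytic input is that point evaluations are bounded, $|h(z)|\le C\,\|h\|_{A_\alpha^p}\,(1-|z|^2)^{-(\alpha+2)/p}$ and likewise on $A_{\beta-2}^q$, so that norm convergence implies uniform convergence on compact subsets of $\mathbb{D}$ and norm bounded families are normal.

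For the sufficiency, let $\{h_n\}$ be bounded in $A_\alpha^p$. By Montel's theorem a subsequence $\{h_{n_k}\}$ converges uniformly on compacta to some $h\in H(\mathbb{D})$, and Fatou's lemma gives $h\in A_\alpha^p$. Then $h_{n_k}-h$ is bounded in $A_\alpha^p$ and tends to $0$ uniformly on compacta, so by hypothesis $\mathcal{H}_{\mu,\beta}(h_{n_k}-h)\to 0$ in $A_{\beta-2}^q$, hence $\mathcal{H}_{\mu,\beta}(h_{n_k})\to \mathcal{H}_{\mu,\beta}(h)$ in $A_{\beta-2}^q$ by linearity. Since every bounded sequence has an image with a norm convergent subsequence, $\mathcal{H}_{\mu,\beta}$ is compact.

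For the necessity, assume $\mathcal{H}_{\mu,\beta}$ is compact and let $\{f_n\}$ be bounded in $A_\alpha^p$ with $f_n\to 0$ uniformly on compacta. I argue by contradiction: if $\mathcal{H}_{\mu,\beta}(f_n)\not\to 0$ in $A_{\beta-2}^q$, pass to a subsequence with $\|\mathcal{H}_{\mu,\beta}(f_{n_k})\|_{A_{\beta-2}^q}\ge\delta>0$; compactness yields a further subsequence with $\mathcal{H}_{\mu,\beta}(f_{n_{k_j}})\to g$ in $A_{\beta-2}^q$, so $\|g\|_{A_{\beta-2}^q}\ge\delta$ and $g\not\equiv 0$. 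Norm convergence in $A_{\beta-2}^q$ forces locally uniform convergence, so $g(z)=\lim_j\mathcal{H}_{\mu,\beta}(f_{n_{k_j}})(z)$ for each $z$, and it is enough to show $\mathcal{H}_{\mu,\beta}(f_{n_{k_j}})(z)\to 0$ pointwise, which contradicts $g\not\equiv 0$. Using the integral representation from Theorem \ref{Th3.1}, I would split $\int_{[0,1)}\frac{f(t)}{(1-tz)^\beta}\,d\mu(t)$ over $[0,r]$ and $(r,1)$: the $[0,r]$ part tends to $0$ as $j\to\infty$ since $f_{n_{k_j}}\to 0$ uniformly there and $\mu([0,r])<\infty$, and the $(r,1)$ part is at most $(1-|z|)^{-\beta}\int_{(r,1)}|f_{n_{k_j}}(t)|\,d\mu(t)$.

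The step I expect to be the main obstacle is controlling this last tail uniformly in $j$: a norm bounded, uniformly-on-compacta-null sequence need not satisfy $\int_{[0,1)}|f_n|\,d\mu\to 0$ when $\mu$ is merely a Carleson measure, so compactness has to enter here. For $p>1$ the quickest route bypasses the above: $A_\alpha^p$ is reflexive, so a bounded sequence converging to $0$ uniformly on compacta is weakly null, and a compact operator sends weakly null sequences to norm null ones, giving $\mathcal{H}_{\mu,\beta}(f_n)\to 0$ in $A_{\beta-2}^q$ at once. In general I would first show, by testing $\mathcal{H}_{\mu,\beta}$ against the normalized kernels $f_a(z)=\bigl(\tfrac{1-a^2}{(1-az)^2}\bigr)^{(\alpha+2)/p}$ (bounded in $A_\alpha^p$ and tending to $0$ uniformly on compacta as $a\to1^-$) exactly as in the necessity arguments of Section \ref{s4}, that compactness of $\mathcal{H}_{\mu,\beta}$ forces $\mu$ to be a \emph{vanishing} Carleson measure of the order for which $A_\alpha^p$ embeds into $L^1(\mu)$; then by (\ref{eq2.1}) the embedding norms $\mathcal{N}(\mu_r)$ tend to $0$, so $\int_{(r,1)}|f_{n_{k_j}}(t)|\,d\mu(t)\le \mathcal{N}(\mu_r)\sup_n\|f_n\|_{A_\alpha^p}$ is arbitrarily small uniformly in $j$ for $r$ near $1$. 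Combining the two estimates gives $\mathcal{H}_{\mu,\beta}(f_{n_{k_j}})(z)\to 0$, hence $g\equiv 0$, the desired contradiction. The Dirichlet statement is handled identically after the substitutions indicated at the outset.
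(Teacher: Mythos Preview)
The paper does not actually prove Lemma \ref{Le5.1}; it only says the argument is similar to \cite[Proposition 3.11]{CCO} and omits all details. Your proposal is therefore far more substantive than what the paper supplies, and your sufficiency argument (Montel plus Fatou) as well as the reflexivity shortcut for the necessity direction when $p>1$ are correct and standard.

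There is, however, a genuine circularity in your general-$p$ necessity argument. You propose to control the tail $\int_{(r,1)}|f_{n_{k_j}}(t)|\,d\mu(t)$ by first showing, via the test family $f_a$, that compactness of $\mathcal{H}_{\mu,\beta}$ forces $\mu$ to be a \emph{vanishing} Carleson measure, and then invoking (\ref{eq2.1}). But the test-function computation in Section \ref{s4} only uses boundedness and yields the ordinary (non-vanishing) Carleson condition; to upgrade it to the vanishing condition you need $\|\mathcal{H}_{\mu,\beta}(f_{a_n})\|_{A_{\beta-2}^q}\to 0$, and the paper's derivation of this (the necessity half of Theorem \ref{Th5.1}) invokes precisely Lemma \ref{Le5.1}. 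Independently of the paper's structure, extracting $\|\mathcal{H}_{\mu,\beta}(f_{a_n})\|\to 0$ from compactness alone requires identifying every norm-subsequential limit of $\mathcal{H}_{\mu,\beta}(f_{a_n})$ as zero, which again reduces to showing $\mathcal{H}_{\mu,\beta}(f_{a_n})(z)\to 0$ pointwise---exactly the step you are trying to justify for a general bounded locally-uniformly-null sequence. The Cowen--MacCluer scheme works for composition operators because $C_\phi f_n(z)=f_n(\phi(z))\to 0$ is immediate from locally uniform convergence; for $\mathcal{H}_{\mu,\beta}$ that pointwise continuity has to be established directly (for instance by identifying the functional $f\mapsto \mathcal{H}_{\mu,\beta}(f)(z)$ in $(A_\alpha^p)^*$ and checking it is continuous for bounded locally uniform convergence), not bootstrapped from the vanishing Carleson conclusion you ultimately want to use the lemma to prove.
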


The proof is similar to that of \cite[Proposition 3.11]{CCO}. We omit the details.

\begin{theorem}\label{Th5.01}
Suppose that $0<p <\infty $, $\alpha>-1$ . Let $\mu$ be a positive Borel measure on $[0,1)$ which satisfies the condition in Theorem \ref{Th3.1}.
\begin{itemize}
\item [($\romannumeral1$)] If  $q\geq p$ and $q > 1$, then $\mathcal{H}_{\mu,\alpha+2}$ is a compact operator from $A_\alpha^p$ into $A_\alpha^q$ if and only if  $\mu$ is a vanishing $\left(\frac{\alpha+2}{p}+\frac{\alpha+2}{q'}\right)$-Carleson measure.
\item [($\romannumeral2$)] If  $q\geq p$ and $q = 1$, then $\mathcal{H}_{\mu,\alpha+2}$ is a compact operator from $A_\alpha^p$ into $A_{\alpha}^1$ if and only if $\mu$ is a vanishing 1-logarithmic $(\frac{\alpha+2}{p})$-Carleson measure.
\end{itemize}
\end{theorem}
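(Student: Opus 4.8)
The plan is to run the proof of Theorem~\ref{Th4.1} with $\beta=\alpha+2$ (so that $A_{\beta-2}^q=A_\alpha^q$ and the pairing (\ref{eq4.2}) is available with weight $(1-|z|^2)^\alpha$), replacing ``bounded'' by ``compact'' and every Carleson condition by its vanishing counterpart, and feeding everything through Lemma~\ref{Le5.1}. Observe first that when $\beta=\alpha+2$ the two exponents occurring in Theorem~\ref{Th4.1}(i) coincide: $\frac{\alpha+2}{p}+\frac{\beta}{q'}=\max\{\alpha+2,\beta\}(\frac1p+\frac1{q'})=:s_0=\frac{\alpha+2}{p}+\frac{\alpha+2}{q'}$, which is exactly why the statement is an honest ``if and only if''. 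Since the norm estimates for all the test functions below were already verified in Section~\ref{s4}, I will only indicate how the vanishing condition enters.

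For necessity in (i): assume $\mathcal{H}_{\mu,\alpha+2}$ is compact from $A_\alpha^p$ into $A_\alpha^q$, and use the families $f_a(z)=\big(\tfrac{1-a^2}{(1-az)^2}\big)^{(\alpha+2)/p}$ and $g_a(z)=\big(\tfrac{1-a^2}{(1-az)^2}\big)^{(\alpha+2)/q'}$ from the proof of Theorem~\ref{Th4.1}. These are bounded in $A_\alpha^p$ and $A_\alpha^{q'}$ respectively, and since $\tfrac{1-a^2}{(1-az)^2}\to0$ uniformly on compact subsets of $\mathbb{D}$ as $a\to1^-$, we have $f_a\to0$ uniformly on compacta. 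Lemma~\ref{Le5.1} then yields $\|\mathcal{H}_{\mu,\alpha+2}(f_a)\|_{A_\alpha^q}\to0$, so by (\ref{eq4.2}) and H\"{o}lder's inequality
$$\left|\int_{[0,1)}\overline{f_a(t)}g_a(t)\,d\mu(t)\right|\le C\,\|\mathcal{H}_{\mu,\alpha+2}(f_a)\|_{A_\alpha^q}\,\|g_a\|_{A_\alpha^{q'}}\longrightarrow0 .$$
Combining with the lower bound $\int_{[0,1)}\overline{f_a(t)}g_a(t)\,d\mu(t)\ge(1-a^2)^{-s_0}\mu([a,1))$ already computed in Section~\ref{s4} forces $\mu([a,1))=o((1-a)^{s_0})$, i.e. $\mu$ is a vanishing $s_0$-Carleson measure. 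Part (ii) is the same argument using instead $g_a(z)=\log\frac{2}{1-az}\in\mathcal{B}_0$, the pairing (\ref{eq4.7}), (\ref{eq4.8}), and the lower bound $\frac{\log\frac2{1-a}}{(1-a)^{(\alpha+2)/p}}\mu([a,1))$, giving the vanishing $1$-logarithmic $(\tfrac{\alpha+2}{p})$-Carleson condition.

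For sufficiency in (i): let $\{f_n\}\subset A_\alpha^p$ be bounded with $f_n\to0$ uniformly on compacta. By $(A_\alpha^q)^*\cong A_\alpha^{q'}$ and (\ref{eq4.2}) it suffices to show $\sup_{\|g\|_{A_\alpha^{q'}}\le1}\int_{[0,1)}|f_n(t)||g(t)|\,d\mu(t)\to0$. Fix $r\in(0,1)$ and split over $[0,r)$ and $[r,1)$. On $[0,r)$: $\int_{[0,r)}|f_n||g|\,d\mu\le\big(\sup_{[0,r)}|f_n|\big)\int_{[0,1)}|g|\,d\mu\le C\big(\sup_{[0,r)}|f_n|\big)\|g\|_{A_\alpha^{q'}}$ by Lemma~\ref{le3.2} for the embedding $A_\alpha^{q'}\hookrightarrow L^1(\mu)$ (valid since $s_0\ge\tfrac{\alpha+2}{q'}$), which $\to0$ as $n\to\infty$ for each fixed $r$. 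On $[r,1)$: with $s=1+p/q'$, $s'=1+q'/p$ as before, H\"{o}lder and Lemma~\ref{le3.2} applied to $\mu_r$ give $\int_{[r,1)}|f_n||g|\,d\mu\le\big(\int|f_n|^s\,d\mu_r\big)^{1/s}\big(\int|g|^{s'}\,d\mu_r\big)^{1/s'}\le C\,\mathcal{N}(\mu_r)\,\|f_n\|_{A_\alpha^p}\|g\|_{A_\alpha^{q'}}$, where the embedding exponents are precisely $\tfrac{(2+\alpha)s}{p}=\tfrac{(2+\alpha)s'}{q'}=s_0$ (here $p\le q$ guarantees $p\le s$ and $q'\le s'$), and $\mathcal{N}(\mu_r)\to0$ as $r\to1^-$ by (\ref{eq2.1}) since $\mu$ is vanishing $s_0$-Carleson. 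Given $\varepsilon>0$, choose $r$ so the $[r,1)$ term is $<\varepsilon$ for all $n$ and all $g$ in the unit ball, then $N$ so the $[0,r)$ term is $<\varepsilon$ for $n>N$; Lemma~\ref{Le5.1} concludes. Part (ii) proceeds identically with $\mathcal{B}_0$ replacing $A_\alpha^{q'}$, using $|g(t)|\le C\|g\|_{\mathcal{B}}\log\frac2{1-t}$ and the measure $d\nu(t):=\log\frac2{1-t}\,d\mu(t)$, which is a vanishing $(\tfrac{\alpha+2}{p})$-Carleson measure by the vanishing form of \cite[Proposition~2.5]{CCO}; one then only needs $A_\alpha^p\hookrightarrow L^1(\nu_r)$ with $\mathcal{N}(\nu_r)\to0$.

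The routine parts are the test-function norm computations (already in Section~\ref{s4}) and the bookkeeping of Carleson exponents. The point deserving genuine care is the uniformity in the sufficiency direction: the tail over $[r,1)$ must be made small \emph{simultaneously} for every $f_n$ in the bounded sequence and every $g$ in the dual unit ball, \emph{before} sending $n\to\infty$ on the compact part $[0,r)$ — this is exactly what the quantitative statement (\ref{eq2.1}), $\mathcal{N}(\mu_r)\to0$, supplies, as opposed to mere finiteness of $\mathcal{N}(\mu)$. For (ii) the only additional subtlety is transferring the vanishing $1$-logarithmic condition on $\mu$ to the vanishing Carleson condition on $\nu$.
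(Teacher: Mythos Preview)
Your proposal is correct and follows essentially the same route as the paper: the paper derives Theorem~\ref{Th5.01} as the special case $\beta=\alpha+2$ of Theorem~\ref{Th5.1}, whose proof proceeds exactly as you outline (test functions $f_{a_n},g_{a_n}$ plus Lemma~\ref{Le5.1} for necessity; the $[0,r)$/$[r,1)$ splitting with $\mathcal{N}(\mu_r)\to0$ for sufficiency; and the $\mathcal{B}_0$ pairing with $d\nu=\log\frac{2}{1-t}\,d\mu$ for part~(ii)). Your explicit remark that the two Carleson exponents in Theorem~\ref{Th4.1}(i) collapse to $s_0$ when $\beta=\alpha+2$, and your care about uniformity in $g$ on the $[0,r)$ piece, are in fact more transparent than the paper's own write-up.
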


 The Theorem \ref{Th5.01} is closely related to Theorem \ref{Th5.1}, thus it suffices to prove Theorem \ref{Th5.1}

\begin{theorem}\label{Th5.1}
Suppose that $0<p <\infty $, $\alpha>-1$ and $\beta>1$. Let $\mu$ be a positive Borel measure on $[0,1)$ which satisfies the condition in Theorem \ref{Th3.1}.
\begin{itemize}
\item [($\romannumeral1$)] If  $q\geq p$ and $q > 1$, if $\mathcal{H}_{\mu,\beta}$ is a compact operator from $A_\alpha^p$ into $A_{\beta-2}^q$ then $\mu$ is a vanishing $\left(\frac{\alpha+2}{p}+\frac{\beta}{q'}\right)$-Carleson measure, conversely if $\mu$ is a vanishing $\left(\max\{\alpha+2,\beta\}(\frac{1}{p}+\frac{1}{q'})\right)$-Carleson measure then $\mathcal{H}_{\mu,\beta}$ is a compact operator from $A_\alpha^p$ into $A_{\beta-2}^q$.
\item [($\romannumeral2$)] If  $q\geq p$ and $q = 1$, then $\mathcal{H}_{\mu,\beta}$ is a compact operator from $A_\alpha^p$ into $A_{\beta-2}^1$ if and only if $\mu$ is a vanishing 1-logarithmic $(\frac{\alpha+2}{p})$-Carleson measure.
\end{itemize}
\end{theorem}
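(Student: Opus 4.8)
The plan is to run the argument of Theorem \ref{Th4.1} in tandem with the sequential compactness criterion of Lemma \ref{Le5.1}. As a preliminary remark, a vanishing $s$-Carleson (resp. vanishing $1$-logarithmic $s$-Carleson) measure is in particular an $s$-Carleson (resp. $1$-logarithmic $s$-Carleson) measure, so in each of the two situations Theorem \ref{Th4.1} already gives boundedness of $\mathcal{H}_{\mu,\beta}$ and Lemma \ref{Le5.1} applies. Throughout I would keep the bilinear identity (\ref{eq4.2}) and, when $q=1$, the pairing (\ref{eq4.7}) together with the identification of $A_{\beta-2}^1$ as the dual of $\mathcal{B}_0$; these reduce every statement about $\mathcal{H}_{\mu,\beta}$ to an estimate for $\int_{[0,1)}\overline{f(t)}g(t)\,d\mu(t)$ against suitable $g$.

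For the necessity of the conditions I would reuse the test functions from the proof of Theorem \ref{Th4.1}: in case (\romannumeral1) the functions $f_a(z)=\bigl((1-a^2)/(1-az)^2\bigr)^{(\alpha+2)/p}$ and $g_a(z)=\bigl((1-a^2)/(1-az)^2\bigr)^{\beta/q'}$, and in case (\romannumeral2) the same $f_a$ together with $g_a(z)=\log\frac{2}{1-az}$. The new observation is that, as $a\to1^-$, the family $\{f_a\}$ stays bounded in $A_\alpha^p$ and converges to $0$ uniformly on compact subsets of $\mathbb{D}$; hence, if $\mathcal{H}_{\mu,\beta}$ is compact, Lemma \ref{Le5.1} forces $\|\mathcal{H}_{\mu,\beta}(f_a)\|_{A_{\beta-2}^q}\to0$. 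Pairing with $g_a$ through (\ref{eq4.2}) (resp. (\ref{eq4.7})) and using that $\sup_a\|g_a\|_{A_{\beta-2}^{q'}}<\infty$ (resp. $\sup_a\|g_a\|_{\mathcal{B}}<\infty$), the very chain of lower bounds displayed in Theorem \ref{Th4.1} now yields
$$\frac{\mu([a,1))}{(1-a^2)^{(\alpha+2)/p+\beta/q'}}\longrightarrow0\qquad\Bigl(\text{resp. }\ \frac{\log\frac{2}{1-a}}{(1-a)^{(\alpha+2)/p}}\,\mu([a,1))\longrightarrow0\Bigr),$$
which is precisely the vanishing (resp. vanishing $1$-logarithmic) Carleson condition.

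For sufficiency, let $\{f_n\}$ be bounded in $A_\alpha^p$ with $f_n\to0$ uniformly on compacta; I must show $\mathcal{H}_{\mu,\beta}(f_n)\to0$ in $A_{\beta-2}^q$. By the duality used in Theorem \ref{Th4.1}, this amounts to showing that $\sup\{\,|\int_{[0,1)}\overline{f_n}g\,d\mu|:\|g\|_{A_{\beta-2}^{q'}}\le1\,\}\to0$ when $q>1$, and the analogous supremum over $g\in\mathcal{B}_0$ with $\|g\|_{\mathcal{B}}\le1$ when $q=1$. Fix $\varepsilon>0$ and split $d\mu=d\mu|_{[0,r)}+d\mu_r$. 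On $[0,r)$ one has $|f_n(t)|\le\sup_{|z|\le r}|f_n(z)|$, while $\int_{[0,r)}|g|\,d\mu$ (resp. $\int_{[0,r)}|g|\log\frac{2}{1-t}\,d\mu$) is at most $C\|g\|$ because $\mu$ is, a fortiori, a large Carleson measure and Lemma \ref{le3.2} gives the corresponding embedding; so this piece tends to $0$ as $n\to\infty$ for $r$ fixed. On the tail $[r,1)$, when $q>1$ I would apply H\"older with $s=1+p/q'$, $s'=1+q'/p$ exactly as in Theorem \ref{Th4.1}: since $\mu$ is vanishing $\max\{\alpha+2,\beta\}(\tfrac1p+\tfrac1{q'})$-Carleson it is also vanishing $(\alpha+2)(\tfrac1p+\tfrac1{q'})$- and vanishing $\beta(\tfrac1p+\tfrac1{q'})$-Carleson, so by Lemma \ref{le3.2} and (\ref{eq2.1}) the embeddings $A_\alpha^p\hookrightarrow L^s(\mu_r)$ and $A_{\beta-2}^{q'}\hookrightarrow L^{s'}(\mu_r)$ have norms tending to $0$ as $r\to1^-$; choosing $r$ close to $1$ bounds the tail by $C\varepsilon\sup_n\|f_n\|_{A_\alpha^p}$. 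When $q=1$ I would instead use $|g(z)|\le C\|g\|_{\mathcal{B}}\log\frac{2}{1-|z|}$ to reduce to $\int_{[r,1)}|f_n(t)|\log\frac{2}{1-t}\,d\mu(t)$, invoke \cite[Proposition 2.5]{CCO} to see that $\log\frac{2}{1-t}\,d\mu(t)$ is a vanishing $(\tfrac{\alpha+2}{p})$-Carleson measure, and apply Lemma \ref{le3.2} together with (\ref{eq2.1}) once more. Letting $n\to\infty$ and then $\varepsilon\to0$ completes the argument via Lemma \ref{Le5.1}.

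The routine parts are the test-function norm estimates and the verification that $\{f_a\}$ decays uniformly on compact subsets. The step I expect to be the main obstacle is the bookkeeping in the sufficiency direction: arranging the H\"older exponents $s,s'$ against the Carleson exponents so that the tail term is genuinely controlled by the vanishing quantities $\mathcal{N}(\mu_r)$, and pushing the $q=1$ case through the $\mathcal{B}_0$--$A_{\beta-2}^1$ duality with the auxiliary measure $\log\frac{2}{1-t}\,d\mu(t)$ in place of $\mu$ itself.
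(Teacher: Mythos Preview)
Your proposal is correct and follows essentially the same route as the paper's proof: the same test families $f_a,g_a$ combined with Lemma~\ref{Le5.1} for necessity, and the same $[0,r)$--$[r,1)$ splitting with the H\"older/Carleson tail bound (via $\mathcal{N}(\mu_r)\to0$) and, for $q=1$, the auxiliary measure $\log\frac{2}{1-t}\,d\mu(t)$ for sufficiency. The only cosmetic differences are that the paper uses the fixed function $g(z)=\log\frac{2}{1-z}$ rather than your $g_a$ in the $q=1$ necessity step, and cites \cite{GGHO} rather than \cite{CCO} for the proposition on logarithmic Carleson measures.
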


\begin{proof}
(\romannumeral1) First consider  $q\geq p$ and $q > 1$. Suppose that $\mathcal{H}_{\mu,\beta}$ is a compact operator from  $A_\alpha^p$ into $A_{\beta-2}^q$. Let $\{a_n\}\subset (0,1)$ be any sequence with $a_n\rightarrow 1$. We set
$$f_{a_n}(z)=\left(\frac{1-a_n^2}{(1-a_n z)^2} \right)^{(\alpha+2)/p}, \quad  z\in \mathbb{D}.$$
Then $f_{a_n}(z) \in A_\alpha^p$, $\sup_{n \geq 1} ||f_{a_n}||_{H^p} < \infty $ and $f_{a_n}\rightarrow0 $, uniformly on any compact subset of $\mathbb{D}$. Using Lemma \ref{Le5.1} and bearing in mind that $\mathcal{H}_{\mu,\beta}$ is a compact operator from $A_\alpha^p$ into $A_{\beta-2}^q$, we obtain that $\{\mathcal{H}_{\mu,\beta}(f_{a_n} )\}$ converges to 0 in $A_{\beta-2}^q$. This and (\ref{eq4.2}) imply that
\begin{align}\label{eq5.1}
	\begin{split}
		&\lim_{n\rightarrow \infty}\int_{[0,1)} \overline{f_{a_n}(t)} g(t) d \mu(t)\\
		=&\lim_{n\rightarrow \infty}\int_{\mathbb{D}} \overline{\mathcal{H}_{\mu,\beta}(f_{a_n})(z)}g(z)d(1-|z|^2)^{\beta-2} A(z)=0,\quad g\in A_{\beta-2}^{q'}.
	\end{split}
\end{align}
Now we set
$$g_{a_n}(z)=\left(\frac{1-a_n^{2}}{(1-{a_n}z)^{2}}\right)^{(\beta)/q'},\quad z\in\mathbb{D}.$$
It is easy to check that $g\in A_{\beta-2}^{q'}.$

For $r\in (a_n,1),$ we deduce that
\begin{align*}
	&\int_{[0,1)} \overline{f_{a_n}(t)} g_{a_n}(t) d \mu(t) \\
	\geq& \int_{a_n}^{1}\left(\frac{1-a_n^{2}}{(1-{a_n} t)^{2}}\right)^{(\alpha+2)/p} \left(\frac{1-a_n^{2}}{(1-{a_n} t)^{2}}\right)^{\beta/q'}  d\mu(t) \\
	\geq& \frac{C}{\left(1-a_n\right)^{(\alpha+2)/p+\beta/q'}} \mu([a_n, 1)).
\end{align*}
Then (\ref{eq5.1}) and the fact that $\{a_n\}\subset (0,1)$ is a sequence with $a_n\rightarrow 1$ imply that
$$\lim_{a\rightarrow 1^-}\frac{1}{\left(1-a\right)^{(\alpha+2)/p+\beta/q'}} \mu([a, 1))=0,$$
which shows that $\mu$ is a vanishing $\left(\frac{\alpha+2}{p}+\frac{\beta}{q'}\right)$-Carleson measure.

On the other hand, suppose now that $\mu$ is a vanishing $\left(\max\{\alpha+2,\beta\}(\frac{1}{p}+\frac{1}{q'})\right)$-Carleson measure. Let $\{f_n\}_{n=1}^{\infty}$ be a sequence of $A_\alpha^p$ functions with $\sup_{n\geq 1}\|f_n\|_{A_\alpha^p}<\infty$ and such that $\{f_n\}\rightarrow 0$, uniformly on $\mathbb{D}$. Then by Lemma \ref{Le5.1}, it is enough to prove that $\mathcal{H}_{\mu,\beta}(f_n)\rightarrow 0$ in $A_{\beta-2}^q$.

Taking $g\in A_{\beta-2}^{q'}$ and $r\in[0,1)$, we have
$$\int_{[0,1)}\left|f_{n}(t)\right||g(t)| d \mu(t) =\int_{[0, r)}\left|f_{n}(t)\right||g(t)| d \mu(t)+\int_{[r, 1)}\left|f_{n}(t)\right||g(t)| d \mu(t).$$
For the first term on the right side tends to $0$ since $\{f_n\}\rightarrow 0$ uniformly on compact subsets of $\mathbb{D}$.
For the second term, arguing as in proof of the boundedness (Theorem \ref{Th4.1} (\romannumeral1)), we obtain that
\begin{align*}
	\int_{[r,1)} \left|f_n(t)\right|\left| g(t)\right| d \mu(t)&\leq \left(\int_{[0,1)}\left|f_n(t)\right|^s d\mu_r(t)\right)^{1/s}\left(\int_{[0,1)}\left|g(t)\right|^{s'} d\mu_r(t)\right)^{1/s'}\\
	&\leq \left(\mathcal{N}(\mu_r)\right)^2\|f\|_{A_\alpha^p}\|g\|_{A_{\beta-2}^{q'}}.
\end{align*}
It tends to $0$ by (\ref{eq2.1}) and then
$$
\lim _{n \rightarrow \infty} \int_{[0,1)}\left|f_{n}(t) \| g(t)\right| d \mu(t)=0, \quad \text { for all } g \in A_{\beta-2}^{q'}.
$$
Thus
$$
\lim _{n \rightarrow \infty} \left\vert \int_{\mathbb{D}} \overline{\mathcal{ H}_{\mu,\beta}(f_{n}) (z)} g\left(z\right)(1-|z|^{\beta-2}) d A(z) \right\vert=0, \quad \text { for all } g \in A_{\beta-2}^{q'}.
$$
It means $\mathcal{H}_{\mu,\beta}(f_{n})\rightarrow0\; in \; A_{\beta-2}^q$, by Lemma \ref{Le5.1}, we obtain $\mathcal{H}_{\mu,\beta}$ is a compact operator from $A_\alpha^p$ into $A_{\beta-2}^q$.

(\romannumeral2) Let $0<p\leq1$. Suppose that $\mathcal{H}_{\mu,\beta}$ is a compact operator from $A_\alpha^p$ into $A_{\beta-2}^1$. Let $\{a_n\}\subset (0,1)$ be any sequence with $a_n\rightarrow 1$ and $f_{a_n}$ defines like in  (\romannumeral1). Lemma \ref{Le5.1} implies that $\{\mathcal{H}_{\mu,\beta}(f_{a_n} )\}$ converges to 0 in $A_{\beta-1}^1$. Then we have
\begin{align}\label{eq5.2}
	\begin{split}
		&\lim_{n\rightarrow \infty}\int_{[0,1)} \overline{f_{a_n}(t)} g(t) d \mu(t)\\
		=&\lim_{n\rightarrow \infty}\int_{\mathbb{D}} \overline{\mathcal{H}_{\mu,\beta}(f_{a_n})(z)}g(z)d(1-|z|^2)^{\beta-2} A(z)=0,\quad g\in \mathcal{B}_0.
	\end{split}
\end{align}
Now we set
$$g_(z)=\log \frac{2}{1-z}.$$
It is well know that $g\in \mathcal{B}_0$. For $r\in (a_n,1)$, we deduce that
\begin{align}
  & \int_{[0,1)}\overline{f_{a_n}(t)}g(t)d\mu(t) \notag \\
  & \geq C \int_{[a_n,1)} \left(\frac{1-a_n^2}{(1-a_nt)^2} \right)^{(\alpha+2)/p} \log \frac{2}{1- t}  d\mu(t) \notag \\
  & \geq  \frac{C\log \frac{2}{1- a_n}}{(1-a_n)^{(\alpha+2)/p}} \mu([a_n,1)). \notag
\end{align}
Letting $a_n\rightarrow1^-$ as $n\rightarrow \infty$, we have
$$\lim_{a\rightarrow1^-}\frac{\log \frac{2}{1- a_n}}{(1-a_n^2)^{(\alpha+2)/p}} \mu([a_n,1))=0.$$

We can obtain that $\mu$ is a vanishing 1-logarithmic $(\frac{\alpha+2}{p})$-Carleson measure.

On the other hand, suppose that $\mu$ is a vanishing 1-logarithmic $(\frac{\alpha+2}{p})$-Carleson measure. Let $d\nu(t)=\log\frac{2}{1-t}d\mu(t)$. By Proposition 2.5 of \cite{GGHO}, we know that $\nu$ is a vanishing $(\frac{\alpha+2}{p})$-Carleson.
Let $\{f_n\}_{n=1}^\infty $ be a sequence of $A_\alpha^p$ functions with $\sup_{n\geq 1} || f_n||_{ A_\alpha^p} < \infty$, and such that $\{f_n\} \rightarrow 0$, uniformly on any compact subset of $\mathbb{D}$. Then by Lemma \ref{Le5.1}, it is enough to prove that $\{\mathcal{H}_{\mu,\beta}(f_n )\}\rightarrow0$ in $A_{\beta-2}^1$.

Then for every $g\in\mathcal{B}_0$, $0<r<1$, using (\ref{eq2.1}), we deduce that
\begin{align}
		\int_{[0,1)}\left|f_{n}(t)\right||g(t)| d \mu(t) &=\int_{[0, r)}\left|f_{n}(t)\right||g(t)| d \mu(t)+\int_{[r, 1)}\left|f_{n}(t)\right||g(t)| d \mu(t) \notag\\
		& \leq \int_{[0, r)}\left|f_{n}(t)\right||g(t)| d \mu(t)+C\|g\|_{\mathcal{B}} \int_{[r, 1)} |f_n (t)|\log\frac{2}{1-t} d \mu(t)  \notag\\
		& \leq \int_{[0, r)}\left|f_{n}(t)\right||g(t)| d \mu(t)+C\|g\|_{\mathcal{B}} \int_{[0, 1)} |f_n (t)|d \nu_r(t) \notag\\
		& \leq \int_{[0, r)}\left|f_{n}(t)\right||g(t)| d \mu(t)+ C\mathcal{N}(\nu_r)\sup_{n\geq 1}\|f_n\|_{A^p}\|g\|_{\mathcal{B}}.
\end{align}
Hence, (\ref{eq2.1}), the conditions $\{f_n\}\rightarrow0$, uniformly on compact subsets of $\mathbb{D}$, and $\sup_{n\geq 1}\|f_n\|_{A^p}$ imply that
$$
\lim _{n \rightarrow \infty} \int_{[0,1)}\left|f_{n}(t) \| g(t)\right| d \mu(t)=0, \quad \text { for all } g \in \mathcal{B}_0.
$$
Using this and (\ref{eq5.2}), we have
$$
\lim _{n \rightarrow \infty} \left\vert \int_{\mathbb{D}} \overline{\mathcal{H}_{\mu,\beta}(f_{n}) (z)} g\left(z\right)(1-|z|^2)^{\beta-2} d A(z) \right\vert=0, \quad \text { for all } g \in \mathcal{B}_0.
$$
It means $\mathcal{H}_{\mu,\beta}(f_{n})\rightarrow0\; in \; A_{\beta-2}^1$, by Lemma \ref{Le5.1} we obtain $\mathcal{H}_{\mu,\beta}$ is a compact operator from $A_\alpha^p$ into $A_{\beta-2}^1$.
\end{proof}

\begin{corollary}
Suppose that $1\leq p <\infty $, $\alpha>-1$. Let $\mu$ be a positive Borel measure on $[0,1)$ which satisfies the condition in Theorem \ref{Th3.1}.
\begin{itemize}
\item [($\romannumeral1$)] If  $p > 1$, then $\mathcal{H}_{\mu,\alpha+2}$ is a compact operator on $A_\alpha^p$  if and only if $\mu$ is a vanishing $(\alpha+2)$-Carleson measure.
\item [($\romannumeral2$)] If $p = 1$, then $\mathcal{H}_{\mu,\alpha+2}$ is a compact operator on $A_\alpha^1$  if and only if $\mu$ is a vanishing 1-logarithmic $(\alpha+2)$-Carleson measure.
\end{itemize}
\end{corollary}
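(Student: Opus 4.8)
The plan is to obtain this corollary as the special case $\beta = \alpha+2$, $q = p$ of Theorem \ref{Th5.1}, after checking that in this regime the (a priori different) necessary and sufficient Carleson conditions appearing there collapse to a single one. First note that since $\alpha > -1$ we have $\beta := \alpha+2 > 1$, so the hypotheses of Theorem \ref{Th5.1} are met as soon as $\mu$ satisfies the condition in Theorem \ref{Th3.1}; moreover $\beta - 2 = \alpha$, so $A_{\beta-2}^q = A_\alpha^p$ when $q = p$, which is exactly the assertion that $\mathcal{H}_{\mu,\alpha+2}$ acts on $A_\alpha^p$.

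For part $(\romannumeral1)$, with $p > 1$ we take $q = p > 1$ in Theorem \ref{Th5.1}$(\romannumeral1)$. The necessary condition there is that $\mu$ be a vanishing $\left(\frac{\alpha+2}{p} + \frac{\beta}{q'}\right)$-Carleson measure; since $\beta = \alpha+2$, $q' = p'$ and $\frac1p + \frac1{p'} = 1$, this exponent equals $(\alpha+2)\left(\frac1p + \frac1{p'}\right) = \alpha+2$. The sufficient condition there is that $\mu$ be a vanishing $\left(\max\{\alpha+2,\beta\}\left(\frac1p + \frac1{q'}\right)\right)$-Carleson measure, and with $\beta = \alpha+2$ and $q = p$ the maximum is $\alpha+2$ and $\frac1p + \frac1{p'} = 1$, so this is again the vanishing $(\alpha+2)$-Carleson condition. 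The two conditions coincide, which yields the stated equivalence.

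For part $(\romannumeral2)$, with $p = q = 1$ we invoke Theorem \ref{Th5.1}$(\romannumeral2)$ directly: $\mathcal{H}_{\mu,\alpha+2}$ is compact from $A_\alpha^1$ into $A_{\alpha}^1$ if and only if $\mu$ is a vanishing $1$-logarithmic $\left(\frac{\alpha+2}{p}\right)$-Carleson measure, and $\frac{\alpha+2}{1} = \alpha+2$. There is no real obstacle here beyond bookkeeping with the exponents; the only point worth emphasizing is that the gap between the necessary and the sufficient hypotheses in Theorem \ref{Th5.1}$(\romannumeral1)$ disappears precisely because $\frac1p + \frac1{p'} = 1$ forces $\max\{\alpha+2,\beta\}\left(\frac1p+\frac1{q'}\right) = \frac{\alpha+2}{p} + \frac{\beta}{q'}$ once $\beta = \alpha+2$ and $q = p$.
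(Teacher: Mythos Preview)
Your proof is correct and matches the paper's approach: the paper states this corollary immediately after Theorem \ref{Th5.1} without a separate argument, treating it as the evident specialization $\beta=\alpha+2$, $q=p$ (equivalently, the case $q=p$ of Theorem \ref{Th5.01}). Your explicit verification that the necessary exponent $\frac{\alpha+2}{p}+\frac{\beta}{q'}$ and the sufficient exponent $\max\{\alpha+2,\beta\}\big(\frac1p+\frac1{q'}\big)$ both reduce to $\alpha+2$ in this regime is exactly the bookkeeping the paper leaves to the reader.
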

It is well known that $A_\alpha^p = D_{\alpha+p}^p$ (see \cite[Theorem 4.28]{ZOT}). Hence, we can immediately obtain the following.
\begin{corollary}
Suppose that $0<p <\infty $, $\alpha>p-1$ and $\beta>q+1$. Let $\mu$ be a positive Borel measure on $[0,1)$ which satisfies the condition in Theorem \ref{Th3.1}.
\begin{itemize}
\item [($\romannumeral1$)] If  $q\geq p$ and $q > 1$, if $\mathcal{H}_{\mu,\beta-q}$ is a compact operator from $\mathcal{D}_\alpha^p$ into $\mathcal{D}_{\beta-2}^q$ then $\mu$ is a vanishing $\left(\frac{\alpha+2}{p}+\frac{\beta-q}{q'}-1\right)$-Carleson measure.\\
     Conversely, if $\mu$ is a vanishing $\left(\max\{\alpha+2-p,\beta-q\}(\frac{1}{p}+\frac{1}{q'})\right)$-Carleson measure then $\mathcal{H}_{\mu,\beta-q}$ is a compact operator from $\mathcal{D}_\alpha^p$ into $\mathcal{D}_{\beta-2}^q$.
\item [($\romannumeral2$)] If  $q\geq p$ and $q = 1$, then $\mathcal{H}_{\mu,\beta-1}$ is a compact operator from $\mathcal{D}_\alpha^p$ into $\mathcal{D}_{\beta-2}^1$ if and only if $\mu$ is a vanishing 1-logarithmic $(\frac{\alpha+2}{p}-1)$-Carleson measure.
\end{itemize}
\end{corollary}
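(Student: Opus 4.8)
The plan is to deduce this corollary directly from Theorem~\ref{Th5.1} by exploiting the identification $A_\gamma^p = \mathcal{D}_{\gamma+p}^p$ with equivalent norms (\cite[Theorem~4.28]{ZOT}), which I rewrite as $\mathcal{D}_\gamma^p = A_{\gamma-p}^p$ whenever $\gamma > p-1$. First I would record the elementary bookkeeping: under the standing hypotheses $\alpha > p-1$ and $\beta > q+1$ one has $\alpha - p > -1$ and $(\beta-q)-2 > -1$, so that the weighted Bergman spaces $A_{\alpha-p}^p$ and $A_{(\beta-q)-2}^q$ are meaningful, and $\mathcal{D}_\alpha^p = A_{\alpha-p}^p$, $\mathcal{D}_{\beta-2}^q = A_{(\beta-q)-2}^q$, each with equivalent norms. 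Since compactness of a linear operator between normed spaces is unaffected by replacing the norms on domain and codomain by equivalent ones, $\mathcal{H}_{\mu,\beta-q}$ is compact from $\mathcal{D}_\alpha^p$ into $\mathcal{D}_{\beta-2}^q$ if and only if it is compact from $A_{\alpha-p}^p$ into $A_{(\beta-q)-2}^q$.

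Next I would invoke Theorem~\ref{Th5.1} with its parameter $\alpha$ replaced by $\alpha-p$ and its parameter $\beta$ replaced by $\beta-q$; the constraint $\beta-q > 1$ required there is precisely $\beta > q+1$, and the dichotomy on $q$ (namely $q \geq p$ with $q > 1$, resp.\ $q = 1$) is carried over verbatim. In case (i), Theorem~\ref{Th5.1}(i) then yields that compactness forces $\mu$ to be a vanishing $\left(\frac{(\alpha-p)+2}{p}+\frac{\beta-q}{q'}\right)$-Carleson measure; using $\frac{(\alpha-p)+2}{p} = \frac{\alpha+2}{p}-1$ this is exactly the stated vanishing $\left(\frac{\alpha+2}{p}+\frac{\beta-q}{q'}-1\right)$-Carleson condition. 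Conversely, the sufficient condition of Theorem~\ref{Th5.1}(i) becomes the vanishing $\left(\max\{(\alpha-p)+2,\beta-q\}\bigl(\tfrac1p+\tfrac1{q'}\bigr)\right)$-Carleson condition, which equals the stated vanishing $\left(\max\{\alpha+2-p,\beta-q\}\bigl(\tfrac1p+\tfrac1{q'}\bigr)\right)$-Carleson condition because $(\alpha-p)+2 = \alpha+2-p$. In case (ii), $q = 1$ forces the operator to be $\mathcal{H}_{\mu,\beta-1}$ and Theorem~\ref{Th5.1}(ii) gives the equivalence with $\mu$ being a vanishing $1$-logarithmic $\left(\frac{(\alpha-p)+2}{p}\right)$-Carleson measure, i.e.\ a vanishing $1$-logarithmic $\left(\frac{\alpha+2}{p}-1\right)$-Carleson measure, matching the statement.

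I do not expect any genuine analytic obstacle; the substance is entirely carried by Theorem~\ref{Th5.1}. The only points deserving a word of care are the bookkeeping ones just used: one should confirm that the hypothesis ``$\mu$ satisfies the condition in Theorem~\ref{Th3.1}'' is read with respect to the actual domain $A_{\alpha-p}^p$, so that the integral representation~(\ref{eq3.2}) of $\mathcal{H}_{\mu,\beta-q}$ and the duality pairing~(\ref{eq4.2}) employed inside the proof of Theorem~\ref{Th5.1} remain valid after the change of parameters; and one should verify the two trivial algebraic identities $\frac{(\alpha-p)+2}{p} = \frac{\alpha+2}{p}-1$ and $(\alpha-p)+2 = \alpha+2-p$ that reconcile the exponents in the two formulations.
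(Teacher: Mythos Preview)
Your proposal is correct and follows exactly the paper's approach: the corollary is stated immediately after the remark that $A_\alpha^p = \mathcal{D}_{\alpha+p}^p$ (equivalently $\mathcal{D}_\gamma^p = A_{\gamma-p}^p$) with the comment that it follows at once from Theorem~\ref{Th5.1}. Your careful bookkeeping of the parameter substitutions $\alpha \mapsto \alpha - p$, $\beta \mapsto \beta - q$ and the verification of the resulting Carleson exponents is precisely what the paper leaves implicit.
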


\begin{corollary}
Suppose that $1\leq p <\infty $, $\alpha>p-1$. Let $\mu$ be a positive Borel measure on $[0,1)$ which satisfies the condition in Theorem \ref{Th3.1}.
\begin{itemize}
\item [($\romannumeral1$)] If  $p > 1$, then $\mathcal{H}_{\mu,\alpha+2-p}$ is a compact operator on $\mathcal{D}_\alpha^p$ if and only if $\mu$ is a vanishing $(\alpha+2-p)$-Carleson measure.
\item [($\romannumeral2$)] If $p = 1$, then $\mathcal{H}_{\mu,\alpha+2-p}$ is a compact operator on $\mathcal{D}_\alpha^1$  if and only if $\mu$ is a vanishing 1-logarithmic $(\alpha+1)$-Carleson measure.
\end{itemize}
\end{corollary}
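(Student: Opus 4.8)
The plan is to read off this corollary from Theorem~\ref{Th5.1} in the special case where the source space coincides with the target space, using the identification $A_\alpha^p = \mathcal{D}_{\alpha+p}^p$ from \cite[Theorem~4.28]{ZOT}. Since the hypothesis $\alpha>p-1$ guarantees $\alpha-p>-1$, this says equivalently that $\mathcal{D}_\alpha^p = A_{\alpha-p}^p$ with equivalent norms. Because $\mathcal{H}_{\mu,\beta}$ is a single operator defined on $H(\mathbb{D})$, independent of which (equivalent) norm we place on the ambient space, boundedness and compactness of $\mathcal{H}_{\mu,\alpha+2-p}$ on $\mathcal{D}_\alpha^p$ are the same assertions as boundedness and compactness of that operator on $A_{\alpha-p}^p$; so the whole corollary reduces to a specialization of Theorem~\ref{Th5.1}.

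First I would fix the parameter dictionary: apply Theorem~\ref{Th5.1} with exponents $p_0=p$ and $q_0=p$, source weight $\alpha_0:=\alpha-p$, and order $\beta_0:=\alpha+2-p$. Then I would verify that all the hypotheses of Theorem~\ref{Th5.1} hold under this choice: $\alpha_0>-1$ and $\beta_0>1$ both follow at once from $\alpha>p-1$; one has $q_0=p\ge p_0$; and the standing assumption that $\mu$ satisfies the condition of Theorem~\ref{Th3.1} is exactly what is needed for $\mathcal{H}_{\mu,\beta_0}$ to be well defined on $A_{\alpha_0}^p$. The key observation is that the target space produced by Theorem~\ref{Th5.1} is $A_{\beta_0-2}^{q_0}=A_{\alpha-p}^{p}=\mathcal{D}_\alpha^p$, since $\beta_0-2=\alpha_0$; thus Theorem~\ref{Th5.1} is already, verbatim, a statement about $\mathcal{H}_{\mu,\alpha+2-p}\colon\mathcal{D}_\alpha^p\to\mathcal{D}_\alpha^p$.

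Next I would carry out the (entirely mechanical) simplification of the Carleson exponents. For $p>1$ we invoke part~(i): the necessary condition there is that $\mu$ be a vanishing $\bigl(\tfrac{\alpha_0+2}{p_0}+\tfrac{\beta_0}{q_0'}\bigr)$-Carleson measure, and since $\alpha_0+2=\beta_0=\alpha+2-p$, and $q_0=p$ gives $q_0'=p'$ with $\tfrac1{p_0}+\tfrac1{q_0'}=1$, this exponent collapses to $\alpha+2-p$; the sufficient condition there asks that $\mu$ be a vanishing $\bigl(\max\{\alpha_0+2,\beta_0\}(\tfrac1{p_0}+\tfrac1{q_0'})\bigr)$-Carleson measure, which for the same reason is again the vanishing $(\alpha+2-p)$-Carleson condition. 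Hence the necessary and the sufficient conditions coincide, which proves (i). For $p=1$ we invoke part~(ii): $\mathcal{H}_{\mu,\beta_0}$ is compact on $A_{\alpha_0}^1$ if and only if $\mu$ is a vanishing $1$-logarithmic $\bigl(\tfrac{\alpha_0+2}{p_0}\bigr)$-Carleson measure, and $\tfrac{\alpha_0+2}{1}=\alpha+1$, which is exactly (ii).

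There is no serious obstacle here; the only point deserving a sentence of justification is that the identification $\mathcal{D}_\alpha^p = A_{\alpha-p}^p$ genuinely transports compactness between the two spaces. This is immediate from the sequential criterion of Lemma~\ref{Le5.1} in its Dirichlet-space form, because being bounded in norm and converging to $0$ uniformly on compact subsets of $\mathbb{D}$ is a property unaffected by replacing a norm with an equivalent one. With that noted, the corollary follows by plugging the above exponent computation into Theorem~\ref{Th5.1}.
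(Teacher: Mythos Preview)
Your proposal is correct and follows essentially the same route as the paper: the paper itself simply invokes the identification $A_\alpha^p=\mathcal{D}_{\alpha+p}^p$ from \cite[Theorem~4.28]{ZOT} and reads the corollary off from Theorem~\ref{Th5.1} (via Corollary~5.2), exactly as you do with the dictionary $\alpha_0=\alpha-p$, $\beta_0=\alpha+2-p$, $p_0=q_0=p$. Your explicit check that $\alpha_0+2=\beta_0$ forces the necessary and sufficient Carleson exponents of Theorem~\ref{Th5.1}(i) to collapse to the single value $\alpha+2-p$ is precisely the point that makes this an ``if and only if'' statement.
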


Then we discuss those measures $\mu$ for which $\mathcal{H}_{\mu,\beta}$ is a compact operator from $\mathcal{D}_\alpha^p$ into $\mathcal{D}_{\beta-1}^q$.

\begin{theorem}\label{Th5.03}
Suppose that $q > 1$ and  $0<p\leq q <\infty$, $\alpha>p-1$. Let $\mu$ be a positive Borel measure on $[0,1)$ which satisfies the condition in Theorem \ref{Th3.1}. Then $\mathcal{H}_{\mu,\alpha+1-p}$ is a compact operator from $\mathcal{D}_\alpha^p$ into $\mathcal{D}_{\alpha-p}^q$ if and only if $\mu$ is a vanishing $\left(\frac{\alpha+2-p}{p}+\frac{\alpha+2-p}{q'}\right)$-Carleson measure.
\end{theorem}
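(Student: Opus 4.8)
The plan is to prove this directly by transplanting the compactness scheme of Theorem~\ref{Th5.1}(i) into the Dirichlet setting: replace the Bergman duality pairing~(\ref{eq4.2}) by the Dirichlet pairing of Lemma~\ref{Le4.1} together with its Fubini identity~(\ref{eq4.11}), and use the same test functions as in the boundedness result Theorem~\ref{Th4.3} specialized to $\beta=\alpha+1-p$. The key observation is that with $\beta+1=\alpha+2-p$ the two Carleson exponents occurring in Theorem~\ref{Th4.3}, namely $\frac{\alpha+2}{p}+\frac{\beta+1}{q'}-1$ and $\max\{\alpha+2-p,\beta+1\}(\frac1p+\frac1{q'})$, both collapse to the single value $\frac{\alpha+2-p}{p}+\frac{\alpha+2-p}{q'}$, which is precisely why the characterization becomes an equivalence. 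Note that $\beta=\alpha+1-p>0$ holds since $\alpha>p-1$, so~(\ref{eq4.11}) and the Dirichlet duality $(\mathcal{D}_{\alpha-p}^{q})^{*}\cong\mathcal{D}_{\alpha-p}^{q'}$ are available.

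\emph{Necessity.} Assuming $\mathcal{H}_{\mu,\alpha+1-p}$ is compact from $\mathcal{D}_\alpha^p$ into $\mathcal{D}_{\alpha-p}^q$, fix an arbitrary $\{a_n\}\subset(0,1)$ with $a_n\to1$ and take the families
$$f_{a_n}(z)=\frac{1}{a_n}\,\frac{(1-a_n^2)^{(\alpha+2)/p}}{(1-a_nz)^{2(\alpha+2)/p-1}},\qquad g_{a_n}'(z)=\left(\frac{1-a_n^2}{(1-a_nz)^2}\right)^{(\alpha+2-p)/q'},$$
exactly as in Theorem~\ref{Th4.3} with $\beta=\alpha+1-p$. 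One checks by the standard estimates for $\int_{\mathbb{D}}(1-|z|^2)^a|1-a_nz|^{-b}\,dA(z)$ that $\sup_n\|f_{a_n}\|_{\mathcal{D}_\alpha^p}<\infty$, $\sup_n\|g_{a_n}\|_{\mathcal{D}_{\alpha-p}^{q'}}<\infty$, and $f_{a_n}\to0$ uniformly on compact subsets of $\mathbb{D}$. By Lemma~\ref{Le5.1}, $\mathcal{H}_{\mu,\alpha+1-p}(f_{a_n})\to0$ in $\mathcal{D}_{\alpha-p}^q$; pairing with $g_{a_n}$ and invoking~(\ref{eq4.11}) and Lemma~\ref{Le4.1} gives $\int_{[0,1)}t\,\overline{f_{a_n}(t)}\,g_{a_n}'(t)\,d\mu(t)\to0$. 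Estimating this integral from below over $[a_n,1)$ exactly as in the necessity argument of Theorem~\ref{Th5.1}(i) yields
$$\lim_{n\to\infty}\frac{\mu([a_n,1))}{(1-a_n)^{(\alpha+2-p)/p+(\alpha+2-p)/q'}}=0,$$
and since $\{a_n\}$ was arbitrary, $\mu$ is a vanishing $\left(\frac{\alpha+2-p}{p}+\frac{\alpha+2-p}{q'}\right)$-Carleson measure.

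\emph{Sufficiency.} Suppose $\mu$ is a vanishing $\left(\frac{\alpha+2-p}{p}+\frac{\alpha+2-p}{q'}\right)$-Carleson measure and let $\{f_n\}$ be bounded in $\mathcal{D}_\alpha^p$ with $f_n\to0$ uniformly on compacts; by Lemma~\ref{Le5.1} it suffices to show $\mathcal{H}_{\mu,\alpha+1-p}(f_n)\to0$ in $\mathcal{D}_{\alpha-p}^q$. Using $\mathcal{D}_\alpha^p=A_{\alpha-p}^p$ and $\|g'\|_{A_{\alpha-p}^{q'}}\le\|g\|_{\mathcal{D}_{\alpha-p}^{q'}}$ (both legitimate since $\alpha>p-1$), put $s=1+p/q'$ and $s'=1+q'/p$; then $p\le s$ and $q'\le s'$ (both equivalent to $p\le q$), and $\frac{(2+\alpha-p)s}{p}=\frac{(2+\alpha-p)s'}{q'}=\frac{\alpha+2-p}{p}+\frac{\alpha+2-p}{q'}$, so Lemma~\ref{le3.2} applied to the truncations $\mu_r$ gives, for every $g\in\mathcal{D}_{\alpha-p}^{q'}$,
$$\int_{[r,1)}|f_n(t)|\,|g'(t)|\,d\mu(t)\le\left(\int_{[0,1)}|f_n|^s\,d\mu_r\right)^{1/s}\left(\int_{[0,1)}|g'|^{s'}\,d\mu_r\right)^{1/s'}\le C\,\mathcal{N}(\mu_r)^2\,\|f_n\|_{\mathcal{D}_\alpha^p}\|g\|_{\mathcal{D}_{\alpha-p}^{q'}}.$$
Splitting $\int_{[0,1)}=\int_{[0,r)}+\int_{[r,1)}$, the first integral tends to $0$ as $n\to\infty$ by uniform convergence of $f_n$ on the compact $[0,r]$ and finiteness of $\int_{[0,r)}|g'|\,d\mu$, while the second is bounded uniformly in $n$ by a quantity that tends to $0$ as $r\to1^-$ by~(\ref{eq2.1}); hence $\int_{[0,1)}|f_n(t)|\,|g'(t)|\,d\mu(t)\to0$ for all such $g$, so by~(\ref{eq4.11}) and Lemma~\ref{Le4.1} we get $\langle\mathcal{H}_{\mu,\alpha+1-p}(f_n),g\rangle_{\mathcal{D}}\to0$, and therefore $\mathcal{H}_{\mu,\alpha+1-p}(f_n)\to0$ in $\mathcal{D}_{\alpha-p}^q$, as desired.

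The conceptual content beyond this template is slim; the main thing to be careful about is the algebraic coincidence of the two Lemma~\ref{le3.2} exponents $\frac{(2+\alpha-p)s}{p}$ and $\frac{(2+\alpha-p)s'}{q'}$, which is exactly the mechanism converting the one-sided bounds of Theorem~\ref{Th4.3} into the stated equivalence, together with the verification that the test families have the asserted bounded norms in the Dirichlet-type spaces. The one step I would write out with some care is the use of the Fubini identity~(\ref{eq4.11}) for the truncated measures $\mu_r$, but the required absolute convergence is guaranteed by the hypothesis that $\mu$ satisfies the condition of Theorem~\ref{Th3.1}, just as in the proof of Theorem~\ref{Th4.3}.
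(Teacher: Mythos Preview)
Your proposal is correct and follows essentially the same route as the paper: the paper reduces Theorem~\ref{Th5.03} to Theorem~\ref{Th5.3} (whose proof is in turn declared identical to that of Theorem~\ref{Th4.3}(i) combined with Theorem~\ref{Th5.1}(i)), and you have simply written out that combination explicitly for the specialization $\beta=\alpha+1-p$, correctly isolating the algebraic coincidence $\beta+1=\alpha+2-p$ that collapses the necessary and sufficient Carleson exponents into one. One small point worth making explicit in the sufficiency half: the bound on $\int_{[0,r)}|f_n||g'|\,d\mu$ must be uniform over $\|g\|_{\mathcal{D}_{\alpha-p}^{q'}}\le 1$ (via the pointwise estimate $|g'(t)|\le C_r\|g\|$ on $[0,r]$), since you ultimately need $\sup_{\|g\|\le 1}|\langle\mathcal{H}_{\mu,\alpha+1-p}(f_n),g\rangle|\to 0$ rather than merely weak convergence; the paper's own proof of Theorem~\ref{Th5.1}(i) relies on the same uniformity without stating it, so this is not a divergence from the paper but just a detail to spell out.
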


The Theorem \ref{Th5.03} is also closely related to Theorem \ref{Th5.3}, thus it suffices to prove Theorem \ref{Th5.3}
\begin{theorem}\label{Th5.3}
Suppose that  $q > 1$ and $0<p\leq q <\infty$, $\alpha>p-1$ and $\beta>q$. Let $\mu$ be a positive Borel measure on $[0,1)$ which satisfies the condition in Theorem \ref{Th3.1}.
\begin{itemize}
\item [($\romannumeral1$)] If $\mathcal{H}_{\mu,\beta}$ is a compact operator from $\mathcal{D}_\alpha^p$ into $\mathcal{D}_{\beta-1}^q$ then $\mu$ is a vanishing $\left(\frac{\alpha+2}{p}+\frac{\beta+1}{q'}-1\right)$-Carleson measure.
\item [($\romannumeral2$)] If $\mu$ is a vanishing $\left(\max\{\alpha+2-p,\beta+1\}(\frac{1}{p}+\frac{1}{q'})\right)$-Carleson measure then $\mathcal{H}_{\mu,\beta}$ is a compact operator from $\mathcal{D}_\alpha^p$ into $\mathcal{D}_{\beta-1}^q$.
\end{itemize}
\end{theorem}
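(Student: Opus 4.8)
This statement is the compactness counterpart of Theorem \ref{Th4.3}, and the plan is to rerun the scheme of Theorem \ref{Th5.1}, now with the Dirichlet pairing (\ref{eq4.11})--(\ref{eq4.12}) in place of the Bergman pairing, with Lemma \ref{Le5.1} as the working criterion for compactness, and with (\ref{eq2.1}) (the description of vanishing Carleson measures through $\mathcal{N}(\mu_r)\to 0$) as the quantitative input. Throughout one uses the identification $\mathcal{D}_\alpha^p=A_{\alpha-p}^p$, valid since $\alpha>p-1$, so that Lemma \ref{le3.2} is applicable.

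\emph{Necessity in} $(\romannumeral1)$. Fix any $\{a_n\}\subset(0,1)$ with $a_n\to 1^-$ and take the functions $f_{a_n}(z)=\frac1{a_n}\frac{(1-a_n^2)^{(\alpha+2)/p}}{(1-a_nz)^{2(\alpha+2)/p-1}}$ already used in the proof of Theorem \ref{Th4.3}; they are bounded in $\mathcal{D}_\alpha^p$ and tend to $0$ uniformly on compact subsets of $\mathbb{D}$, so if $\mathcal{H}_{\mu,\beta}$ is compact then Lemma \ref{Le5.1} gives $\|\mathcal{H}_{\mu,\beta}(f_{a_n})\|_{\mathcal{D}_{\beta-1}^q}\to 0$. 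Let $g_{a_n}$ be the primitive with $g_{a_n}(0)=0$ of $g_{a_n}'(z)=\bigl(\frac{1-a_n^2}{(1-a_nz)^2}\bigr)^{(\beta+1)/q'}$; then $\{g_{a_n}\}$ is bounded in $\mathcal{D}_{\beta-1}^{q'}$, and the normalization $g_{a_n}(0)=0$ makes the duality pairing of Lemma \ref{Le4.1} collapse to the integral in (\ref{eq4.11}). Pairing and using $\bigl|\langle\mathcal{H}_{\mu,\beta}(f_{a_n}),g_{a_n}\rangle\bigr|\le\|\mathcal{H}_{\mu,\beta}(f_{a_n})\|_{\mathcal{D}_{\beta-1}^q}\|g_{a_n}\|_{\mathcal{D}_{\beta-1}^{q'}}\to 0$ yields $\int_{[0,1)}t\,f_{a_n}(t)g_{a_n}'(t)\,d\mu(t)\to 0$; the lower bound already computed in the proof of Theorem \ref{Th4.3} shows this integral exceeds a fixed multiple of $(1-a_n)^{-((\alpha+2)/p+(\beta+1)/q'-1)}\mu([a_n,1))$, whence $\mu$ is a vanishing $\bigl(\frac{\alpha+2}{p}+\frac{\beta+1}{q'}-1\bigr)$-Carleson measure.

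\emph{Sufficiency in} $(\romannumeral2)$. Assume $\mu$ is a vanishing $\bigl(\max\{\alpha+2-p,\beta+1\}(\frac1p+\frac1{q'})\bigr)$-Carleson measure, and let $\{f_n\}$ be bounded in $\mathcal{D}_\alpha^p$ with $f_n\to 0$ uniformly on compacta. By Lemma \ref{Le5.1}, the duality of Lemma \ref{Le4.1}, and (\ref{eq4.11}), it suffices to show $\sup_{\|g\|_{\mathcal{D}_{\beta-1}^{q'}}\le 1}\bigl|\int_{[0,1)}t\,\overline{f_n(t)}g'(t)\,d\mu(t)\bigr|\to 0$ (the constant term $\mathcal{H}_{\mu,\beta}(f_n)(0)\overline{g(0)}=\overline{g(0)}\int_{[0,1)}f_n\,d\mu$ is controlled by the same splitting below, since the hypothesis forces $\mu$ to be vanishing $\frac{\alpha+2-p}{p}$-Carleson, so $A_{\alpha-p}^p$ embeds into $L^1(\mu_r)$ with norm tending to $0$). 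Split $\int_{[0,1)}=\int_{[0,r)}+\int_{[r,1)}$. For fixed $r$ the first integral is at most $\bigl(\sup_{[0,r]}|f_n|\bigr)\int_{[0,r)}|g'|\,d\mu\le C_r\bigl(\sup_{[0,r]}|f_n|\bigr)\|g\|_{\mathcal{D}_{\beta-1}^{q'}}\to 0$, using that $\mu$ is finite and that point evaluations of $g'$ on $[0,r]$ are bounded by $C_r\|g\|_{\mathcal{D}_{\beta-1}^{q'}}$. For the second, put $s=1+p/q'$, so $s'=1+q'/p$, with $p\le s$ and $q'\le s'$ because $p\le q$; then Lemma \ref{le3.2} applied to $\mu_r$ — whose Carleson exponents $(\alpha+2-p)(\frac1p+\frac1{q'})$ and $(\beta+1)(\frac1p+\frac1{q'})$ are both dominated by the hypothesized one, since on $[0,1)$ a larger Carleson exponent implies a smaller one — together with Hölder's inequality gives
\[
\int_{[r,1)}|f_n(t)||g'(t)|\,d\mu(t)\le\mathcal{N}(\mu_r)^2\,\|f_n\|_{\mathcal{D}_\alpha^p}\,\|g\|_{\mathcal{D}_{\beta-1}^{q'}},
\]
and $\mathcal{N}(\mu_r)\to 0$ as $r\to 1^-$ by (\ref{eq2.1}). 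Given $\varepsilon>0$, first choose $r$ making the second term $<\varepsilon/2$ uniformly in $n$, then $N$ making the first term $<\varepsilon/2$ for $n\ge N$; hence $\mathcal{H}_{\mu,\beta}(f_n)\to 0$ in $\mathcal{D}_{\beta-1}^q$ and Lemma \ref{Le5.1} gives the compactness.

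\emph{Main obstacle.} Exactly as in Theorem \ref{Th5.1}, the delicate point is that in the necessity argument one pairs $\mathcal{H}_{\mu,\beta}(f_{a_n})$ against a \emph{moving} family $g_{a_n}$: this is legitimate only because $\mathcal{H}_{\mu,\beta}(f_{a_n})\to 0$ in the norm of $\mathcal{D}_{\beta-1}^q$ while $\sup_n\|g_{a_n}\|_{\mathcal{D}_{\beta-1}^{q'}}<\infty$, and the choice $g_{a_n}(0)=0$ is what lets one discard the constant term of the Dirichlet pairing. The remaining work — the $A_\alpha^p$- and $A_{\beta-1}^{q'}$-norm bounds for the test families, checking that the two Carleson exponents arising in the Hölder splitting lie below $\max\{\alpha+2-p,\beta+1\}(\frac1p+\frac1{q'})$, and that vanishing passes to the truncations $\mu_r$ — is the same routine bookkeeping already carried out in Theorems \ref{Th4.3} and \ref{Th5.1}.
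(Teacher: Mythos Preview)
Your proposal is correct and follows essentially the same route as the paper, which simply says ``The proof is the same as that of Theorem \ref{Th4.3}(\romannumeral1) and Theorem \ref{Th5.1}(\romannumeral1). We omit the details here.'' You have filled in precisely those details: the test families from Theorem \ref{Th4.3} combined with the compactness criterion (Lemma \ref{Le5.1}) and the $[0,r)\cup[r,1)$ splitting of Theorem \ref{Th5.1}, and you are in fact more careful than the paper in handling the constant term $\mathcal{H}_{\mu,\beta}(f_n)(0)\overline{g(0)}$ of the Dirichlet pairing and in making explicit the uniformity in $g$ needed to upgrade weak to norm convergence.
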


\begin{proof}
The proof is the same as that of Theorem \ref{Th4.3}(\romannumeral1) and Theorem \ref{Th5.1}(\romannumeral1). We omit the details here.
\end{proof}

\begin{corollary}
Suppose that $1 < p < \infty $, $\alpha>p-1$. Let $\mu$ be a positive Borel measure on $[0,1)$ which satisfies the condition in Theorem \ref{Th3.1}. Then $\mathcal{H}_{\mu,\alpha+1-p}$ is a compact operator from $\mathcal{D}_\alpha^p$ into $\mathcal{D}_{\alpha-p}^p$ if and only if $\mu$ is a vanishing $(\alpha+2-p)$-Carleson measure.
\end{corollary}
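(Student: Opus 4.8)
The plan is to obtain this corollary as the diagonal case $q=p$ of Theorem \ref{Th5.03} (equivalently, the case $\beta=\alpha+1-p$, $q=p$ of Theorem \ref{Th5.3}). First I would check that the hypotheses transfer: since $1<p<\infty$ we may legitimately take $q=p$, so that $q>1$ and $0<p\le q<\infty$, while $\alpha>p-1$ is exactly the standing assumption and $\mu$ is assumed throughout to satisfy the condition in Theorem \ref{Th3.1}. Hence Theorem \ref{Th5.03} applies verbatim with $q=p$, and it gives that $\mathcal{H}_{\mu,\alpha+1-p}\colon\mathcal{D}_\alpha^p\to\mathcal{D}_{\alpha-p}^p$ is a compact operator if and only if $\mu$ is a vanishing $\left(\frac{\alpha+2-p}{p}+\frac{\alpha+2-p}{q'}\right)$-Carleson measure.

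The second step is the arithmetic simplification of the Carleson exponent. With $q=p$ one has $q'=p'$, hence
$$\frac{\alpha+2-p}{p}+\frac{\alpha+2-p}{q'}=(\alpha+2-p)\left(\frac1p+\frac1{p'}\right)=\alpha+2-p ,$$
by the conjugate-exponent identity $\frac1p+\frac1{p'}=1$. Substituting this into the conclusion of Theorem \ref{Th5.03} yields precisely the asserted equivalence.

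If one prefers to invoke Theorem \ref{Th5.3} directly rather than its specialized form Theorem \ref{Th5.03}, the same computation is needed at the level of both the necessary condition (i) and the sufficient condition (ii): taking $\beta=\alpha+1-p$ (so that $\beta-1=\alpha-p$ and $\mathcal{D}_{\beta-1}^q=\mathcal{D}_{\alpha-p}^p$) and $q=p$, the exponent $\frac{\alpha+2}{p}+\frac{\beta+1}{q'}-1$ in (i) reduces to $\frac{\alpha+2}{p}+\frac{\alpha+2-p}{p'}-1=\alpha+2-p$, while the exponent $\max\{\alpha+2-p,\beta+1\}\left(\frac1p+\frac1{q'}\right)$ in (ii) reduces to $(\alpha+2-p)\cdot 1=\alpha+2-p$. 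Thus the ``only if'' and ``if'' directions collapse onto the same vanishing-Carleson condition, and the equivalence follows.

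There is essentially no mathematical obstacle here, since all the work is already contained in Theorem \ref{Th5.3} (respectively Theorem \ref{Th5.03}). The only point that needs a moment's attention is the bookkeeping of the parameter ranges: one must make sure that the choice $q=p$ keeps us inside the hypothesis $q>1$, $0<p\le q<\infty$ — which it does, because $p>1$ — and that the value $\beta=\alpha+1-p$ is compatible with the constraint $\beta>q$ imposed in Theorem \ref{Th5.3}; routing the argument through Theorem \ref{Th5.03}, where that constraint has already been absorbed into the statement, is the cleanest way to avoid having to revisit it.
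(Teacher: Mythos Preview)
Your proposal is correct and matches the paper's intended argument: the corollary is stated immediately after Theorem \ref{Th5.3} without proof precisely because it is the diagonal specialization $q=p$ of Theorem \ref{Th5.03}, and your arithmetic $(\alpha+2-p)\bigl(\tfrac1p+\tfrac1{p'}\bigr)=\alpha+2-p$ is exactly the computation required. Your closing remark about the constraint $\beta>q$ in Theorem \ref{Th5.3} is well observed---routing through Theorem \ref{Th5.03} as stated is indeed the clean way to handle it.
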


\begin{corollary}
Suppose that $q > 1$ and $0<p\leq q <\infty$, $\alpha>-1$ and $\beta>0$. Let $\mu$ be a positive Borel measure on $[0,1)$ which satisfies the condition in Theorem \ref{Th3.1}.
\begin{itemize}
\item [($\romannumeral1$)] If $\mathcal{H}_{\mu,\beta+q}$ is a compact operator from $A_\alpha^p$ into $A_{\beta-1}^q$ then $\mu$ is a vanishing $\left(\frac{\alpha+2}{p}+\frac{\beta+1+q}{q'}\right)$-Carleson measure.
\item [($\romannumeral2$)] If $\mu$ is a vanishing $\left(\max\{\alpha+2,\beta+1+q\}(\frac{1}{p}+\frac{1}{q'})\right)$-Carleson measure then $\mathcal{H}_{\mu,\beta+q}$ is a compact operator from $A_\alpha^p$ into $A_{\beta-1}^q$.
\end{itemize}
\end{corollary}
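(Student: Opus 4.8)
The plan is to obtain this corollary as a direct transcription of Theorem~\ref{Th5.3} via the identification $A_\gamma^p=\mathcal{D}_{\gamma+p}^p$ recorded before Theorem~\ref{Th4.3} (see \cite[Theorem~4.28]{ZOT}), which holds with equivalence of norms. First I would note that the domain $A_\alpha^p$ coincides with $\mathcal{D}_{\alpha+p}^p$ and that the codomain $A_{\beta-1}^q$ coincides with $\mathcal{D}_{(\beta+q)-1}^q$; consequently $\mathcal{H}_{\mu,\beta+q}$ maps $A_\alpha^p$ into $A_{\beta-1}^q$ if and only if it maps $\mathcal{D}_{\alpha+p}^p$ into $\mathcal{D}_{(\beta+q)-1}^q$, the operator norms are comparable, and---since compactness is unaffected by passing to an equivalent norm---one realization is compact exactly when the other is.

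Next I would verify that the hypotheses of Theorem~\ref{Th5.3} are satisfied after the substitution $\alpha\mapsto\alpha+p$, $\beta\mapsto\beta+q$. The conditions $q>1$ and $0<p\le q<\infty$ are untouched; the requirement ``$\alpha>p-1$'' of Theorem~\ref{Th5.3} becomes $\alpha+p>p-1$, i.e.\ $\alpha>-1$, which is assumed here; and ``$\beta>q$'' becomes $\beta+q>q$, i.e.\ $\beta>0$, again part of our hypotheses. Finally, $\mu$ satisfies the condition in Theorem~\ref{Th3.1} relative to $A_\alpha^p$ by assumption, and since $\mathcal{D}_{\alpha+p}^p=A_\alpha^p$ this is precisely the hypothesis needed to invoke Theorem~\ref{Th5.3}.

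It then remains only to translate the Carleson exponents. For part~(i), Theorem~\ref{Th5.3}(i) forces $\mu$ to be a vanishing $\bigl(\frac{(\alpha+p)+2}{p}+\frac{(\beta+q)+1}{q'}-1\bigr)$-Carleson measure, and since $\frac{(\alpha+p)+2}{p}-1=\frac{\alpha+2}{p}$ this is exactly a vanishing $\bigl(\frac{\alpha+2}{p}+\frac{\beta+1+q}{q'}\bigr)$-Carleson measure. For part~(ii), the sufficient condition in Theorem~\ref{Th5.3}(ii) reads: $\mu$ a vanishing $\bigl(\max\{(\alpha+p)+2-p,\,(\beta+q)+1\}(\tfrac1p+\tfrac1{q'})\bigr)$-Carleson measure, and $(\alpha+p)+2-p=\alpha+2$ turns this into the stated hypothesis that $\mu$ be a vanishing $\bigl(\max\{\alpha+2,\beta+1+q\}(\tfrac1p+\tfrac1{q'})\bigr)$-Carleson measure; Theorem~\ref{Th5.3}(ii) then yields compactness of $\mathcal{H}_{\mu,\beta+q}$ on the Dirichlet realization, hence from $A_\alpha^p$ into $A_{\beta-1}^q$. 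No genuine difficulty arises beyond this bookkeeping; the only point meriting a line of care is that the norm equivalence $A_\gamma^p\asymp\mathcal{D}_{\gamma+p}^p$ is two-sided, so that the qualitative property of compactness transfers in both directions, which is immediate from \cite[Theorem~4.28]{ZOT}.
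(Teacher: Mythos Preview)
Your proposal is correct and follows exactly the route the paper intends: the corollary is placed immediately after Theorem~\ref{Th5.3} and, just as in the boundedness section, is meant to be read off from that theorem via the identification $A_\gamma^p=\mathcal{D}_{\gamma+p}^p$ with the substitutions $\alpha\mapsto\alpha+p$, $\beta\mapsto\beta+q$. Your verification of the hypotheses and the bookkeeping on the Carleson exponents are accurate and in fact more detailed than anything the paper spells out.
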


\begin{corollary}
Suppose that $1 < p < \infty $, $\alpha>p-1$. Let $\mu$ be a positive Borel measure on $[0,1)$ which satisfies the condition in Theorem \ref{Th3.1}. Then $\mathcal{H}_{\mu,\alpha+1}$ is a compact operator from $A_\alpha^p$ into $A_{\alpha-p}^p$ if and only if $\mu$ is a vanishing $(\alpha+2)$-Carleson measure.
\end{corollary}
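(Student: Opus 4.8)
The plan is to obtain this corollary directly from Theorem \ref{Th5.3}, using the identification $A_\gamma^p=\mathcal{D}_{\gamma+p}^p$ (valid for every $\gamma>-1$) to recast the Bergman operator as a Dirichlet-space operator. Under this identification the domain becomes $A_\alpha^p=\mathcal{D}_{\alpha+p}^p$ and the target becomes $A_{\alpha-p}^p=\mathcal{D}_\alpha^p$, so that $\mathcal{H}_{\mu,\alpha+1}\colon A_\alpha^p\to A_{\alpha-p}^p$ is literally the same operator as $\mathcal{H}_{\mu,\alpha+1}\colon \mathcal{D}_{\alpha+p}^p\to \mathcal{D}_\alpha^p$. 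I would therefore apply Theorem \ref{Th5.3} with its four parameters $(\alpha,p,q,\beta)$ specialized to $(\alpha+p,\,p,\,p,\,\alpha+1)$; equivalently, this is the diagonal case $q=p$, $\beta=\alpha+1-p$ of the foregoing Bergman-space compactness corollary.

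First I would verify that these choices meet the hypotheses of Theorem \ref{Th5.3}, namely $q>1$, $0<p\le q<\infty$, domain parameter $>p-1$, and $\beta>q$. With $q=p$ and $p>1$ the first two hold; the domain parameter $\alpha+p$ exceeds $p-1$ precisely because $\alpha>-1$, which follows from the standing hypothesis $\alpha>p-1$; and $\beta=\alpha+1>p=q$ is exactly the hypothesis $\alpha>p-1$. The measure $\mu$ is assumed to satisfy the condition in Theorem \ref{Th3.1}, as required so that $\mathcal{H}_{\mu,\alpha+1}$ is well defined and agrees with $\mathcal{I}_{\mu,\alpha+1}$.

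The decisive step is to evaluate the two Carleson exponents furnished by Theorem \ref{Th5.3} at these parameters and observe that they coincide. For the necessary direction the exponent is
\begin{align*}
\frac{(\alpha+p)+2}{p}+\frac{(\alpha+1)+1}{p'}-1
&=\frac{\alpha+2}{p}+1+\frac{\alpha+2}{p'}-1 \\
&=(\alpha+2)\Bigl(\tfrac1p+\tfrac1{p'}\Bigr)=\alpha+2,
\end{align*}
while for the sufficient direction the two arguments of the maximum are $(\alpha+p)+2-p=\alpha+2$ and $(\alpha+1)+1=\alpha+2$, so the maximum degenerates and the exponent equals $(\alpha+2)\bigl(\tfrac1p+\tfrac1{p'}\bigr)=\alpha+2$ as well. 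Hence both the necessary and the sufficient conditions of Theorem \ref{Th5.3} reduce to the single requirement that $\mu$ be a vanishing $(\alpha+2)$-Carleson measure, and the ``only if'' and ``if'' implications combine into the asserted equivalence.

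The only point demanding care is this collapse of exponents. In Theorem \ref{Th5.3} the necessary and sufficient exponents genuinely differ in general, and the gap closes here solely because the diagonal choice $q=p$, $\beta=\alpha+1$ forces the two arguments of the maximum to be equal and lets the conjugate-index identity $\tfrac1p+\tfrac1{p'}=1$ absorb the $1/p$ and $1/q'$ factors. Once this arithmetic is checked the argument is complete: there is no analytic content beyond Theorem \ref{Th5.3} itself.
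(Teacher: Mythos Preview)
Your proposal is correct and follows exactly the route the paper intends: the corollary is stated immediately after Corollary~5.5 (itself obtained from Theorem~\ref{Th5.3} via the identification $A_\gamma^p=\mathcal{D}_{\gamma+p}^p$), and your specialization $(\alpha_{\mathrm{thm}},p,q,\beta)=(\alpha+p,\,p,\,p,\,\alpha+1)$ together with the observation that both Carleson exponents collapse to $\alpha+2$ is precisely the computation that the paper leaves implicit.
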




\begin{thebibliography}{99}

\bibitem{BHM} Bao G, Wulan H. Hankel matrices acting on Dirichlet spaces, J  Math Anal Appl, 2014 \textbf{409}(1): 228-235.
\bibitem{DBS} Duren P L, Schuster A. Bergman spaces. American Mathematical Soc, 2004.
\bibitem{ZOT}Zhu K. Operator theory in function spaces, American Mathematical Soc, 2007.
\bibitem{CGH} Chatzifountas  C, Girela D, Pel\'{a}ez J \'{A}.
 A generalized Hilbert matrix acting on Hardy spaces, J  Math Anal Appl, 2014, \textbf{413}(1): 154-168.
\bibitem{CCO} Cowen Jr C C, MacCluer B I. Composition Operators on Spaces of Analytic
    Functions. CRC Press, 1995.
\bibitem{DCO} Diamantopoulos  E , Siskakis  A G. Composition operators and the Hilbert matrix,
 Studia Math, 2000, \textbf{140}(2): 191-198.

\bibitem{GGH} Galanopoulos  P , Girela  D, Pel\'{a}ez, J  \'{A} , Siskakis  A  G. Generalized
    Hilbert operators, Ann  Acad Sci Fenn  Math, 2014, \textbf{39}: 231-258.
\bibitem{GHM} Galanopoulos P , Pel\'{a}ez J \'{A}. A Hankel matrix acting on Hardy and Bergman
    spaces, Studia Math ,2010, \textbf{200}(3): 201-220.
\bibitem{GGHO} Girela  D , Merch\'{a}n  N. A generalized Hilbert operator acting on conformally
    invariant spaces, Banach J Math  Anal ,2018, \textbf{12}(2): 374-398.
    \bibitem{GHMA} Girela D, Merch\'{a}n N. Hankel matrices acting on the Hardy space $ H^ 1$  and on Dirichlet spaces, Revista Matem\'{a}tica Complutense, 2019, 32(3): 799-822.
\bibitem{GOB} Gnuschke-Hauschild D , Pommerenke C. On Bloch functions and gap series, J  Reine
    Angew Math, 1986, \textbf{367}: 172-186.
   \bibitem{JTC}Jevti\'{c} M, Vukoti\'{c} D, Arsenovi\'{c} M. Taylor coefficients and coefficient multipliers of Hardy and Bergman-type spaces, Springer International Publishing, 2016.
\bibitem{HCM} Hastings W W. A Carleson measure theorem for Bergman spaces, Proc Amer  Math
    Soc, 1975, \textbf{52}(1): 237-241.
\bibitem{LFR} Luecking D H, Forward and reverse Carleson inequalities for functions in Bergman spaces and their derivatives, American Journal of Mathematics, 1985, \textbf{107}(1): 85-111.
\bibitem{MVL} MacCluer B, Zhao R. Vanishing logarithmic Carleson measures, Illinois J  Math ,
    2002, \textbf{46}(2):507-518.
\bibitem{POB} Pommerenke C, Clunie J, Anderson J. On Bloch functions and normal functions, J
    Reine  Angew Math, 1974, \textbf{270}: 12-37.
\bibitem{ROM} Romberg B W, Duren P L, Shields A L. Linear functionals on Hp spaces with $0< p<
    1$,
  Reine Angew Math, 1969, \textbf{238}: 32-60.
\bibitem{YDBL}Ye S, Zhou Z. A derivative-Hilbert operator acting on the Bloch space,
  Complex Anal Oper Theory, 2021, \textbf{15}(5): 88.
\bibitem{YDBE}Ye S, Zhou Z. A derivative-Hilbert operator acting on Bergman spaces,
J  Math  Anal  Appl, 2022, \textbf{506}(1): 125553.
\bibitem{YGH} Ye S, Zhou, Z. Generalized Hilbert Operator Acting on Bloch Type Spaces, Acta Mathematic Sinica, Chinese Series (In Press), 2022.
\bibitem{ZOL} Zhao  R. On logarithmic Carleson measures, Acta Sci  Math, 2003, \textbf{69}:
    605-618.
\bibitem{ZBT} Zhu  K. Bloch Type Spaces of Analytic Functions, Rocky Mountain J Math, 1993,
    \textbf{23}(3): 1143-1177.



\end{thebibliography}
\end{document}